\newif\ifdebug                                                      %
\newcommand{\printname}[1]
   {\smash{\makebox[0pt]{\hspace{-.0in}\raisebox{8pt}{\tiny #1}}}}
\newcommand{\labell}[1] {\ifdebug {\label{#1}\printname{}\printname{#1}}
                        \else    {\label{#1}} \fi}
\def \ol {\overline}
\def \bm {\mathbf}
\numberwithin{equation}{section}
\newtheorem{Theorem}[equation]{Theorem}
\newtheorem{prop}[equation]{Proposition}
\newtheorem{Proposition}[equation]{Proposition}
\newtheorem{lemma}[equation]{Lemma}
\newtheorem*{lemma*}{Lemma}
\newtheorem{Lemma}[equation]{Lemma}
\newtheorem{Corollary}[equation]{Corollary}
\theoremstyle{definition}
\newtheorem{Definition}[equation]{Definition}
\newtheorem{Construction}[equation]{Construction}
\theoremstyle{remark}
\newtheorem{Remark}[equation]{Remark}
\newtheorem*{Remark*}{Remark}
\newtheorem{rmk}[equation]{Remark}
\newtheorem*{rmk*}{Remark}
\setlist{topsep=0pt,itemsep=6pt}
\def \PP {{\mathbb P}}
\def \N {{\mathbb N}}
\def \Z {{\mathbb Z}}
\def \Q {{\mathbb Q}}
\def \R {{\mathbb R}}
\def \C {{\mathbb C}}
\def \t {{\mathfrak t}}
\def \b {{\mathfrak b}}
\def \g {{\mathfrak g}}
\def \h {{\mathfrak h}}
\def \Cx {{\C^{\times}}}
\def \vell {\vec{\ell}\,}
\def \ssminus {\smallsetminus}
\def \frakX {{\mathfrak X}}
\def \frakL {{\mathfrak L}}
\def \calO {{\mathcal O}}
\def \bfone {\mathbf{1}}
\def \bfzero {\mathbf{0}}
\def \bft {{\mathbf{t}}}
\def \calN {{\mathcal N}}
\def \on {\operatorname}
\DeclareMathOperator \Lie {Lie}
\def \Gr {\on{Gr}}
\def \St {\on{St}}
\def \GL {\on{GL}}
\def \Span {\on{Span}}
\def \pr {\on{pr}}
\def \SL {\operatorname{SL}}
\def \SU {\operatorname{SU}}
\def \hol {{\operatorname{hol}}}
\def \graded {{\operatorname{graded}}}
\def \reg {{\operatorname{reg}}}
\def \Xreg {\frakX_\reg}
\def \inv {^{-1}}
\def \calS {{\mathcal S}}
\def \wt {\widetilde}
\def \wh {\widehat}
\begin{document}

\title{Bott canonical basis?}

\author{Yael Karshon and Jihyeon Jessie Yang}
\address{University of Toronto Mississauga}
\email{karshon@math.toronto.edu}
%
\email{jessie.yang@utoronto.ca}


\begin{abstract} 
Expanding an idea of Raoul Bott,
we propose a construction of canonical bases 
for unitary representations 
that comes from big torus actions on families of Bott-Samelson manifolds.
The construction depends only on the choices of a 
maximal torus, a Borel subgroup,
and a reduced expression for the longest element of the Weyl group.
It relies on a conjectural vanishing of higher cohomology 
of sheaves of holomorphic sections of certain line bundles
on the total spaces of the families, hence the question mark in the title. 
\end{abstract}

\thanks{{\it 2020 Mathematics Subject Classification.} 22E46.}

\keywords{Reductive Lie group, compact Lie group, canonical basis,
equivariant family, test configuration, Borel-Weil, Bott-Samelson}

\maketitle

\setcounter{tocdepth}{1}
\tableofcontents

\section{Overview}
\labell{sec:overview}

Let $K$ be a compact connected Lie group.  
Fix a maximal torus $T$ in $K$.
We identify the Lie algebra of $S^1$ with $\R$
such that the exponential map becomes $t \mapsto e^{it}$.
Homomorphisms from $T$ to $S^1$ are determined by their differentials
at the identity; these differentials form the weight lattice\footnote{
Some authors reserve the term ``weight lattice''
to the case that $K$ is simply connected;
when studying representations of the Lie algebra, only this case matters.
}
$\t^*_\Z$
in the dual $\t^*$ of the Lie algebra $\t$ of $T$.
For a weight $\mu \in \t^*_\Z$,
we denote the corresponding homomorphism by $a \mapsto a^\mu$,
and we denote by $\C_\mu$ the representation $a \colon z \mapsto a^{\mu}z$
of $T$ on the vector space $\C$.

Let $V$ be an irreducible unitary representation of $K$.
For every weight $\lambda \in \t^*_\Z$, the corresponding weight space
consists of those vectors on which each torus element $a \in T$
acts as scalar multiplication by $a^\lambda$.
The space $V$ decomposes as a direct sum of the weight spaces.
If $K = \SU(2)$, the non-trivial weight spaces are one dimensional.
For other Lie groups, the weight spaces might be higher dimensional.
By a \textbf{canonical basis} we refer to a further splitting 
of the weight spaces
into one dimensional spaces that depends on as few choices as possible.\footnote
{ 
In the literature, \emph{canonical basis} is a basis of  a quantum version $U_q(\mathfrak{g})$ of a universal enveloping algebra.
This involves a parameter $q$.
The ordinary universal enveloping algebra corresponds to $q=1$.
\emph{Crystal bases} of representations of $U_q(\mathfrak{g})$ occur at $q=0$.
See \cite{hong-kang, kashiwara1, kashiwara2, kashiwara3, lusztig1, lusztig2}.  It would be interesting to explore analogies 
with one-parameter versions 
of our multi-parameter equivariant family. 
}

Let $G$ be the complexification of $K$;
it is a connected reductive\footnote{
If $K$ is simply connected, then $G$ is semisimple.
}
complex Lie group that contains $K$
and whose Lie algebra is the complexification of the Lie algebra of $K$.
Passing from $K$ to $G$ gives a bijection
from the set of compact connected Lie groups modulo isomorphism
to the set of connected reductive complex Lie groups modulo isomorphism.
Every irreducible unitary representation of the real group $K$
uniquely extends to an irreducible representation of the complex group $G$;
this gives a bijection from the set of irreducible unitary representations
of $K$ modulo isomorphism to the set of irreducible complex representations
of $G$ modulo isomorphism.

Let $\Delta \subset \t^*_\Z$ be the set of roots of $(K,T)$.
Consider the root space decomposition
$$\g = \h \oplus \bigoplus_{\alpha \in \Delta} \g_{\alpha} .$$
Here $\h$ is the Lie algebra of the Cartan subgroup $H$,
which, in turn, is the complexification of the maximal torus $T$.
Choose a Borel subgroup $B$ of $G$ that contains~$T$.
The choice of $B$ determines 
the set of positive roots $\Delta_+$ in $\Delta$,
the positive Weyl chamber $\t^*_+$ in $\t^*$,
the Bruhat (partial) order on $\t^*_\Z$, 
and the set $\{ \alpha_1, \ldots, \alpha_r \}$ 
of simple positive roots.\footnote{
The $\alpha_i$ span $\t^*$ if and only if $G$ is semisimple.
}
The Borel subgroup  $B$ is the connected subgroup of $G$
whose Lie algebra 
is $\h \oplus \bigoplus_{\alpha \in \Delta_+} \g_{\alpha}$.
The maximal unipotent subgroup of $B$ is the connected Lie subgroup $U$
whose Lie algebra is $\bigoplus_{\alpha \in \Delta_+} \g_{\alpha}$.
The subgroup $U$ is normal in $B$.
The multiplication map 
$(h,u) \mapsto hu$ identifies $B$ with the semi-direct product $H \ltimes U$,
so we have the projection with kernel $U$
$$
 \psi_H \colon B \to H \quad , \quad hu \mapsto h
$$
from the Borel subgroup $B$ to the Cartan subgroup $H$.
For each weight $\mu$ of $T$,
the corresponding homomorphism $a \mapsto a^\mu$ from $T$ to $S^1$ 
extends to a unique holomorphic homomorphism from $B$ to $\Cx$
that is trivial on $U$; we denote it by $b \mapsto b^\mu$.
Thus, $b^\mu = (\psi_H(b))^\mu$.
We let $B$ acts on $\C_\mu$ through this homomorphism.
Thus, for $b \in B$ and $z \in \C_\mu$, 
we write $b \colon z \mapsto b^\mu z$.

The Borel-Weil theorem gives a geometric construction
of irreducible representations of $K$
as spaces of holomorphic sections of $K$-equivariant holomorphic line bundles.
We now recall this construction.
Consider the holomorphic principal $B$ bundle $G \to G/B$
and the associated holomorphic line bundle
$L_\lambda = G \times_B \C_{-\lambda}$.
Namely, $L_\lambda$ is the quotient of $G \times \C_{-\lambda}$
by the anti-diagonal (right) $B$ action 
$b \colon (g,z) \mapsto (gb, b^\lambda z)$.
The space $G/B$, being a quotient of a complex manifold
by a free and proper holomorphic group action of a complex group,
is a complex manifold;
similarly, $L_\lambda$ is a holomorphic line bundle.
Left multiplication gives an action of $G$ on $L_\lambda$,
inducing a linear representation of $G$
on the vector space $\Gamma_\hol( G/B , L_\lambda )$ 
of holomorphic sections of $L_{\lambda}$.
By the Borel-Weil theorem,
the construction $\lambda \mapsto \Gamma_\hol( G/B , L_\lambda )$ 
gives a bijection from the set $\t^*_\Z \cap \t^*_+$ 
of \textbf{dominant weights}
to the set of irreducible $G$ representations modulo isomorphism,
under which the dominant weight $\lambda$ corresponds to the
(unique up to isomorphism) irreducible $G$ representation $V_\lambda$
with highest weight $\lambda$.

Raoul Bott had hoped to obtain a canonical basis
from a big torus action, by applying the following general argument.\footnote
{Raoul\ Bott, unpublished letter to M.\ F.\ Atiyah (1989). See the
introduction of \cite{grossberg:thesis}.} 
Let $M$ be a connected complex manifold, with a holomorphic action of a torus.
Let $L \to M$ be a holomorphic line bundle,
with a lifting of the action to a holomorphic action on $L$.
Then the space of holomorphic sections of $L$
decomposes into weight spaces for the torus action.
If $M$ is connected, the action has a fixed point,
and the dimension of the torus is equal to half the real dimension of $M$, 
then this gives a decomposition of the space of holomorphic sections
into one dimensional spaces.
The proof is as follows.\footnote{Yael Karshon
learned this argument from Michael Grossberg, who learned it from Raoul Bott.}
Fix a fixed point (no pun intended), $p$.
Let $\mu$ be the weight by which the torus acts on the fibre of $L$ 
above~$p$.  Fix a trivialization of the bundle $L$
over a neighbourhood of $p$,
a holomorphic local chart on a possible smaller neighbourhood of $p$,
and an identification of the torus with $(S^1)^n$,
such that near $p$ the torus action on $L$
becomes the action of $(S^1)^n$ on $D \times \C_\mu$
where $D$ is a neighbourhood of the origin in $\C^n$
and $(S^1)^n$ acts by coordinatewise multiplication on $D$
and by the weight $\mu$ on $\C_\mu$.
The restriction map to this neighbourhood,
composed with the coordinate chart and with the trivialization,
gives an equivariant linear map from the space of holomorphic sections
$M \to L$
to the space of holomorphic functions $D \to \C_\mu$.
By analytic continuation, this restriction map is one to one.
But on the space of holomorphic functions $D \to \C_\mu$,
every weight space is spanned by a monomial, hence is one dimensional.

In the Borel-Weil setup,
the dimension of the torus $T$ is generally less than half 
the real dimension of the base manifold $G/B$,
which is not big enough for applying the above argument
to the left multiplication action of $T$ on the holomorphic line bundle
$L_\lambda = G \times_B \C_{-\lambda}$.
But if we only want to keep track of the restrictions to $T$ (or to $B$)
of irreducible representations of~$G$,
it turns out \cite[\S 5.1]{demazure} that 
instead of working with the flag manifold $G/B$, we can work with the 
Bott-Samelson manifold, which we will now describe.
And the Bott-Samelson manifold
does admit an action of a torus of the right dimension.
Bott had hoped to use this torus action on the Bott-Samelson manifold
to obtain a canonical basis.

The Bott-Samelson manifold, introduced by Raoul Bott and Hans Samelson 
in \cite{bott-samelson-1,bott-samelson-2}, can be constructed 
in the following way.  
Let $\{\alpha_1,\ldots,\alpha_r\}$ be the set of simple positive roots.
For each simple positive root $\alpha_i$, let $P_{\alpha_i}$ be 
the corresponding minimal parabolic subgroup of $G$.
The intersection $K_{\alpha_i} := K \cap P_{\alpha_i}$
is a maximal compact subgroup of $P_{\alpha_i}$,
and the quotient $K_{\alpha_i}/T$ is diffeomorphic to a 2-sphere.
Let $T_{\alpha_i}$ be the centre of $K_{\alpha_i}$;
it is a codimension one subtorus of $T$.
Let $\alpha_{i_1}, \ldots, \alpha_{i_n}$ be a finite sequence
of simple positive roots. 
The corresponding Bott-Samelson manifold, 
$$ Y_{\alpha_{i_1},\ldots,\alpha_{i_n}} :=
K_{\alpha_{i_1}} \times_T K_{\alpha_{i_2}} \times_T \cdots 
                   \times_T K_{\alpha_{i_n}}/T ,$$
is the quotient of $K_{\alpha_{i_1}} \times \cdots \times K_{\alpha_{i_n}}$
by the right\footnote{For an abelian group, a left action
is the same thing as a right action.
We write this action as a right action because of its relation
with the action of $B \times \cdots \times B$ that we describe below.} 
action of $T \times \cdots \times T$ that is given by
$ (k_1, k_2, \ldots, k_n) \cdot (a_1,\ldots,a_n) 
 = (k_1 a_1, a_1\inv k_2 a_2, \ldots , a_{n-1}\inv k_n a_n)$.
Successively truncating the last factors, we obtain a tower of bundles
\begin{equation*} 
 Y_{\alpha_{i_1},\ldots,\alpha_{i_n}}
 \to Y_{\alpha_{i_1},\ldots,\alpha_{i_{n-1}}}
 \to \ldots \to  Y_{\alpha_{i_1},\alpha_{i_2}} \to Y_{\alpha_{i_1}}
\end{equation*}
whose fibres are two-spheres.
We have a left action of $T \times \cdots \times T$ 
on the Bott-Samelson manifold,
given by
$(a_1,\ldots,a_n) \cdot [k_1,\ldots,k_n] = [a_1 k_1,\ldots,a_n k_n]$.
This action is not faithful.  
But it descends to a faithful action of the \textbf{Bott-Samelson torus},
$ T \times_{T_{\alpha_{i_1}}} T \times_{T_{\alpha_{i_2}} } \cdots 
  \times_{T_{\alpha_{i_{n-1}}} } T / T_{\alpha_{i_n}}$,
whose dimension, $n$, is equal to half the dimension 
of the Bott-Samelson manifold.
The multiplication map $[k_1,\ldots,k_n] \mapsto k_1 \cdots k_n T$
is a $T$ equivariant smooth map 
from the Bott-Samelson manifold 
$Y_{\alpha_{i_1},\dots, \alpha_{i_n}}$ to~$K/T$,
which has degree one 
if the sequence of roots $\alpha_{i_1}, \ldots , \alpha_{i_n}$
corresponds to a reduced expression of the longest element 
of the Weyl group~\cite{bott-samelson-1}.

The relevance of the Bott-Samelson manifold to representation theory
is obtained from a complex geometric construction of this manifold,
which  is due to Hansen \cite{hansen} and Demazure~\cite{demazure};
also see Jantzen \cite[Ch.~13--14]{jantzen}.
We now describe this complex Bott-Samelson manifold. 
We begin by recalling that the manifold and line bundle
of the Borel-Weil theorem have both a compact construction
and a complex construction.  The complex construction
consists of the manifold $G/B$
and the line bundle 
$$ L_\lambda := G \times_B \C_{-\lambda},$$
which we already described.
The compact construction consists of the manifold $K/T$
and the line bundle 
$$ L_\lambda^K := K \times_T \C_{-\lambda},$$
which is the quotient of $K \times \C_{-\lambda}$
by the $T$ action $a \colon (g,z) \mapsto (ga,a^\lambda z)$.
The group $K$ acts on $L_\lambda^K$ by left multiplication.
The inclusion map $K \to G$
induces a diffeomorphism $K/T \to G/B$
and an isomorphism of $K$-equivariant complex line bundles
$L_\lambda^K \to L_\lambda$.
Similarly, we have both a compact construction and a complex construction
for the Bott-Samelson manifolds.  We already described the compact construction.
For the complex construction, take
\begin{equation} \labell{Zalpha}
 Z_{\alpha_{i_1} , \ldots , \alpha_{i_n}}
 := P_{\alpha_{i_1}} \times_B P_{\alpha_{i_2}} \times_B \cdots 
                      \times_B P_{\alpha_{i_n}}/B ,
\end{equation}
with the holomorphic line bundle
\begin{equation} \labell{L lambda Z}
 L^Z_\lambda := P_{\alpha_{i_1}} \times_B P_{\alpha_{i_2}} \times_B \cdots 
                               \times_B P_{\alpha_{i_n}}
\times_B \C_{-\lambda},
\end{equation} 
obtained as the quotient of 
$P_{\alpha_{i_1}} \times \cdots \times P_{\alpha_{i_n}} \times \C_{-\lambda}$
by the right action of $B \times \cdots \times B$ that is given by
$ (p_1, \, p_2, \, \ldots, \, p_n; z) \cdot (b_1, \, \ldots, \, b_n)  
 = (p_1 b_1, \, b_1\inv p_2 b_2, \, \ldots , \, b_{n-1}\inv p_n b_n ; 
    b_n^\lambda z) \, $.
The Borel subgroup $B$ acts by left multiplication on the first factor.
These fit into the following commuting diagram
(which we don't bother completing to a cube)
in which the squares are pullback diagrams,
the vertical arrows are complex line bundles,
the front square consists of $B$-equivariant holomorphic maps,
the arrows that point from the back to the front
are $T$-equivariant diffeomorphisms induced from inclusion maps,
and the arrows that point from the left to the right
are induced from multiplication maps.
$$ \xymatrix{
 & & & L_\lambda^K \ar[ld] \ar[dd] \\
 L_\lambda^Z \ar[rr] \ar[dd] && L_\lambda \ar[dd] & \\ 
 & Y_{\alpha_{i_1},\ldots,\alpha_{i_n}} \ar'[r][rr] \ar[dl] && K/T \ar[ld] \\
 Z_{\alpha_{i_1},\ldots,\alpha_{i_n}} \ar[rr] && G/B
}$$
By pulling back, 
the front square gives a $B$ equivariant map
from the space of holomorphic sections of $L_\lambda$
to the space of holomorphic sections of $L^Z_\lambda$.
If the sequence of roots $\alpha_{i_1}, \ldots , \alpha_{i_n}$
corresponds to a reduced expression of the longest element 
of the Weyl group, then this map
is an isomorphism of $B$ representations;
the space of holomorphic sections of $L^Z_\lambda$
then provides a model for the irreducible representation $V_\lambda$
with highest weight $\lambda$,
with the $G$ action restricted to a $B$ action.
See \cite{demazure,hansen,jantzen}.

Raoul Bott had hoped to use the Bott-Samelson torus action,
combined with the realization of $V_\lambda$
as the space of holomorphic sections of a line bundle 
over the Bott-Samelson manifold, 
to obtain a canonical basis for~$V_\lambda$.
Unfortunately, this beautiful idea did not quite work.
This is because the action of the Bott-Samelson torus
$T \times_{T_{\alpha_{i_1}}} \times_{T_{\alpha_{i_2}}} T \cdots
 \times_{T_{\alpha_{i_{n-1}}}} T / T_{\alpha_{i_n}}$
on the Bott-Samelson manifold is not holomorphic.
To make this action holomorphic,
Michael Grossberg deformed the complex structure.
The resulting toric variety, which Grossberg called a \emph{Bott tower}
\cite{grossberg:thesis,grossberg-karshon}, is the quotient 
\begin{equation} \labell{Xalpha}
 X_{\alpha_{i_1},\ldots,\alpha_{i_n}} 
 := (P_{\alpha_{i_1}} \times \cdots \times P_{\alpha_{i_n}} ) / B^n 
\end{equation}
by the right $B^n$ action that is given by
$$
 (p_1,p_2,\ldots,p_n) \cdot (b_1,\ldots,b_n) = 
(p_1 b_1 \, , \, \psi_H(b_1)\inv p_2 b_2 \, , \, \ldots \, , \, 
   \psi_H(b_{n-1})\inv p_n b_n),
$$
where $\psi_H \colon B \to H$ is the projection map
from the Borel to the Cartan.
Over the Bott tower we have the holomorphic line bundle
\begin{equation} \labell{L lambda X} 
 L_\lambda^X := (P_{\alpha_{i_1}} \times \cdots 
   \times P_{\alpha_{i_n}} ) \times_{B^n} \C_{-\lambda} ,
\end{equation}
which is the quotient of 
$P_{\alpha_{i_1}} \times \cdots \times P_{\alpha_{i_n}} \times \C_{-\lambda}$
by the right action of $B \times \cdots \times B$ that is given by
$(p_1,p_2,\ldots,p_n,z) \cdot (b_1,b_2,\ldots,b_n) = 
 ( p_1 b_1, \psi_H(b_1)^{-1} p_2 b_2, \ldots, 
   \psi_H(b_{n-1})^{-1} p_n b_n, b_n^\lambda z )$. 

On the Bott tower $X_{\alpha_{i_1},\ldots,\alpha_{i_n}}$,
we have a left action of $T \times \cdots \times T$ given by 
\begin{equation} \labell{left action on bott tower}
(a_1,\, \ldots,\, a_n) \cdot [p_1,\, \ldots,\, p_n] 
 = [a_1 p_1,\, \ldots,\, a_n p_n],
\end{equation}
which is well defined and holomorphic,
and which descends to a holomorphic action of the Bott-Samelson torus
$T \times_{T_{\alpha_{i_1}}} \cdots \times_{T_{\alpha_{i_{n-1}}}} 
   T / T_{\alpha_{i_n}}$.
(In contrast, on the complex Bott Samleson manifold \eqref{Zalpha},
the formula~\eqref{left action on bott tower}
does not give a well defined action.)
This action lifts to the line bundle $L_\lambda^X$ by 
\begin{equation} \labell{left action on LX}
 (a_1,\, \ldots,\, a_n) \cdot [p_1,\, \ldots,\, p_n,\, z] 
  = [a_1 p_1,\, \ldots,\, a_n p_n,\, z] \,.
\end{equation}
The dimensions now are as in Bott's earlier argument.
Unfortunately, again this doesn't work.
The space of holomorphic sections over the Bott tower
does decompose into one dimensional weight spaces,
but this space of holomorphic sections
can no longer be used as a model for the representation~$V_\lambda$.
The Bott tower and the complex Bott-Samelson manifold
give two different complex structures
on the real Bott-Samelson manifold $Y_{\alpha_{i_1},\ldots,\alpha_{i_n}}$,
and these complex structures yield different spaces of holomorphic sections.

Michael Grossberg, in his PhD thesis \cite{grossberg:thesis},
carried out a thorough analysis of this situation;
his work, together with a presymplectic analogue,
later appeared in his joint paper~\cite{grossberg-karshon} with Karshon.
Specifically, Michael Grossberg obtained a one-parameter family
of complex structures on the Bott-Samelson manifold
by deforming the right action of $B \times \ldots \times B$
by which we take the quotient.
His family depended on a parameter in $[0,\infty]$,
with the parameter value~$0$ giving the complex Bott-Samelson manifold
and the parameter value $\infty$ giving the Bott tower. 

In connection with this work, around 1993, Joseph Bernstein 
suggested to Yael Karshon that in this situation --- a deformation 
of a complex manifold into one that admits a toric action --- it is 
worthwhile to try to fit the deformation
into a family of complex manifolds,
with the torus acting on the entire family. 
Torus elements generally take each fibre of the family 
to a different fibre of the family,
but over a fixed point in the base
we get a \emph{special fibre} on which the torus acts.

Pasquier, in~\cite{pasquier}, provided an algebraic geometric interpretation
of the work of Michael Grossberg.
He constructed a one-parameter family of varieties,
parametrized by $\C$,
such that generic fibres are Bott-Samelson manifolds
and the fibre over the origin $0 \in \C$ is the Bott tower.

Following Bernstein's idea, we sought --- and eventually found --- 
a torus-equivariant family
where the complex Bott-Samelson manifold occurs as a generic fibre 
and where the toric Bott tower occurs as a special fibre over a fixed point.
We describe this \emph{Bott-Samelson family} 
in Section~\ref{sec:Bott-Samelson}.
Our construction is an extension of Grossberg's deformation
into a deformation that depends on $n$ parameters.
We work in the complex analytic (rather than algebraic) setup. 
Our Bott-Samelson family is a family of complex manifolds,
parametrized by $\C^n$ where $n$ is the complex dimension of the 
Bott-Samelson manifold.  
The complex torus $(\Cx)^n$ acts holomorphically on the entire family, 
making the special fibre over the origin $0 \in \C^n$ into a toric variety.

In Section~\ref{sec:filtrations} we give a general setup
for how a family of this type
can give rise to a splitting into lines
of the space of holomorphic sections over the generic fibre.
In Section~\ref{sec:Bott-Samelson} we show how Bott-Samelson manifolds fit into this general setup.  In Section~\ref{sec:canonical-basis} we show that 
together with the results of Section~\ref{sec:filtrations},
under a conjectural ``vanishing of higher cohomology''
(which we hope is true!),
the construction in   Section~\ref{sec:Bott-Samelson} gives canonical bases (in an appropriate sense)
for irreducible representations of compact Lie groups.
In Section~\ref{sec:examples} we work out an example.
In this example, the elements of our basis
are indexed by a collection of lattice points
whose convex hull is upper-triangularly and unimodularly equivalent
to the corresponding string polytope;
see Remark~\ref{string polytope}.
In Appendices~\ref{app:hadamard} and~\ref{app:indep}
we prove results that we need 
for our proofs in Section~\ref{sec:filtrations}.
Specifically,
in Appendix~\ref{app:hadamard} we develop a ``holomorphic Hadamard Lemma''
 --- see Theorem~\ref{HA} -- which we find interesting in its own right.

\begin{figure}[ht]
{\centering}
\includegraphics[scale=0.6]{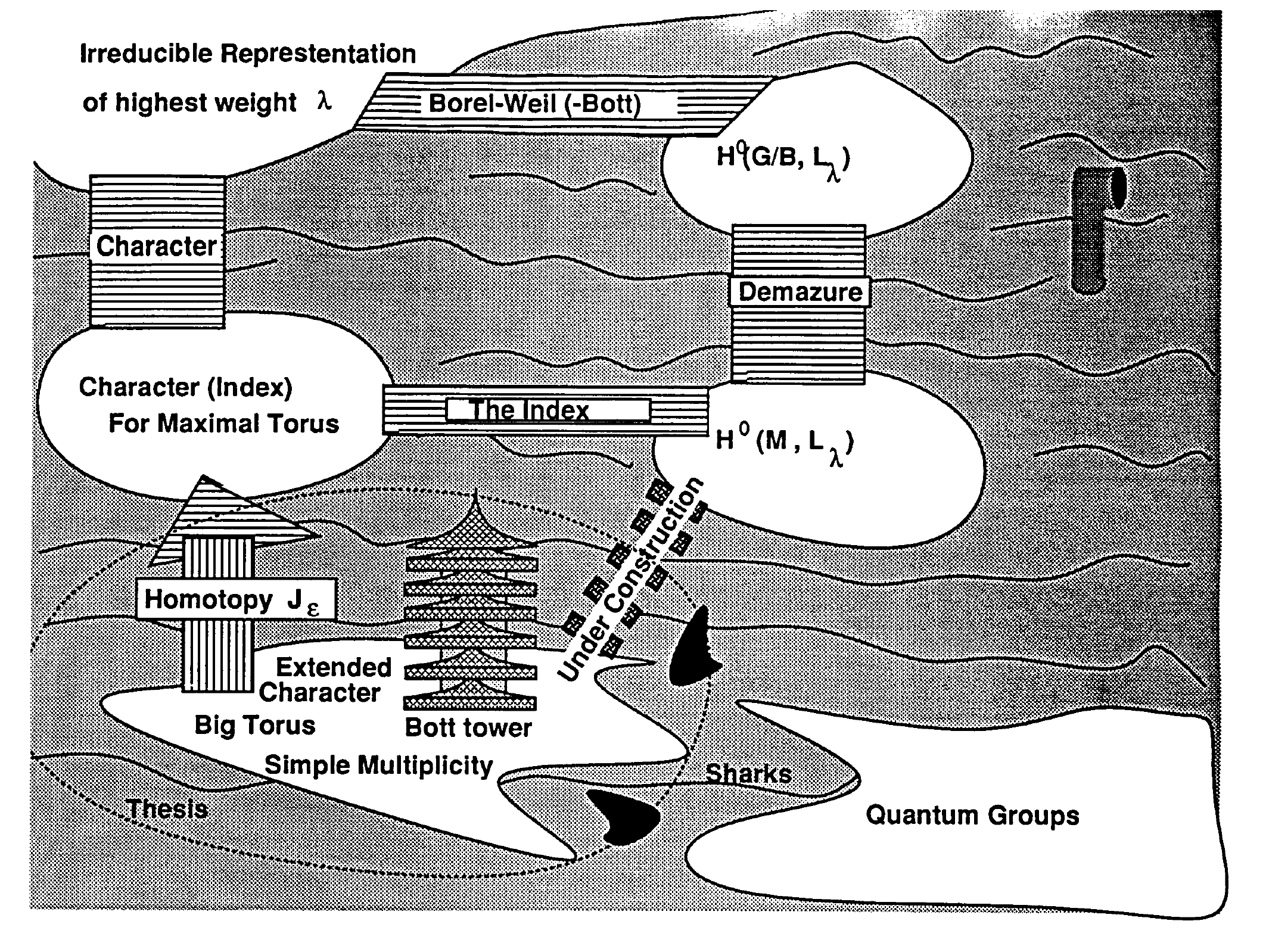}
\captionsetup{labelformat=empty}
\caption[Excerpt from Michael Grossberg's Ph.D.\ thesis]
{Excerpt from Michael Grossberg's Ph.D.\ thesis
\cite[p.~46]{grossberg:thesis},\footnotemark}
reproduced with permission.
\end{figure}
\footnotetext{Copyright 1991, Michael David Grossberg}

\section{Filtrations induced by equivariant families}
\labell{sec:filtrations}
In this section, we consider a complex analytic manifold $X$
equipped with a holomorphic line bundle $L$
that occurs as a generic fibre of an $(\Cx)^n$-equivariant family 
(defined below) $\frakL \to \frakX \to \C^n$.
Under an additional mild technical condition
(that is true in the setup of our Sections~\ref{sec:Bott-Samelson}
and~\ref{sec:canonical-basis}),
we use the family to obtain a filtration $(F_{\vell})$ of the space 
$$V := \Gamma_\hol(X,L)$$ 
of holomorphic sections of $L$ over $X$ that is indexed by the set $\Z^n$ 
with its product partial ordering: 
for $\vell=(\ell_1,\dots, \ell_n)$
and $\vell'=(\ell_1', \dots, \ell_n')$,
\begin{equation} \labell{product partial ordering}
    \vell\le \vell' \quad \text{ iff } \quad \ell_j\le \ell'_j
 \ \text{ for all } \ j=1, \dots, n  . 
\end{equation}

Assuming the vanishing of higher cohomology of the sheaf of holomorphic
sections of $\frakL \to \frakX$, we show that $V$ is isomorphic 
to the associated graded space $\bigoplus F_{\vell}/F_{>\vell}$.
If the special fibre of the family is toric, 
the ``leaves''\footnote{
We follow~\cite{kaveh-khovanskii}
in calling the graded pieces $F_{\vell}/F_{>\vell}$ ``leaves''.
}
 $F_{\vell}/F_{>\vell}$ are one dimensional,
and an identification of $V$ with the associated graded space
yields a decomposition of $V$ into one dimensional subspaces.
In the setup of our Sections~\ref{sec:Bott-Samelson}
and~\ref{sec:canonical-basis}, this identification can be made canonical,
namely, without any additional choices.

\subsection*{Equivariant families over $\bm{\C^n}$}\ 

In this paper, an \textbf{equivariant family over $\C^n$}
refers to the following data.

\begin{enumerate}[noitemsep,topsep=0pt]
\item
A connected complex manifold $\frakX$, with a holomorphic $(\Cx)^n$-action.
\item A $(\mathbb{C}^{\times})^n$-equivariant holomorphic proper submersion
$\pi\colon \frakX \to \C^n$,
where $(\mathbb{C}^{\times})^n$ acts on $\mathbb{C}^n$ by
coordinatewise multiplication.
\item
A $(\Cx)^n$-equivariant holomorphic line bundle $\frakL$ over $\frakX$.
\end{enumerate}

An \textbf{action} of a Lie group $H$ on the equivariant family 
is a holomorphic\footnote{for a real Lie group, we require each element
of $H$ to act holomorphically; for a complex Lie group,
we require the action map $H \times \frakL \to \frakL$ to be holomorphic.}
action of $H$ on $\frakX$ and $\frakL$
that commutes with the $(\Cx)^n$ action
and such that the maps $\frakL \to \frakX \to \C^n$ are $H$ equivariant,
where $H$ acts on $\C^n$ trivially.

Let $\bfzero := (0,\ldots,0)$ be the origin in $\C^n$ 
and $X_\bfzero := \pi^{-1}( \{ \bfzero \} )$ the special fibre in $\frakX$.
Then $(\Cx)^n$ acts on $X_\bfzero$.
Later we will assume the following additional 
\textbf{mild technical condition}:
\begin{equation} \labell{mild technical}
\begin{minipage}{4.5in}
For every $j \in \{ 1,\ldots,n \}$,
the action of the $j$th factor $(\Cx)_{j^{\text{th}}}$ of $(\Cx)^n$
on the special fibre $X_{\bfzero}$ 
has a fixed point at which all the isotropy weights are non-positive.
\end{minipage}
\end{equation}

The following remark is a consequence of Ehresmann's lemma.
We use it in the proof of Proposition \ref{prop:1}.

\begin{rmk}\labell{rk:connected}
Given an equivariant family $\frakL \to \frakX \to \C^n$,
there exists an $(S^1)^n$-equivariant diffeomorphism
of $\frakX$ with $X_{\bfzero} \times \C^n$
that takes $X_{\bft} := \pi^{-1}(\{\bft\})$ 
onto $X_{\bfzero} \times \{ \bft \}$,
which gives a family of complex structures on $X_\bfzero$ 
that depend smoothly on the parameter~$\bft$.
This has the following consequences.
\begin{enumerate}
\item[(a)]
(Since $\frakX$ is connected,) for every $\bft \in \C^n$
the fibre $X_\bft$ is connected,
and for every $j \in \{ 1, \ldots, n \}$ the preimage 
$\pi^{-1}(\Cx \times \ldots \times \Cx \times \{ 0 \}_{j^{\text{th}}} 
 \times \Cx \times \ldots \times \Cx)$ is connected.
\item[(b)]
For any $j \in \{1,\ldots,n\}$ and $\bft \in 
\C \times \cdots \times \C \times \{0\}_{j^{\text{th}}} 
\times \C \times \cdots \times \C$,
the mild technical condition holds for $X_\bfzero$
if and only if the analogous condition holds for $X_\bft$:

\begin{quotation}
The action of the $j^{\text{th}}$ factor 
$(\Cx)_{j^{\text{th}}}$ of $(\Cx)^n$
on the fibre $X_\bft$ 
has a fixed point at which all the isotropy weights are non-positive.
\end{quotation}

\end{enumerate}
\end{rmk}

\begin{rmk} \labell{technical condition projective}
The mild technical condition~\eqref{mild technical}
holds whenever $X_\bfzero$ is compact and K\"ahler
and the $(S^1)^n$ action is Hamiltonian.
Indeed, as a consequence of the local normal form
for Hamiltonian torus actions,
a maximum point for the $j$th component of the momentum map
(or a minimum point, depending on the sign conventions)
is a fixed point (for $(S^1)_{\text{$j$th }}$, hence) for $(\Cx)_\text{$j$th}$
with non-positive isotropy weights.
In particular, Condition~\eqref{mild technical} holds 
if there is a projective embedding of $X_\bfzero$
such that the $(S^1)^n$ action is induced
from an action on the ambient projective space.
\end{rmk}

\begin{rmk}
In the case of $n=1$ and in an algebraic setting,
the notion of an equivariant family over $\C^n$
becomes the notion of a \emph{test configuration},
which was introduced by Donaldson in 2002~\cite{donaldson}.
The idea to use an equivariant family
to obtain a filtration of the space of holomorphic sections 
$\Gamma_\hol(X,L)$ of the line bundle $L$ over $X$
is also inspired by the work of Witt Nystrom
with test configurations in \cite{witt-nystrom}.
\end{rmk} 

\subsection*{The vanishing of higher cohomology assumption}\ 

We will describe how to use an equivariant family 
$\frakL \to \frakX \to \C^n$
to obtain a filtration of the space of holomorphic sections
$V := \Gamma_{\hol}( X_\bfone , \frakL|_{X_\bfone} )$ where
$$ X_{\bfone} := \pi\inv(\bfone) \, 
   \quad \text{ with } \quad \bfone := (1,\ldots,1).$$
The filtration $\{ F_{\vell} \}$ will be indexed by $n$-tuples of integers
with the product partial ordering ~\eqref{product partial ordering}.
In order to identify the associated graded space
$\oplus_{\vell \in \Z^n} \, F_{\vell}/F_{>\vell}$\,  with $V$,
we will need to assume the \textbf{vanishing of higher cohomology}
of the sheaf $\calO_\frakL$ of holomorphic sections of $\frakL$ over~$\frakX$:
$$ H^{\geq 1} (\frakX,\calO_\frakL) = \{ 0 \} .$$

The case that interests us is that of Bott-Samelson families
with line bundles that come from dominant weights.
We do not know if the vanishing of higher cohomology holds in this case;
we hope that it does.

\subsection*{``Sweep, twist, extend''}\ 

Fix an equivariant family $\frakL \to \frakX \to \C^n$
with an $H$-action.
The $(\Cx)^n$ action on the family
induces a $(\Cx)^n$ action on the space of holomorphic sections,
as well as on the space of holomorphic sections
over any $(\Cx)^n$-invariant open subset of $\frakX$.

Consider the space 
$ \Gamma_\hol 
   \left( \frakX_\reg , \frakL|_{\frakX_\reg} \right) ^{ (\Cx)^n } $
of $(\Cx)^n$-invariant holomorphic sections of $\frakL$ 
over the open dense subset 
\[ \Xreg := \pi^{-1} ( (\Cx)^n ) \quad \] 
of $\frakX$.
The restriction map
$$ \Gamma_\hol( \frakX_\reg , \frakL|_{\frakX_\reg} ) ^{ (\Cx)^n }
 \, \to \, \Gamma_\hol \left( X_\bfone , \frakL|_{X_\bfone} \right)
\ , \qquad \sigma \mapsto \sigma|_{X_{\bfone}} $$
is an ($H$-equivariant) linear isomorphism,
with inverse given by 
$$ \Gamma_\hol \left( X_\bfone , \frakL|_{X_\bfone} \right)
 \, \to \, 
   \Gamma_\hol( \frakX_\reg , \frakL|_{\frakX_\reg} ) ^{ (\Cx)^n }
\ , \qquad s \mapsto \tilde{s},  $$
where
$$\tilde{s}(x) := \tau \cdot s( \tau^{-1} \cdot x)
\qquad \text{ for } \quad \tau = \pi(x).$$
We say that $\tilde{s}$ is the \textbf{sweep} of $s$ by the $(\Cx)^n$-action.

The restriction map 
$$ \Gamma_\hol \left( \frakX , \frakL \right)
   \to \Gamma_\hol \left( \frakX_\reg , \frakL|_{\frakX_\reg} \right)$$
is $H \times (\Cx)^n $-equivariant and one-to-one (but not onto;
see Proposition~\ref{p:lsj}).
Denote its inverse 
(which is defined on the image of the restriction map) by 
$$ \sigma \mapsto \ol{\sigma} .$$
Thus, $\ol{\sigma}$ is the (unique continuous) extension 
of $\sigma$ to $\frakX$.
The graph of $\ol{\sigma}$ is the closure in $\frakL$ of the graph of $\sigma$.

For any lattice vector $\vec{\ell} = (\ell_1,\ldots,\ell_n)$ in $\Z^n$, 
we denote by $\mathbf{t}^{-\vell}$ the meromorphic function on~$\frakX$ 
that is given by 
\[
 \mathbf{t}^{-\vell}(x) := t_1^{-\ell_1}\cdots t_n^{-\ell_n}
 \quad \text{ when } \ \pi(x)=(t_1,\dots, t_n) .
\]
The corresponding \textbf{twist} of a holomorphic section $\sigma$ 
over $\frakX_\reg$
is the section $\bft^{-\vell} \sigma$ over $\frakX_\reg$.

Given a section $s \in \Gamma_{\hol} (X_\bfone, ,\frakL|_{X_{\bfone}})$,
its sweep $\tilde{s}$ might not extend to $\frakX$,
but in Propositions~\ref{prop:all j} and \ref{prop:1} we show that 
after an appropriate twist we obtain a section that does extend to~$\frakX$.

\subsection*{The filtration}\ 

As before, fix an equivariant family $\frakL \to \frakX \to \C^n$
with an $H$-action.
For each  $\vell \in \Z^n$, we consider the ($H$ invariant) space 
of those sections whose ``sweep and twist'' extends to $\frakX$:
\[ 
   F_{\vec{\ell}} := 
   \{ s \in \Gamma_\hol(X_\bfone,\frakL|_{X_\bfone}) \ | \ \,
 \mathbf{t}^{-\vec{\ell}}\tilde{s}
\text{ extends to a holomorphic section of $\frakL \to \frakX$ } \}.
\]
The spaces $F_{\vec{\ell}}$ form a decreasing filtration 
of  $\Gamma_\hol(X_\bfone,\frakL|_{X_\bfone})$ 
with respect to the product partial ordering~\eqref{product partial ordering},
in the following sense:
$$ \text{ If } \ \vell \leq \vell',  
   \ \text{ then } \ F_{\vell} \supseteq F_{\vell'}.$$
(Indeed, let $s \in F_{\vell'}$, and let $\vell \leq \vell'$.
Then $\bft^{-\vell'} \tilde{s}$ extends to $\frakX$,
and $\bft^{\vell'-\vell}$ is holomorphic on $\frakX$.
So their product $\bft^{-\vell} \tilde{s}$ extends to $\frakX$,
and so $s \in F_{\vell}$.)  
In particular,
\[ 
    F_{>\vell} \, := \,
    \text{span} \bigcup_{\vell'>\vell} F_{\vell'} \] 
is a subspace of $F_{\vell}$.

\begin{Remark}
From the $(\Cx)^n$ action, we get the weight spaces\footnote
{
The action of an element $\tau \in (\Cx)^n$ 
on a section $\sigma \colon \frakX \to \frakL$ 
is $(\tau \cdot \sigma)(x) := \tau \cdot (\sigma(\tau^{-1} \cdot x ))$,
where on the right hand side $\tau^{-1}$ acts on $\frakX$
and $\tau$ acts on $\frakL$.
The $\vell$ weight space for the $(\Cx)^n$ action on the space of sections 
is then
$ \Gamma_\hol(\frakX,\frakL)_{\vell} := 
 \{ \sigma \in \Gamma_\hol(\frakX,\frakL) \ | \ 
      \tau \cdot \sigma = \tau^{\vell}\sigma
\text{ for all } \tau \in (\Cx)^n \}$;
here $\tau \cdot \sigma$ refers to the action on the sections,
and $\tau^{\vell}\sigma$ refers to the multiplication 
by the scalar $\tau^{\vell}$.
}
\[
\Gamma_\hol(\frakX,\frakL)_{\vell} \quad , \quad \text{ for }
   \vell = (\ell_1,\ldots,\ell_n) \in \Z^n \, .
\]
The image of the restriction map
$ \Gamma_\hol \left( \frakX , \frakL \right)_{\vell}
\ \to \ \Gamma_\hol \left( X_\bfone, \frakL|_{X_\bfone} \right) $
is the space $F_{\vell}$.
The inverse of this restriction map is the ``sweep, twist, extend'' map,
$$ F_{\vell} \xrightarrow{\cong} 
   \Gamma_\hol \left( \frakX , \frakL \right)_{\vell}
 \, , \quad s \mapsto \ol{ t^{-\vell} \tilde{s} } \, ,$$
which is an $H$-equivariant linear isomorphism.
\end{Remark}

\subsection*{The associated graded space} \ 

\begin{Theorem} \labell{t:main}
Fix an equivariant family $ \frakL \to \frakX \to \C^n $ 
with an $H$-action.  
Let
$$  V := \Gamma_\hol(X_\bfone,\frakL|_{X_\bfone}) .$$
For each $\vell \in \Z^n$,
consider the ``sweep, twist, extend, restrict'' map
\begin{equation} \labell{sweep-twist-extend-restrict}
F_{\vell} 
   \to \Gamma_\hol \left( X_\bfzero , \frakL|_{X_\bfzero} \right) 
\quad , \quad 
    s \mapsto {\ol{ \bft^{-\vell} \tilde{s} }}|_{X_\bfzero} .
\end{equation}
Let
$$ V_\graded := \bigoplus\limits_{\vell} F_{\vell}/F_{>\vell} .$$
\begin{enumerate}
\item[(a)]
For each $\vell \in \Z^n$, the kernel of the 
``sweep, twist, extend, restrict'' map~\eqref{sweep-twist-extend-restrict}
contains $F_{>\vell}$,
and it is equal to $F_{>\vell}$
if the higher cohomology of the sheaf of holomorphic sections of $\frakL$
vanishes.
Thus, the ``sweep, twist, extend, restrict'' maps fit together into a map
$$ V_\graded \to \Gamma_{\hol}(X_\bfzero,\frakL|_{X_\bfzero}) ,$$
and 
if the higher cohomology of the sheaf of holomorphic sections of $\frakL$
vanishes, then the restriction of this map 
to each summand of $V_\graded$ is one-to-one.

\end{enumerate}

\medskip
For each $\vell \in \Z^n$, 
let $V_{\vell}$ be an $H$-invariant complement 
of $F_{>\vell}$ in $F_{\vell}$.
The composition
$V_{\vell} \xrightarrow{\text{inclusion}}
   F_{\vell} \xrightarrow{\text{quotient}}
   F_{\vell} / F_{>\vell}$
is an isomorphism;
let
$ i_{\vell} \colon F_{\vell}/F_{>\vell} \hookrightarrow V$ be 
the inverse of this isomorphism,
followed by the inclusion map $ V_{\vell} \xrightarrow{\text{inclusion}} V $.
\begin{enumerate}
\item[(b)]
Consider the ($H$-equivariant) map 
$(\sum_{\vell} i_{\vell}) \colon V_\graded \to V$.
If the mild technical condition~\eqref{mild technical} holds,
then this map is onto.
If the higher cohomology 
of the sheaf of holomorphic sections of $\frakL$ vanishes,
then this map is one-to-one.

\end{enumerate}
\end{Theorem}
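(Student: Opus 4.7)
Plan. Part (a) splits into two halves. For the inclusion $F_{>\vell}\subseteq\ker$, I would unwind definitions: if $s\in F_{\vell'}$ with $\vell'>\vell$, the identity
\[\ol{\bft^{-\vell}\tilde{s}}=\bft^{\vell'-\vell}\,\ol{\bft^{-\vell'}\tilde{s}}\]
on $\frakX$ exhibits the weight-$\vell$ extension of $s$ as a holomorphic section times the monomial $\bft^{\vell'-\vell}$, whose exponents are all nonnegative with at least one strictly positive; this monomial vanishes along $X_\bfzero$, so restriction to $X_\bfzero$ kills the product, and linearity extends the conclusion to the span $F_{>\vell}$.

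For the reverse inclusion in (a), and also for (b)'s injectivity, I would exploit the vanishing hypothesis via Grauert's theorem. Since $\pi$ is a proper submersion of complex manifolds and $H^{\ge 1}(\frakX,\calO_\frakL)=0$, the direct image $\pi_*\calO_\frakL$ is a locally free $(\Cx)^n$-equivariant coherent sheaf on $\C^n$ whose fibre over $\bfzero$ is $W:=\Gamma_\hol(X_\bfzero,\frakL|_{X_\bfzero})$. A standard contraction argument using the $(\Cx)^n$-action provides an equivariant trivialisation $\Gamma_\hol(\frakX,\frakL)\cong\calO(\C^n)\otimes W$, and decomposing $W=\bigoplus_\mu W_\mu$ into weight spaces yields
\[\Gamma_\hol(\frakX,\frakL)_\vell=\bigoplus_{\mu\ge\vell}\C\,\bft^{\mu-\vell}\otimes W_\mu.\]
Under the sweep--twist--extend isomorphism $F_\vell\cong\Gamma_\hol(\frakX,\frakL)_\vell$ and the ensuing identification $V\cong W$, the subspace $F_\vell$ becomes $\bigoplus_{\mu\ge\vell}W_\mu$, $F_{>\vell}$ becomes $\bigoplus_{\mu>\vell}W_\mu$, and the map~\eqref{sweep-twist-extend-restrict} becomes the projection onto $W_\vell$. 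The equality $\ker=F_{>\vell}$ in (a) then follows immediately.

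For (b)'s surjectivity, I would first invoke Propositions~\ref{prop:all j} and~\ref{prop:1}, whose proofs rely on the mild technical condition~\eqref{mild technical}, to conclude that every $s\in V$ lies in some $F_\vell$. Because $X_\bfone$ is compact, $V$ is finite-dimensional, so $V=F_{\vell^*}$ for some $\vell^*$; I would then recursively decompose via $F_\vell=V_\vell\oplus F_{>\vell}$, with termination ensured by a dimension count (the filtration has only finitely many distinct levels in $V$, and at a $\vell$ maximal among those achieving a given nonzero level one has $V_\vell\ne 0$). For injectivity, the description above shows each $V_\vell$ to be a linear section of the projection $\bigoplus_{\mu\ge\vell}W_\mu\twoheadrightarrow W_\vell$; enumerating the $\vell$'s by any linear extension of the product partial order on $\Z^n$, the matrix of $\sum_\vell i_\vell\colon V_\graded\to V\cong W$ is triangular with the diagonal isomorphisms $V_\vell\xrightarrow{\cong}W_\vell$, hence invertible.

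The principal obstacle is the Grauert/equivariant-trivialisation step; once it is in place, both the reverse inclusion in (a) and the injectivity half of (b) follow as formal consequences, while the mild technical condition enters only through the propositions that initiate the surjectivity induction.
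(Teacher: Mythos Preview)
Your treatment of $F_{>\vell}\subseteq\ker$ coincides with the paper's. For the reverse inclusion in~(a) and for injectivity in~(b), however, you take a genuinely different route: you invoke Grauert's coherence and base-change theorems together with an equivariant trivialisation of $\pi_*\calO_\frakL$ over the Stein base~$\C^n$, which reduces both statements to transparent linear algebra in $W=\bigoplus_\mu W_\mu$. The paper instead proves a bespoke ``holomorphic Hadamard lemma'' (Theorem~\ref{HA} in Appendix~\ref{app:hadamard}) to show that any section of $\frakL$ vanishing on $X_\bfzero$ decomposes as $\sum_j t_j\sigma_j$, giving $\ker\subseteq F_{>\vell}$ directly (Proposition~\ref{map to centre} via Lemma~\ref{l:hadamard}), and a separate ``linear independence lemma'' (Appendix~\ref{app:indep}) for injectivity (Proposition~\ref{p:one-to-one}). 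Your argument is more conceptual but imports heavier machinery---you need the chain Leray plus Cartan~A to pass from the global vanishing hypothesis to $R^{\geq 1}\pi_*\calO_\frakL=0$, then base-change for local freeness and the fibre identification, then equivariant triviality of holomorphic vector bundles under a contracting torus action; the paper's approach is longer but entirely self-contained in the analytic category.

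One small gap: your surjectivity argument under the mild technical condition alone needs, for the termination step, not just that every $s$ lies in some $F_\vell$ (Propositions~\ref{prop:all j} and~\ref{prop:1}) but also that the indices $\vell$ with $F_\vell\neq 0$ are bounded above---this is exactly Lemma~\ref{l:not too big} (or equivalently Proposition~\ref{p:lsj}) in the paper, and without it the set $\{\vell:F_\vell=F\}$ need not have maximal elements, so your ``$\vell$ maximal among those achieving a given nonzero level'' may fail to exist.
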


Propositions~\ref{map to centre}, \ref{p:onto}, and~\ref{p:one-to-one}
below constitute the proof of Theorem~\ref{t:main}.

\begin{rmk}
In the setup of Theorem~\ref{t:main},
assuming the mild technical condition~\eqref{mild technical}
and the vanishing of higher cohomology,
if $V$ is equipped with a Hermitian inner product and $H$ acts unitarily
(or if $H$ is the complexification of a real Lie group
that acts unitarily),
then we can take $V_{\vell}$ to be the orthocomplement of $F_{>\vell}$.
\end{rmk}

Theorem~\ref{t:main} has the following corollary:

\begin{Corollary}
In the setup of Theorem~\ref{t:main},
we obtain an $H$-equivariant linear isomorphism
$$ V \to \bigoplus\limits_{\vell} F_{\vell} / F_{>\vell} 
 \ \left( = V_\graded \right), $$
and, for each $\vell$, an $H$-equivariant linear injection
of the ``leaf'' $F_{\vell}/F_{>\vell}$
into the space of sections over the special fibre:
$$ F_{\vell} / F_{>\vell} \hookrightarrow 
   \Gamma_{\hol} (X_\bfzero , \frakL|_{X_{\bfzero}} ) .$$

The image of this embedding is contained in the $\vell$ weight space of 
   $\Gamma_{\hol} (X_\bfzero , \frakL|_{X_{\bfzero}} ) $
as a representation of $(\Cx)^n$.
Thus, if this representation is multiplicity-free
(which means that its non-trivial weight spaces are one dimensional),
then we obtain a decomposition of $V$ into one dimensional 
$H$-invariant subspaces. 

More generally, the isotypic decompositions of the ``leaves''
$F_{\vell}/F_{>\vell}$ as representations of $H$
give a decomposition of $V$ into $H$ invariant subspaces
that refines the isotypic decomposition of $V$ as a representation of $H$.
\end{Corollary}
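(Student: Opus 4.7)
The plan is to derive the Corollary almost entirely by combining parts (a) and (b) of Theorem~\ref{t:main}, with one extra observation concerning weights. For the global isomorphism $V \cong V_\graded$, I would invoke Theorem~\ref{t:main}(b) directly: under the mild technical condition the map $\sum_\vell i_\vell \colon V_\graded \to V$ is surjective, and under the vanishing of higher cohomology it is injective, hence an $H$-equivariant linear isomorphism. Its inverse $V \to V_\graded$ is the desired map. The $H$-equivariance holds because each $V_\vell$ was chosen $H$-invariant, making every component $i_\vell$ an $H$-equivariant linear map.

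For the injection of each leaf into the space of sections on the special fibre, I would apply Theorem~\ref{t:main}(a): the ``sweep, twist, extend, restrict'' map \eqref{sweep-twist-extend-restrict} has kernel containing $F_{>\vell}$ always, and equal to $F_{>\vell}$ under the vanishing hypothesis. Thus it factors through an $H$-equivariant linear injection
\[
F_{\vell}/F_{>\vell} \hookrightarrow \Gamma_\hol(X_\bfzero, \frakL|_{X_\bfzero}).
\]

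The one point that is not purely bookkeeping is the claim that this image lies in the $\vell$ weight space of $\Gamma_\hol(X_\bfzero, \frakL|_{X_\bfzero})$ for the $(\Cx)^n$-action. For this I would use the remark immediately preceding Theorem~\ref{t:main}: the ``sweep, twist, extend'' construction $s \mapsto \ol{\bft^{-\vell}\tilde{s}}$ is the inverse of the restriction map $\Gamma_\hol(\frakX, \frakL)_{\vell} \to F_\vell$, so $\ol{\bft^{-\vell}\tilde{s}}$ is an honest weight-$\vell$ section of $\frakL$ over $\frakX$. Since $X_\bfzero$ is $(\Cx)^n$-invariant, restricting a weight-$\vell$ section of $\frakL$ over $\frakX$ to $X_\bfzero$ produces a weight-$\vell$ section of $\frakL|_{X_\bfzero}$ over $X_\bfzero$. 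This is essentially the only substantive verification in the proof.

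The final consequences are then immediate corollaries of the isomorphism $V \cong V_\graded$ and the weight-space injection: if every non-trivial weight space of $\Gamma_\hol(X_\bfzero, \frakL|_{X_\bfzero})$ is one dimensional, then each leaf $F_\vell/F_{>\vell}$ is at most one dimensional, and transporting along $V \cong V_\graded$ yields a decomposition of $V$ into one dimensional $H$-invariant subspaces; more generally, assembling the $H$-isotypic decompositions of the individual leaves through the same isomorphism produces an $H$-invariant refinement of the $H$-isotypic decomposition of $V$. I do not anticipate a main obstacle here, since the conceptual work has already been packaged in Theorem~\ref{t:main} and in the sweep/twist/extend formalism.
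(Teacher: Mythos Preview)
Your proposal is correct and matches the paper's approach: the paper states the Corollary immediately after Theorem~\ref{t:main} with no separate proof, treating it as a direct consequence of parts~(a) and~(b) together with the Remark on weight spaces preceding the theorem. Your identification of the one nontrivial check---that the image of the ``sweep, twist, extend, restrict'' map lands in the $\vell$ weight space, via the isomorphism $F_{\vell} \cong \Gamma_\hol(\frakX,\frakL)_{\vell}$ from that Remark---is exactly the content the paper leaves implicit.
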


\subsection*{The sweep of every section can be extended after twisting } \

In this subsection, (under our mild technical condition,)
we show that, for every section $s\in \Gamma_\hol(X,L)$,
its ``sweep'' $\tilde{s}$ can be ``twisted''
such that it will extend to a holomorphic section of the line bundle $\frakL$
over the entire family $\frakX$.

We start by showing that we can consider one coordinate at a time.
We use this result in the criterion~\eqref{eq:Fl},
which we use in the proof of Proposition~\ref{p:onto}.

\begin{prop} \labell{prop:all j} 
Given any section $s\in \Gamma_\hol(X,L)$
and $\vec{\ell} = (\ell_1,\ldots,\ell_n) \in \Z^n$,
the section $\bft^{-\vell} \tilde{s}$ of $\frakL$ over $\frakX_\reg$
extends to a holomorphic section of $\frakL$ over $\frakX$
if and only if, for every $j \in \{1,\ldots,n\}$,
the section $t_j^{-\ell_j}\tilde{s}$
of $\frakL$ over $\frakX_{\reg}$
extends to a holomorphic section of $\frakL$ over
$\pi^{-1}(\C^{\times}\times\cdots\times\C^{\times}\times\
\C_{\text{$j$th}}\times\C^{\times}\times\cdots\times\C^{\times})$.
\end{prop}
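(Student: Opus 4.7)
My plan is to prove the two directions separately. The forward implication is essentially bookkeeping: on the open set $\frakX_j := \pi^{-1}(\Cx \times \cdots \times \C_{j^{\text{th}}} \times \cdots \times \Cx)$ the product $\prod_{k \neq j} t_k^{\ell_k}$ is a nowhere-vanishing holomorphic function, so if $\bft^{-\vell}\tilde s$ extends to a holomorphic section of $\frakL$ over $\frakX$, then multiplying by this factor exhibits a holomorphic extension of $t_j^{-\ell_j}\tilde s$ on $\frakX_j$.

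For the converse, assume that for each $j$ there is a holomorphic extension $\sigma_j$ of $t_j^{-\ell_j}\tilde s$ to $\frakX_j$. I would first multiply $\sigma_j$ by the (holomorphic, nowhere vanishing on $\frakX_j$) factor $\prod_{k \neq j} t_k^{-\ell_k}$ to obtain a holomorphic section $\tau_j$ of $\frakL$ on $\frakX_j$ which agrees with $\bft^{-\vell}\tilde s$ on $\frakX_\reg$. I would then glue the $\tau_j$: on the overlap $\frakX_j \cap \frakX_k = \pi^{-1}(\{\bft : t_i \neq 0 \text{ for all } i \notin \{j,k\}\})$ both $\tau_j$ and $\tau_k$ coincide with $\bft^{-\vell}\tilde s$ on the dense subset $\frakX_\reg$, so the identity principle will give $\tau_j = \tau_k$ once I check that the overlap is connected. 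Connectedness follows from Remark~\ref{rk:connected}(a) (or from the Ehresmann-type trivialization that underlies it), since the base $\{\bft : t_i \neq 0 \text{ for } i \notin \{j,k\}\}$ is connected and the fibres of $\pi$ are connected.

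The gluing produces a holomorphic section of $\frakL$ over $\bigcup_j \frakX_j = \frakX \setminus \pi^{-1}(S)$, where $S \subset \C^n$ is the analytic subset $\{\bft \in \C^n : \text{at least two coordinates of } \bft \text{ vanish}\}$. The final step, which I regard as the main substantive one, is to extend this section across $\pi^{-1}(S)$. Since $S$ has complex codimension $2$ in $\C^n$ and $\pi$ is a submersion, $\pi^{-1}(S)$ is an analytic subset of $\frakX$ of complex codimension $2$. By the second Riemann removable singularity theorem, applied in local trivializations of $\frakL$ to reduce to the case of holomorphic functions, the section extends uniquely to a holomorphic section of $\frakL$ on all of $\frakX$; no boundedness hypothesis is needed, precisely because the codimension is at least $2$. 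I expect this Hartogs-type extension across $\pi^{-1}(S)$ to be the main point of the argument; the gluing is a soft consequence of Ehresmann's lemma and the multiplication steps are routine.
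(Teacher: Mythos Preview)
Your proof is correct and follows essentially the same route as the paper: both arguments use that $\bft^{-\vell}\tilde s$ and $t_j^{-\ell_j}\tilde s$ differ on $\frakX_j$ by the invertible holomorphic factor $\prod_{k\neq j} t_k^{-\ell_k}$, and then invoke Hartogs-type extension across the codimension-two set $\pi^{-1}(S)$. The paper compresses your gluing-then-extend steps into a single appeal to ``Hartogs' lemma''; incidentally, your gluing step can be streamlined by noting that $\frakX_\reg$ is dense in each overlap (so $\tau_j=\tau_k$ by continuity), making the connectedness argument unnecessary.
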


\begin{proof}
By Hartog's lemma, $\bft^{-\vell} \tilde{s}$
extends to a holomorphic section over $\frakX$
if and only if, for every $j = 1,\ldots,n$,
it extends to a holomorphic section over the open set
$\pi^{-1}(
\Cx \times \ldots \times \Cx \times \C_{\text{$j$th}} \times \Cx
 \times \ldots \times \Cx)$.
This holds if and only if the section $t_j^{-\ell_j} \tilde{s}$
extends to a holomorphic section over this open set,
because the sections $\bft^{-\vec{\ell}} \tilde{s}$
and $t_j^{-\ell_j} \tilde{s}$
differ by multiplication by the monomial
$t_1^{\ell_1}\cdots \widehat{t_j^{\ell_j}} \cdots t_n^{\ell_n}$,
which is non-vanishing 
on $\pi^{-1}(\Cx \times \cdots \times \Cx \times \C_{j^{\text{th}}} 
\times \Cx \times \cdots \times \Cx)$.
\end{proof}

We now show that the ``sweep'' of every section
can be ``twisted'' such that it will 
extend across the origin of the $j$th copy of $\C^n$.
We use this in the proof of Proposition~\ref{p:lsj}.

\begin{prop} \labell{prop:1} 
Assume the mild technical condition~\eqref{mild technical}.
Given any section $s\in \Gamma_\hol(X_\bfone,\frakL|_{X_\bfone})$,
for every $j=1,\dots,n$
there exists an integer $\ell_j$ such that $t_j^{-\ell_j}\tilde{s}$
extends to a holomorphic section over
$\pi^{-1}(\C^{\times}\times\cdots\times\C^{\times}\times\
\C_{\text{$j$th}}\times\C^{\times}\times\cdots\times\C^{\times})$.
\end{prop}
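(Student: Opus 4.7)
My plan is to take $\ell_j := \mu$, where $\mu$ is the weight of the $(\Cx)_{j^{\text{th}}}$ action on the fibre $\frakL|_p$ at a well-chosen fixed point $p$, and to show that $t_j^{-\mu}\tilde s$ extends holomorphically to $U_j := \pi^{-1}(\Cx \times \cdots \times \C_{j^{\text{th}}} \times \cdots \times \Cx)$ by combining a local Laurent analysis at $p$, a Bialynicki--Birula spreading argument, and a Hartogs-type extension across a thin set. First, pick $\bft_0 \in \Cx^{j-1} \times \{0\} \times \Cx^{n-j}$; by Remark~\ref{rk:connected}(b) and the mild technical condition at $X_\bfzero$, the fibre $X_{\bft_0}$ contains a $(\Cx)_{j^{\text{th}}}$-fixed point $p$ at which all isotropy weights on $T_p X_{\bft_0}$ are non-positive, and let $\mu$ be the $(\Cx)_{j^{\text{th}}}$-weight of $\frakL|_p$.

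Next, I would linearize the $(\Cx)_{j^{\text{th}}}$-action at $p$, choosing holomorphic coordinates $(y,t_j,z_1,\ldots,z_f)$ and an equivariant trivialization of $\frakL$ so that the action has weight $+1$ on $t_j$, non-positive weights $w_1,\ldots,w_f$ on the $z_i$, and weight $\mu$ on the trivialized fibre of $\frakL$. Expanding $\tilde s$ on $\{t_j \ne 0\}$ as $\sum_{k,\alpha} c_{k,\alpha}(y)\,t_j^k z^{\alpha}$, the $(\Cx)_{j^{\text{th}}}$-invariance of $\tilde s$ forces $c_{k,\alpha} \ne 0$ only when $k = \mu - w\cdot\alpha$; non-positivity of the $w_i$ then yields $k \ge \mu$, so $\sigma := t_j^{-\mu}\tilde s$ extends holomorphically near $p$. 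Since the isotropy weights and the fibre weight are constant on the connected fixed component $F_0 \ni p$, the same analysis shows that $\sigma$ extends on a neighborhood of every point of $F_0$.

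Let $E\subseteq U_j$ be the maximal open set on which $\sigma$ extends holomorphically. Because $\tilde s$ is $(\Cx)^n$-invariant and $t_j$ is $(\Cx)_{k^{\text{th}}}$-invariant for $k\ne j$, the section $\sigma$ is $(\Cx)^{n-1}$-invariant and $(\Cx)_{j^{\text{th}}}$-equivariant of weight $-\mu$, so $E$ is $(\Cx)^n$-invariant. Transferring $F_0$ by the $(\Cx)^{n-1}$-action yields an analogous good fixed component $F_{0,\bft}\subseteq X_\bft$ in every fibre of $D_j := U_j \cap \pi^{-1}(\{t_j=0\})$, and $E$ contains a neighborhood of each. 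A Bialynicki--Birula argument in the compact fibre $X_\bft$ (whose unstable cell for $F_{0,\bft}$ is open and dense since all weights at $F_{0,\bft}$ are non-positive) then shows every $q \in X_\bft$ flowing into $F_{0,\bft}$ as $\tau\to\infty$ lies in $E$ by $(\Cx)_{j^{\text{th}}}$-invariance. Therefore $A := U_j\setminus E$, closed and contained in the divisor $D_j$, meets each fibre in a set of complex codimension at least one, giving complex codimension at least two in $U_j$. A Hartogs-type extension theorem (Shiffman's form for closed thin sets) then forces $\sigma$ to extend to all of $U_j$, so $A = \emptyset$ and $\ell_j := \mu$ works. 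The main obstacle is this codimension bound: it requires simultaneously combining the Laurent analysis at the good fixed point, the $(\Cx)^{n-1}$-equivariance to propagate the bound across all fibres of $D_j$, and the Bialynicki--Birula decomposition inside each compact fibre; without the mild technical condition even the local Laurent bound at $p$ would fail, collapsing the entire strategy.
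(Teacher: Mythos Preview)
Your local Laurent analysis at the good fixed point $p$ --- taking $\ell_j=\mu$ to be the fibre weight and showing from the $(\Cx)_{j^{\text{th}}}$-invariance of $\tilde s$ that only powers $t_j^k$ with $k\ge\mu$ survive --- is exactly what the paper does to establish \emph{nonemptiness} of the set of points near which $t_j^{-\ell}\tilde s$ extends. Where you diverge is in the globalization. The paper does not use Bialynicki--Birula or Hartogs at all; instead it runs an open--closed argument directly on the connected hypersurface $D_j$ (your notation). The set $U_\ell(s)\subseteq D_j$ of points near which $t_j^{-\ell}\tilde s$ extends is clearly open, and it is \emph{closed} because in any local chart the Laurent coefficients $C_k(\hat t,\hat z)$ of $\tilde s$ in the variable $t_j$ are holomorphic in the remaining variables, so if $C_k$ vanishes for all $k<\ell$ on a nonempty open subset it vanishes on the whole chart by analytic continuation. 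Connectedness of $D_j$ (Remark~\ref{rk:connected}(a)) then forces $U_\ell(s)=D_j$ once it is nonempty, and your fixed-point computation supplies the nonemptiness.

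Your route has genuine gaps in the generality of the proposition. The Bialynicki--Birula decomposition with an open dense attracting cell is a theorem for smooth projective varieties or compact K\"ahler manifolds, but Section~\ref{sec:filtrations} makes no such assumption on the fibres $X_{\bft}$; the mild technical condition~\eqref{mild technical} is strictly weaker (Remark~\ref{technical condition projective} only says K\"ahler is \emph{sufficient} for it). Even granting a fibrewise BB decomposition, Shiffman's extension theorem requires the bad set $A$ to have $(2N{-}2)$-Hausdorff measure zero, or to lie inside an analytic subset of complex codimension $\ge 2$ in $U_j$; the statement ``$A\cap X_{\bft}$ is a proper closed subset of each fibre'' does not by itself give this --- you would still need the lower BB strata to assemble into a global analytic subset of $D_j$ as $\bft$ varies, which is an extra argument you have not supplied. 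The paper's open--closed argument sidesteps both issues, using nothing beyond Laurent expansions, analytic continuation, and connectedness.
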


\begin{proof}

We present the proof of the proposition for the case of $j=1$;
the other cases are similar. 
Set $t = t_1$ and $\hat{t} = (t_2,\ldots,t_n)$.
Thus we need to show that there exists $\ell\in \Z$
such that $t^{-\ell}\tilde{s}$ extends to a holomorphic section
on the subset $\pi^{-1}(\C\times(\C^{\times})^{n-1})$ of $\frakX$. 

For each $\ell \in \Z$,
denote by $U_{\ell}(s)\subset\pi^{-1}(\{0\}\times (\C^{\times})^{n-1})$
the set of those $x \in \pi^{-1}(\{0\}\times (\C^{\times})^{n-1})$
for which $t^{-\ell}\tilde{s}$ extends to some neighbourhood
of $x$ in $\frakX$.
Then $U_{\ell}(s)$ is open in $\pi^{-1}(\{0\}\times(\C^{\times})^{n-1})$.
We will show that $U_{\ell}(s)$ is also closed.
Because $\pi^{-1}(\{0\}\times (\C^{\times})^{n-1})$ is connected
(by Remark~\ref{rk:connected}(a)),
this implies that $U_{\ell}(s)$ is either 
all of $\pi^{-1}(\{0\}\times(\C^{\times})^{n-1})$ or is empty.
We will finish by showing that there exists
$\ell\in \Z$ such that $U_{\ell}(s)\ne \emptyset$. 

For the closedness of $U_{\ell}(s)$, we fix a point $x_0$ 
that belongs to the closure of $U_{\ell}(s)$,
and we will show that $x_0\in U_{\ell}(s)$.

We can write $\pi(x_0) = (0,\hat{t}_0)$ for $\hat{t}_0 \in (\Cx)^{n-1}$.
Fix a neighbourhood $\calN(x_0)$ of $x_0$ in $\frakX$,
and a trivialization of $\frakL$ over $\calN(x_0)$,
and a complex analytic chart with domain $\calN(x_0)$
that has the form
\[ (t,\hat{t}, \hat{z}) \colon \calN(x_0) \rightarrow
   D(0) \times D(\hat{t}_0)\times B,\]
where $D(0)$ is a disk in $\C$ centred at $0$, 
where $D(\hat{t}_0)$ is a disk in $(\Cx)^{n-1}$ centred at $\hat{t}_0$,
where $B$ is a ball in $\C^k$ with $k$ the dimension of the fibre of $\pi$,
and in which the projection $\pi$ 
is the projection to the coordinates $(t,\hat{t})$.

The Laurent expansion of $\tilde{s}$ with respect to the variable $t$
then takes the form
\[\tilde{s}(t, \hat{t}, \hat{z})=\sum_{j\in\Z} C_j(\hat{t}, \hat{z})t^j,\] 
where the coefficients $C_j$
are holomorphic functions in $(\hat{t},\hat{z}) \in D(\hat{t}_0) \times B$.

Since $x_0$ is in the closure of $U_{\ell}(s)$
and $U_{\ell}(s)$ is open, there is 
a non-empty open subset $\calN'$ of $D(\hat{t}_0)\times B$ 
such that $(0, \hat{t}, \hat{z})\in U_{\ell}(s)$ 
for every $(\hat{t}, \hat{z})\in \calN'$. 
For each $j < \ell$, we then have $C_j(\hat{t}, \hat{z})=0$  
for all $(\hat{t}, \hat{z}) \in \calN'$.
By analytic continuation, for each $j < \ell$, 
we have $C_j(\hat{t}, \hat{z})=0$ 
for all $(\hat{t}, \hat{z})\in D(\hat{t}_0)\times B$.
Therefore $t^{-\ell}\tilde{s}$ extends to $\calN(x_0)$,
and thus $x_0\in U_{\ell}(s)$ as expected. 

We have now shown that the set $U_{\ell}(s)$ is both open and closed 
in $\pi^{-1}(\{0\}\times (\C^{\times})^{n-1})$,
so it is either all of $\pi^{-1}(\{0\}\times (\C^{\times})^{n-1})$
or is empty.
To complete the proof of Proposition \ref{prop:1},
we will now show that there exists an $\ell$ 
such that $U_{\ell}(s)\ne\emptyset$. 

Let $x_0\in\pi^{-1}(\{0\}\times (\C^{\times})^{n-1})$ be 
a $(\Cx)_{1^{\text{st}}}$-fixed point 
whose isotropy weights in its fibre are all non-positive;
see Remark~\ref{rk:connected}(b).

There exists a chart 
$(t,\hat{t},\hat{z}) \colon \calN(x_0) 
 \to D(0) \times D(\hat{t}_0) \times B$ as above,
and a trivialization $v \colon \frakL|_{\calN(x_0)} \to \C$
of the line bundle over $\calN(x_0)$,
in which the $(\Cx)_{\text{$1$st}}$ action is
\[ a\cdot (t, \hat{t}, \hat{z}, v) 
   = (at, \hat{t}, a^{m_1}z_1,\dots, a^{m_k}z_k, a^m v),\] 
where $m_1,\ldots,m_k$ are the weights for the 
isotropy $(\Cx)_{\text{$1$st}}$ action on $\pi^{-1}(0,\hat{t}_0)$ 
at $x_0$, and where $m \in \Z$ is the weight
for the $(\Cx)_{\text{$1$st}}$ action on the fibre $\frakL|_{x_0}$. 
(See e.g.\ \cite[Lemma~5 and Corollary 6]{ishida-karshon}.)
By our choice of $x_0$, the weights $m_1,\ldots,m_k$ are non-positive.

The holomorphic section $\tilde{s}$ of $\frakL$ over $\frakX_\reg$,
restricted to the intersection of this neighbourhood
with $\frakX_\reg$, becomes a holomorphic function
on $(D(0) \ssminus \{ 0 \}) \times D(\hat{t}_0) \times B$,
which we write as
\[ v(t,\hat{t}, \hat{z})
 = \sum_{j\in\Z,\ j_1,\dots, j_k\in\Z_{\ge 0}} 
 C_{j,j_1,\dots, j_k} (\hat{t}) t^jz_1^{j_1}\cdots z_k^{j_k}, \] 
where the coefficients $C_{j, j_1,\dots, j_k}$
are holomorphic functions of $\hat{t}$.

Since $\tilde{s}$ is $(S^1)^n$-equivariant, 
\[v(at, \hat{t}, a^{m_1}z_1,\dots, a^{m_k}z_k)
 = a^m v(t, \hat{t}, z_1,\dots, z_k), \] 
which implies that
\[ 
\sum C_{j, j_1,\dots, j_k} (\hat{t}) a^{j+m_1j_1+\cdots+m_kj_k}
 t^jz_1^{j_1} \cdots z_k^{j_k}
 = a^m\sum C_{j, j_1,\dots, j_k}
   (\hat{t})t^jz_1^{j_1}\cdots z_k^{j_k}.  \]

This identity holds on an open set, so the coefficients 
on the left hand side must be equal to those on the right hand side.
Therefore, the only $j, j_1,\dots,j_k$ with $C_{j, j_1,\dots, j_k}\ne 0$ 
are those that satisfy \[j+m_1j_1+\cdots+m_kj_k=m.\]

Thus 
\[v(t, \hat{t}, \hat{z})=\sum_j C_j (\hat{t}, \hat{z}) t^j,\] 
summing over those $j\in \Z$ for which there exist 
$j_1,\dots, j_k\in\Z_{\ge 0}$ with \[j= m-\sum_{i=1}^k m_ij_i.\]
Since all the weights $m_i$ are nonpositive, $j\ge m$.
Therefore, 
$x_0\in U_m(s)$,  
which implies that $U_m(s)\ne\emptyset$, as required.
\end{proof}

\subsection*{The $\ell$-vector} \ 

For each holomorphic section $s \in \Gamma_\hol(X_\bfone,\frakL|_{X_\bfone})$ 
we constructed an equivariant extension $\tilde{s}$,
which is a section of the line bundle $\frakL \to \frakX$ 
over the open dense subset $\frakX_\reg$,
and we showed how to twist this extension $\tilde{s}$ 
so that, after the twist, it further extends to
a global holomorphic section of the line bundle $\frakL$
over the entire family $\frakX$.
In the following proposition
we show that the set of such twists is bounded.
We use this proposition in the proofs 
of Lemma~\ref{l:not too big} and~Proposition~\ref{p:onto}.

\begin{prop} \labell{p:lsj}
Assume the mild technical condition~\eqref{mild technical}.
Let $s \in \Gamma_\hol(X_\bfone,\frakL|_{X_{\bfone}})$
be a non-zero holomorphic section.
Then for each $j \in \{ 1,\ldots,n\}$
there exists a unique integer $\ell_{s,j}$ such that the set
\begin{multline} \labell{Nsj}
\{ \ell_j \in \Z \ | \ 
t_j^{-\ell_j} \tilde{s}
\text{ extends to } \\
\text{ a holomorphic section of $\frakL$ over }
\pi^{-1} ( \Cx \times \cdots \times \Cx \times \C_{\text{$j$th}} 
\times \Cx \times \cdots \times \Cx ) \}\
\end{multline}
coincides with the set
$$ \{ \ell_j \ | \ \ell_j \leq \ell_{s,j} \}.$$
\end{prop}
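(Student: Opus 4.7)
The plan is to verify three properties of the set in~\eqref{Nsj} that, together, force it to coincide with $\{\ell_j\in\Z \mid \ell_j\le \ell_{s,j}\}$ for a uniquely determined integer $\ell_{s,j}$: (i) it is non-empty, (ii) it is downward closed, and (iii) it is bounded above.

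First, non-emptiness is immediate from Proposition~\ref{prop:1}, which, under the mild technical condition~\eqref{mild technical}, produces some integer $\ell_j$ for which $t_j^{-\ell_j}\tilde{s}$ extends holomorphically across $\pi^{-1}(\Cx\times\cdots\times\C_{j\text{th}}\times\cdots\times\Cx)$. Downward closure is formal: if $\ell_j$ lies in the set and $\ell_j'\le\ell_j$, then $t_j^{\ell_j-\ell_j'}$ is a holomorphic function on all of $\frakX$, and multiplying the holomorphic extension of $t_j^{-\ell_j}\tilde{s}$ by it gives a holomorphic extension of $t_j^{-\ell_j'}\tilde{s}$.

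The substantive step is (iii), the upper bound, which I would prove by contradiction using that $s\neq 0$ together with the connectedness in Remark~\ref{rk:connected}(a). Suppose the set in~\eqref{Nsj} is unbounded above. Fix some $\ell_0$ in it and set $\sigma := t_j^{-\ell_0}\tilde{s}$, a holomorphic section of $\frakL$ over the open set $W_j := \pi^{-1}(\Cx\times\cdots\times\C_{j\text{th}}\times\cdots\times\Cx)$. For every $\ell_j$ in the set with $\ell_j>\ell_0$, the section $t_j^{-(\ell_j-\ell_0)}\sigma$ extends holomorphically to $W_j$, which, in a local trivialization and complex chart adapted to $\pi$ at a point of $\pi^{-1}(\{0\}_{j\text{th}}\times(\Cx)^{n-1})$, forces the Laurent coefficients of $\sigma$ in powers of $t_j$ below order $\ell_j-\ell_0$ to vanish. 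Letting $\ell_j\to\infty$, every such Taylor coefficient in $t_j$ vanishes, so $\sigma$ vanishes identically in a neighbourhood of that point. By analytic continuation on the connected complex manifold $W_j$ (Remark~\ref{rk:connected}(a)), $\sigma$ vanishes on all of $W_j$, hence $\tilde{s}\equiv 0$ on $\frakX_\reg$; restricting to $X_\bfone$ gives $s=0$, a contradiction.

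Having established (i)--(iii), the set in~\eqref{Nsj} is a non-empty, downward-closed, bounded-above subset of $\Z$, and therefore equals $\{\ell_j\le\ell_{s,j}\}$ for a unique maximal element $\ell_{s,j}$. The main obstacle is the infinite-order vanishing step in (iii); it is purely local but must be combined with the global connectedness of $W_j$ to rule out cancellation of $\sigma$ only on part of the fibre over $\{0\}_{j\text{th}}\times(\Cx)^{n-1}$, and this is precisely what Remark~\ref{rk:connected}(a) supplies.
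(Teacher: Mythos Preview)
Your proof is correct and follows essentially the same approach as the paper: both establish non-emptiness via Proposition~\ref{prop:1}, downward closure by multiplying by a non-negative power of $t_j$, and boundedness above by arguing that otherwise $\tilde{s}$ would vanish identically (the paper phrases this last step as ``the zero must be of finite order'' rather than as a contradiction, but the content is the same). Your version is slightly more explicit in invoking the connectedness of $W_j$ from Remark~\ref{rk:connected}(a) for the analytic continuation step, which the paper leaves implicit.
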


\begin{Definition} \labell{l-vector}
In the setup of Proposition~\ref{p:lsj},
the \textbf{$\ell$-vector} of~$s$ is 
$$ \vell_s := ( \ell_{s,1} , \ldots , \ell_{s,n} ) .$$
\end{Definition}

\begin{proof}[Proof of Proposition~\ref{p:lsj}]
From Proposition \ref{prop:1}, we know that the set \eqref{Nsj} is non-empty. 
Since $s$ is non-zero, the set \eqref{Nsj} is a proper subset of $\Z$.
(If $\tilde{s}$ does not extend to the open set
$\pi^{-1} ( \Cx \times \cdots \times \Cx \times \C_{\text{$j$th}}
\times \Cx \times \cdots \times \Cx )$,
then the exponent $0$ is not in the set~\eqref{Nsj}.
Otherwise, examine the extension $\ol{\tilde{s}}$ 
near a point where the $j$th coordinate vanishes.
At that point, if $\ol{\tilde{s}}$ has a zero,
then the zero must be of finite order.
So, if we multiple the section $\tilde{s}$ 
by a sufficiently negative power of $t_j$,
then we'll get a section that doesn't extend.)
Furthermore, if $\ell_j$ is in the set \eqref{Nsj} and $\ell_j'\le \ell_j$,
then $\ell_j'$ is in the set \eqref{Nsj}.
These facts imply the conclusion of the proposition.
\end{proof}

Recall that
to any lattice vector $\vec{\ell} = (\ell_1,\ldots,\ell_n)$ in $\Z^n$
the filtration associates the space $F_{\vell}$
that consists of those sections $s$ such that 
$\bft^{-\vell}\tilde{s}$ extends to a global holomorphic section
of $\frakL \to \frakX$.
For a holomorphic section $s$ with an $\ell$-vector $\ell_s$
(Definition~\ref{l-vector}),
Propositions~\ref{prop:all j} and~\ref{p:lsj} imply the following criterion, 
which we use in the proof of Proposition~\ref{p:onto}:
\begin{equation} \labell{eq:Fl}
s \in F_{\vell} \quad \text{ if and only if } \quad \vell \leq \vell_s \,,
\end{equation}
with respect to the product partial ordering~\eqref{product partial ordering}.

\subsection*{Leaves embed} \ 

Assuming that the higher cohomology
of the sheaf of holomorphic sections of $\frakL$ vanishes,
we will now show how to embed each ``leaf'' $F_{\vell}/F_{>\vell}$ 
into the space of holomorphic sections over the special fibre $X_{\bfzero}$.
We begin with a technical lemma, which we use in the proof 
of Proposition~\ref{map to centre}.

\begin{Lemma} \labell{l:hadamard}
Assume that the higher cohomology 
of the sheaf of holomorphic sections of $\frakL \to \frakX$ vanishes.
Let $s$ be a holomorphic section 
in $\Gamma_\hol(X_\bfone,\frakL|_{X_\bfone})$.
Let $\vell \in \Z^n$.
Suppose that the section $\bft^{-\vell} \tilde{s}$ 
extends to a section in $\Gamma_\hol(\frakX,\frakL)$
whose restriction to the special fibre $X_\bfzero = \pi^{-1}(\bfzero)$ 
vanishes.
Then there exist holomorphic sections 
$s_1,\ldots,s_n \in \Gamma_\hol(X_\bfone,\frakL|_{X_\bfone})$
such that $s = s_1 + \ldots + s_n$
and such that, for every $j = 1, \ldots, n$,
the section $t_j^{-1}\bft^{-\vell} \tilde{s}_j$ extends 
to a section in $\Gamma_\hol(\frakX,\frakL)$.
\end{Lemma}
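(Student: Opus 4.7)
The plan is to invoke Theorem~\ref{HA} (the global ``holomorphic Hadamard Lemma'' of Appendix~\ref{app:hadamard}) and then translate its conclusion through the sweep-twist-extend correspondence. Set $\sigma := \ol{\bft^{-\vell}\tilde{s}} \in \Gamma_\hol(\frakX,\frakL)$, which by construction lies in the weight-$\vell$ subspace $\Gamma_\hol(\frakX,\frakL)_{\vell}$ and which, by hypothesis, vanishes on the special fibre $X_\bfzero$. Because $\pi$ is a holomorphic submersion, the pulled-back coordinates $t_1,\ldots,t_n$ form a regular sequence on $\frakX$ cutting out $X_\bfzero$, so the Koszul complex on $(t_1,\ldots,t_n)$ with values in $\calO_\frakL$ is a locally free resolution of $\calO_\frakL|_{X_\bfzero}$.

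Breaking this resolution into short exact sequences and chasing through their long exact sequences of cohomology, the assumption $H^{\geq 1}(\frakX,\calO_\frakL)=0$ forces the multiplication map
$$ \Gamma_\hol(\frakX,\frakL)^{\oplus n} \longrightarrow \{\rho\in\Gamma_\hol(\frakX,\frakL) : \rho|_{X_\bfzero}=0\}, \qquad (\sigma_1,\ldots,\sigma_n)\mapsto \sum_{j=1}^n t_j \sigma_j, $$
to be surjective; this is the content of Theorem~\ref{HA}. Applying it to $\sigma$ yields $\sigma_1,\ldots,\sigma_n \in \Gamma_\hol(\frakX,\frakL)$ with $\sigma = \sum_j t_j \sigma_j$. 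Because the entire setup is $(\Cx)^n$-equivariant and the function $t_j$ carries weight $-e_j$ in the action on sections, we may project each $\sigma_j$ onto its weight-$(\vell+e_j)$ component by averaging against the character $\tau\mapsto \tau^{-\vell - e_j}$ over the compact subtorus $(S^1)^n$; this operation preserves holomorphicity and, since $\sigma$ already has weight $\vell$, does not disturb the equality.

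Finally, using the weight-$(\vell+e_j)$ case of the isomorphism $F_\mu\xrightarrow{\cong}\Gamma_\hol(\frakX,\frakL)_{\mu}$, $s\mapsto\ol{\bft^{-\mu}\tilde{s}}$, from the Remark preceding the statement, define $s_j := \sigma_j|_{X_\bfone}$; then $s_j\in F_{\vell+e_j}\subset\Gamma_\hol(X_\bfone,\frakL|_{X_\bfone})$ and $\tilde{s}_j = \bft^{\vell+e_j}\sigma_j = t_j\,\bft^{\vell}\sigma_j$ on $\frakX_\reg$. Consequently $t_j^{-1}\bft^{-\vell}\tilde{s}_j = \sigma_j$ extends to all of $\frakX$, and restricting $\sum_j\tilde{s}_j = \bft^\vell\sum_j t_j\sigma_j = \bft^\vell\sigma = \tilde{s}$ to $X_\bfone$ gives $\sum_j s_j = s$. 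The main obstacle is precisely the Hadamard step: passing from the vanishing of $\sigma$ on the codimension-$n$ subvariety $X_\bfzero$ to a global decomposition $\sigma=\sum t_j\sigma_j$ requires the vanishing of the higher cohomology of $\calO_\frakL$ in order to lift local Koszul decompositions to a global one, so the real work is concentrated in Theorem~\ref{HA}.
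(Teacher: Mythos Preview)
Your proof is correct and follows essentially the same route as the paper: invoke Theorem~\ref{HA} to write $\ol{\bft^{-\vell}\tilde{s}}=\sum_j t_j\sigma_j$, average over $(S^1)^n$ to make the pieces equivariant, and then read off $s_j$ by restricting to $X_\bfone$. The only cosmetic difference is that you average $\sigma_j$ down to its weight-$(\vell+e_j)$ component whereas the paper averages $s_j':=t_j\bft^{\vell}\sigma_j'$ down to its invariant component; these are the same operation up to multiplication by the monomial $\bft^{\vell+e_j}$.
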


\begin{proof}
Since we are assuming that the higher cohomology 
of the sheaf of holomorphic sections of $\frakL$ vanishes,
Theorem~\ref{HA} of Appendix~\ref{app:hadamard}
allows us to write
$\bft^{-\vell} \tilde{s} = \sum_{j=1}^n t_j \sigma_j'$,
where $\sigma_j'$ are holomorphic sections in $\Gamma_\hol(\frakX,\frakL)$.
By applying an averaging argument to $s_j' := t_j \bft^{\vell} \sigma_j'$
(see below), 
we obtain sections $\tilde{s}_j$ that are $(\Cx)^n$ equivariant,
such that $\tilde{s} = \tilde{s}_1 + \ldots + \tilde{s}_n$
and such that $t_j^{-1} \bft^{-\vell} \tilde{s}_j$ extend to $\frakX$.
We then set $s_j := \tilde{s}_j|_{X_\bfone}$,
so that $\tilde{s}_j$ is the sweep of $s_j$.

(Here is the averaging argument.  Let
$$ \tilde{s}_j(x) := \int_{a \in (S^1)^n} s_j'(a \cdot x) da.$$
Then $\tilde{s}_j$ is holomorphic and $(S^1)^n$-invariant,
so it is $(\Cx)^n$-invariant. Writing
\begin{multline*}
(t_j^{-1} \bft^{-\vell} \tilde{s}_j)(x)
 =  t_j(x)^{-1} \bft^{-\vell}(x) \int_{a \in (S^1)^n}
    t_j(a \cdot x) \bft^{\vell}(a \cdot x) \sigma_j'(a \cdot x) da
 = \int_{a \in (S^1)^n}  a_j a^{\vell} \sigma_j'(a \cdot x)  da \, , 
\end{multline*}
we obtain that $t_j^{-1}\bft^{-\vell} \tilde{s}_j$ extends
to a holomorphic section over $\frakX$.)
\end{proof}

The following proposition is a rephrasing of Part (a) of Theorem \ref{t:main}.
We also use it in the proof of Proposition~\ref{p:one-to-one}.

\begin{Proposition} \labell{map to centre}
For every $\vec{\ell} \in \Z^n$, we have the $H$-equivariant linear map 
given by ``sweep, twist, extend, restrict'':
$$ \Phi \colon F_{\vell} 
   \to \Gamma_\hol (X_\bfzero , \frakL|_{X_{\bfzero}} ) \, , \quad 
 s \mapsto 
 \left.\left(\text{extension to } \frakX
 \text{ of } \bft^{-\vell} \tilde{s} \right)\right|_{X_0}.$$
\bigskip
The kernel of this map contains $F_{> \vell}$,
and it is equal to $F_{>\vell}$
if the higher cohomology of the sheaf of holomorphic sections 
of $\frakL$ vanishes.
\end{Proposition}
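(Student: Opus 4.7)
The plan is to prove the two containments $F_{>\vell} \subseteq \ker \Phi$ and (under the cohomology assumption) $\ker \Phi \subseteq F_{>\vell}$ separately.

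For the first containment, by linearity of $\Phi$ and the definition of $F_{>\vell}$ as the span of the $F_{\vell'}$ with $\vell' > \vell$, it suffices to show that $\Phi$ vanishes on each such $F_{\vell'}$. I would take $s \in F_{\vell'}$, so that $\bft^{-\vell'} \tilde{s}$ extends to a global holomorphic section $\sigma' \in \Gamma_\hol(\frakX, \frakL)$, and then write
\[
\bft^{-\vell} \tilde{s} \;=\; \bft^{\vell' - \vell} \cdot \bft^{-\vell'} \tilde{s} \;=\; \bft^{\vell' - \vell} \sigma' .
\]
Since $\vell' - \vell \in \Z_{\geq 0}^n$ the factor $\bft^{\vell' - \vell}$ is a holomorphic function on $\frakX$ (pulled back from a monomial on $\C^n$), so $\bft^{-\vell}\tilde{s}$ does extend. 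Since moreover $\vell' \neq \vell$, at least one exponent is strictly positive, so $\bft^{\vell' - \vell}$ vanishes identically on $X_\bfzero$; hence the restriction to $X_\bfzero$ is zero, i.e.\ $\Phi(s) = 0$.

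For the converse, assume the higher cohomology vanishes, and let $s \in F_{\vell}$ with $\Phi(s) = 0$. By definition of $F_\vell$, the section $\bft^{-\vell}\tilde{s}$ extends to a global holomorphic section on $\frakX$, and by hypothesis this extension restricts to zero on the special fibre $X_\bfzero$. This is exactly the setup of Lemma~\ref{l:hadamard}, which furnishes a decomposition $s = s_1 + \cdots + s_n$ in $\Gamma_\hol(X_\bfone, \frakL|_{X_\bfone})$ such that for each $j$, the twisted section $t_j^{-1} \bft^{-\vell} \tilde{s}_j$ extends to a global holomorphic section of $\frakL$ over $\frakX$. Writing $e_j$ for the $j$-th standard basis vector of $\Z^n$, this says precisely $s_j \in F_{\vell + e_j}$, and since $\vell + e_j > \vell$ we get $s_j \in F_{>\vell}$ for each $j$. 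Because $F_{>\vell}$ is a linear subspace, $s = s_1 + \cdots + s_n \in F_{>\vell}$.

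The main obstacle is the converse direction, which hinges on Lemma~\ref{l:hadamard}; that lemma in turn rests on the holomorphic Hadamard Lemma (Theorem~\ref{HA} of Appendix~\ref{app:hadamard}) and on the vanishing of higher cohomology, the latter being precisely why the cohomology hypothesis enters Proposition~\ref{map to centre}. The $H$-equivariance of $\Phi$ is automatic from the $H$-equivariance of the sweep, twist, extend, and restrict operations (the twists $\bft^{-\vell}$ are $H$-invariant because $H$ acts trivially on the base $\C^n$), so no extra work is needed there.
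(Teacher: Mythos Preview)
Your proof is correct and follows essentially the same approach as the paper's own proof: the factorization $\bft^{-\vell}\tilde{s} = \bft^{\vell'-\vell}\sigma'$ for the containment $F_{>\vell}\subseteq\ker\Phi$, and the appeal to Lemma~\ref{l:hadamard} (hence to Theorem~\ref{HA}) for the reverse containment under the cohomology hypothesis. If anything, you are slightly more explicit than the paper in reducing the first containment by linearity to the case $s\in F_{\vell'}$ for a single $\vell'>\vell$.
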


\begin{proof} Let $s\in F_{\vec{\ell}} $. 
Then $\mathbf{t}^{-\vec{\ell}}\tilde{s}$ extends to a unique 
global holomorphic section of the line bundle $\frakL$ over $\frakX$,
namely, $\ol{\mathbf{t}^{-\vec{\ell}}\tilde{s}}$.  
We denote by $s_{0}$ the restriction of this extended holomorphic section 
to the special fibre $X_{\mathbf{0}}$.  This defines a linear map  
\[ \Phi \colon F_{\vec{\ell}} 
    \rightarrow \Gamma_\hol(X_{\mathbf{0}}, \frakL|_{X_{\mathbf{0}}})
 \, , \quad s\mapsto s_0.  \]

Let $s \in F_{>\vell}$.
Let $\vell' > \vell$ be such that $s \in F_{\vell'}$.
Then 
 \[ s_0(x) = \ol{\mathbf{t}^{-\vec{\ell}} \tilde{s}} (x)
 = \mathbf{t}^{\vec{\ell'}-\vec{\ell}} 
    \cdot \ol{\mathbf{t}^{-\vec{\ell'}} \tilde{s}} (x)
 = \mathbf{t}^{\vec{\ell'}-\vec{\ell}}(x) 
    \cdot \ol{\mathbf{t}^{-\vec{\ell'}}\tilde{s}} (x)
 = 0 \cdot \ol{\mathbf{t}^{-\vec{\ell'}}\tilde{s}} (x)
 = 0,\] 
since $\pi(x)=(0,\dots, 0)$
and the exponents $\ell'_j-\ell_j$ are all non-negative
and at least one of them is positive.
This shows that $F_{>\vec{\ell}} \subseteq \ker(\Phi)$. 

Finally, suppose that the higher cohomology of the sheaf
of holomorphic sections of $\frakL$ vanishes,
and let $s \in \ker(\Phi)$.
Then the extension
of $\bft^{-\vell} \tilde{s}$ vanishes along $X_0$.
Lemma~\ref{l:hadamard}
implies that we can write $s = s_1 + \ldots + s_n$
where each $s_j$ belongs 
to $F_{(\ell_1,\ldots,\ell_{j-1},\ell_j+1,\ell_{j+1},\ldots,\ell_n)}$.
Because $(\ell_1,\ldots,\ell_{j-1},\ell_j+1,\ell_{j+1},\ldots,\ell_n)
 > \vell$, we get that $s \in F_{>\vell}$.
\end{proof}

\subsection*{Decomposition of the space of sections} \ 

Propositions~\ref{p:onto} and~\ref{p:one-to-one} below
constitute the proof of Part~(b) of Theorem~\ref{t:main}.
The proof of Proposition~\ref{p:onto} uses the following technical lemma.

\begin{lemma} \labell{l:not too big}
Assume the mild technical condition~\eqref{mild technical}.
Then, for each $j \in \{ 1 , \ldots , n \}$, the set
$$ \left\{ \, \ell_{s,j} \ | \ 
   0 \neq s \in \Gamma_\hol(X_\bfone,\frakL|_{X_\bfone}) \, \right\} $$
is bounded from above. \\
Here, $\ell_{s,j}$ are the integers from Proposition~\ref{p:lsj}.
\end{lemma}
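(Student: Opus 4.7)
The plan is to reduce the claim to finite-dimensional linear algebra. First, because $\pi\colon\frakX\to\C^n$ is a proper holomorphic submersion, the fibre $X_{\bfone}$ is a compact complex manifold, so $V:=\Gamma_\hol(X_{\bfone},\frakL|_{X_{\bfone}})$ is finite dimensional (a standard consequence of coherent sheaf theory on compact complex manifolds).

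Fix $j\in\{1,\ldots,n\}$ and, for each $\ell\in\Z$, set
\[ F^{(j)}_\ell := \{\, s\in V \mid t_j^{-\ell}\tilde{s}\ \text{extends holomorphically across}\ \pi^{-1}(\Cx\times\cdots\times\C_{j^{\text{th}}}\times\cdots\times\Cx)\,\}. \]
Because the sweep $s\mapsto\tilde{s}$ is linear and the holomorphic extension (when it exists) is unique, $F^{(j)}_\ell$ is a linear subspace of $V$; and $F^{(j)}_{\ell+1}\subseteq F^{(j)}_\ell$ because $t_j$ itself is holomorphic on the relevant open set. By Proposition~\ref{p:lsj}, every nonzero $s\in V$ has a finite integer $\ell_{s,j}$, so $s\notin F^{(j)}_\ell$ whenever $\ell>\ell_{s,j}$, and hence $\bigcap_{\ell}F^{(j)}_\ell=\{0\}$. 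Since $V$ is finite dimensional, the decreasing chain $(F^{(j)}_\ell)_\ell$ of subspaces must stabilize, and because its intersection is $\{0\}$ the stable value must also be $\{0\}$. Thus there exists $\ell_0\in\Z$ with $F^{(j)}_{\ell_0}=\{0\}$, and then for every nonzero $s\in V$ one must have $\ell_{s,j}<\ell_0$, which is the desired uniform upper bound.

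The only substantive ingredient beyond Proposition~\ref{p:lsj} and elementary linear algebra is the finite dimensionality of $V$. This follows from the compactness of $X_{\bfone}$ (itself a consequence of the properness of $\pi$, already exploited via Ehresmann's lemma in Remark~\ref{rk:connected}) together with the standard theorem that the space of global holomorphic sections of a holomorphic line bundle on a compact complex manifold is finite dimensional. I expect this to be the only nontrivial step; the rest is bookkeeping.
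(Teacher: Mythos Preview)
Your proof is correct, and it takes a genuinely different route from the paper's. The paper argues that the map $s\mapsto\ell_{s,j}$ is upper semi-continuous (by examining Laurent coefficients via Cauchy's integral formula), then descends this map to the compact projective space $\PP(V)$ and invokes that an upper semi-continuous integer-valued function on a compact set is bounded above. You instead observe that the sets $F^{(j)}_\ell$ are \emph{linear subspaces} of the finite-dimensional space $V$, forming a decreasing chain with trivial intersection, and conclude by elementary linear algebra that the chain reaches $\{0\}$ at some finite index.

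Both arguments rest on the finite-dimensionality of $V$ (the paper uses it through the compactness of $\PP(V)$), but your approach exploits the linear structure more directly and avoids the analytic verification of semi-continuity, which the paper only sketches. Your argument is shorter and more self-contained; the paper's argument has the mild advantage that it would survive even if the filtration pieces were merely closed subsets rather than subspaces, though that extra generality is not needed here.
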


\begin{proof}
The condition ``$t_j^{-\ell_j} \tilde{s}$ does not extend 
to a holomorphic section of $\frakL$ over
$\pi^{-1} ( \Cx \times \cdots \times \Cx \times \C_{\text{$j$th}}
\times \Cx \times \cdots \times \Cx ) $''
is an open condition in $s$.

(Here are some details. For a holomorphic section in $\Gamma_\hol(\mathfrak{X}_\reg,\frakL|_{X_\reg})$,
the property 
of not extending to a holomorphic section of $\frakL$ over
$\pi^{-1} ( \Cx \times \cdots \times \Cx \times \C_{\text{$j$th}}
\times \Cx \times \cdots \times \Cx ) $
means that  in local coordinates near some point
in $\{ t_j = 0 \}$, 
certain coefficients in certain Laurent expansions are non-zero.
By choosing a basis of  $\Gamma_\hol(X_1,\frakL|_{X_1})$ (which is finite dimensional since $X_1$ is compact), we can identify $\Gamma_\hol(X_1,\frakL|_{X_1})$ with $\C^N$ for some $N\in\Z_{\ge 0}$. 
From Cauchy's integral formula
we see that these coefficients are continuous 
with respect to the  topology
on $\C^N$.  
)

It follows that the map $s \mapsto \ell_{s,j}$
from $\Gamma_\hol( X_\bfone , \frakL|_{X_\bfone} ) \ssminus \{ 0 \} $ to $\Z$
is upper semi-continuous.
Since this map descends to a map 
$ \PP(\Gamma_\hol(X_\bfone,\frakL|_{X_\bfone})) \to \Z$ 
whose domain is compact,
we conclude that this map is bounded from above.
\end{proof}

\begin{Proposition} \labell{p:onto}
Assume that the mild technical condition~\eqref{mild technical} holds.
Then the map $V_\graded \to V$ 
that is defined in Theorem~\ref{t:main}(b) is onto.
\end{Proposition}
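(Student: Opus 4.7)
The plan is to reduce the proposition to a finite descending induction by using Lemma~\ref{l:not too big} to bound the $\ell$-vectors from above. Concretely, set $L_j := \sup\{\ell_{s,j} \mid 0 \ne s \in V\}$, which is finite by that lemma, and let $\vell^{\max} := (L_1, \ldots, L_n)$. The criterion~\eqref{eq:Fl} then says that every nonzero $s \in F_\vell$ satisfies $\vell \le \vell_s \le \vell^{\max}$; in particular $F_\vell = \{0\}$ whenever $\vell \not\le \vell^{\max}$, so $F_{>\vell^{\max}} = \{0\}$ and hence $V_{\vell^{\max}} = F_{\vell^{\max}}$. This makes the sublattice $\{\vell' : \vell \le \vell' \le \vell^{\max}\}$ finite for every $\vell \le \vell^{\max}$, which is what will power the induction.

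The main claim, to be proved by descending induction on $\vell \le \vell^{\max}$, is that
\[ F_\vell \;\subseteq\; \sum_{\vell \le \vell' \le \vell^{\max}} V_{\vell'}. \]
I would run the induction on the height $h(\vell) := \sum_{j=1}^n (L_j - \ell_j) \in \Z_{\ge 0}$. The base case $h(\vell)=0$, i.e.\ $\vell=\vell^{\max}$, is immediate from $V_{\vell^{\max}} = F_{\vell^{\max}}$. For the inductive step, given $s \in F_\vell$, I use the complement property to write $s = v_\vell + r$ with $v_\vell \in V_\vell$ and $r \in F_{>\vell}$. Expanding $r$ via $F_{>\vell} = \on{span}\bigcup_{\vell'>\vell} F_{\vell'}$ gives $r = \sum_k c_k r_k$ with $r_k \in F_{\vell^{(k)}}$ and $\vell^{(k)} > \vell$; any term with $\vell^{(k)} \not\le \vell^{\max}$ forces $r_k=0$ by the first paragraph and can be discarded, so we may assume $\vell < \vell^{(k)} \le \vell^{\max}$, giving $h(\vell^{(k)}) < h(\vell)$. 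The inductive hypothesis then rewrites each $r_k$ as a sum of elements of $V_{\vell'}$ with $\vell' \ge \vell^{(k)} > \vell$, and substituting produces the desired decomposition of $s$.

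Applying the main claim at $\vell = \vell_s$, which is well-defined by Proposition~\ref{p:lsj} for any nonzero $s \in V$, shows that $s$ lies in $\sum_\vell V_\vell$, i.e.\ in the image of $\sum_\vell i_\vell \colon V_\graded \to V$. The place where the real content sits is the setup in the first paragraph: without the upper bound $\vell^{\max}$ the poset $\Z^n$ admits no well-founded descent, so the mild technical condition~\eqref{mild technical}---which feeds into this via Proposition~\ref{p:lsj} and Lemma~\ref{l:not too big}---is doing essential work. Once the effective finiteness of the relevant interval of indices is in hand, the induction itself is purely formal.
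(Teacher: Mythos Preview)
Your proof is correct and follows essentially the same approach as the paper's: both use Lemma~\ref{l:not too big} to obtain an upper bound $\vell^{\max}$ (the paper calls its coordinates $b_j$), then induct on the height $\sum_j (L_j - \ell_j)$ (the paper's $\delta_s$ is exactly $h(\vell_s)$), using the splitting $F_{\vell} = V_{\vell} \oplus F_{>\vell}$ at each step. Your formulation of the inductive claim as $F_{\vell} \subseteq \sum_{\vell \le \vell' \le \vell^{\max}} V_{\vell'}$ is slightly cleaner than the paper's, which inducts directly over sections and organizes the remainder $s''$ into $n$ pieces indexed by coordinates, but the substance is the same.
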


\begin{proof}
For each $j \in \{ 1, \ldots , n \}$,
let $b_j$ be an integer upper bound to the set 
$\{ \ell_{s,j} \ | \ 
    0 \neq s \in \Gamma_\hol( X_\bfone, \frakL|_{X_\bfone} ) ) \}$,
where $\ell_{s,j}$ are the integers from Proposition~\ref{p:lsj};
such a bound exists by Lemma~\ref{l:not too big}.

We need to prove that every non-zero vector $s \in V$
is in the image of the map $V_\graded \to V$.
We argue by induction on the non-negative integer
$$\delta_s := (b_1+\ldots+b_n) - (\ell_{s,1}+\ldots+\ell_{s,n}).$$

Let $\delta$ be a non-negative integer.
Assume that, for every non-zero vector 
$s' \in \Gamma_\hol(X_\bfone,\frakL|_{X_\bfone})$,
if $\delta_{s'} < \delta$,
then $s'$ is in the image of $V_\graded \to V$. (If $\delta=0$, this assumption is vacuously true.) 
Let $s \in V$ be a non-zero vector with $\delta_s = \delta$.
We would like to show that $s$ is in the image of $V_\graded \to V$.

Let $\vell_s = (\ell_{s,1},\ldots,\ell_{s,n})$ be the $\ell$-vector of $s$.  
Then $s$ is in $F_{\vell_s}$. 
(See Definition~\ref{l-vector} and the criterion~\eqref{eq:Fl}.)

Because $V_{\vell_s}$ is a complementary subspace to $F_{>\vell_s}$
in $F_{\vell}$,
we can write $s = s_0 + s''$, 
where $s_0 \in V_{\ell_s}$ and $s'' \in F_{> \ell_s}$.

By the definition of $F_{> \ell_s}$,
we can write $s'' = s_1 + \ldots + s_n$ 
where $\ell_{s_i,j} \geq \ell_{s,j}$ for all $i,j$
and $\ell_{s_j,j} > \ell_{s,j}$ for all $j$.
So for each $j$ we have $\delta_{s_j} < \delta_s$,
and by the induction hypothesis $s_j$ is in the image 
of the map $V_\graded \to V$. 
But $s_0$, being in $V_{\ell_s}$, 
is also in the image of $V_\graded \to V$.
So $s = s_0 + s_1 + \ldots + s_n$, 
being a sum of elements in the image of $V_\graded \to V$,
is also in this image.
\end{proof}

\begin{Proposition} \labell{p:one-to-one}
Assume that the higher cohomology
of the sheaf of holomorphic sections of $\frakL$ vanishes.
Then the map $V_\graded \to V$ 
that is defined in Theorem~\ref{t:main} is one-to-one.
\end{Proposition}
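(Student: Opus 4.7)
The plan is to argue by contradiction. Suppose there is a finite nonempty set $S \subset \Z^n$ and nonzero $v_\vell \in V_\vell$ for $\vell \in S$ with $\sum_{\vell \in S} v_\vell = 0$ in $V$. I will fix $\vell^* \in S$ minimal in the product partial order and show $\Phi_{\vell^*}(v_{\vell^*}) = 0$. By Proposition~\ref{map to centre} (the step where the vanishing of higher cohomology is used) this forces $v_{\vell^*} \in F_{>\vell^*}$; since $V_{\vell^*}$ is complementary to $F_{>\vell^*}$ in $F_{\vell^*}$, we obtain $v_{\vell^*} = 0$, contradicting $\vell^* \in S$.

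For each $\vell \in S$, set $\tau_\vell := \overline{\bft^{-\vell}\tilde v_\vell} \in \Gamma_\hol(\frakX,\frakL)_\vell$, so that $\Phi_\vell(v_\vell) = \tau_\vell|_{X_\bfzero}$, and let $\vell_{\min} \in \Z^n$ be the componentwise minimum of $S$. Since $\vell - \vell_{\min} \geq \bfzero$ for every $\vell \in S$, each $\bft^{\vell - \vell_{\min}}\tau_\vell$ is a global holomorphic section of $\frakL$ over $\frakX$. Using $\tau_\vell = \bft^{-\vell}\tilde v_\vell$ on $\Xreg$, linearity of the sweep, and the assumption $\sum v_\vell = 0$, on $\Xreg$ we have
\[
 \sum_{\vell \in S}\bft^{\vell - \vell_{\min}}\tau_\vell
 \ = \ \bft^{-\vell_{\min}}\sum_{\vell \in S}\tilde v_\vell
 \ = \ 0,
\]
and by density this global holomorphic section vanishes identically on $\frakX$.

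To extract $\tau_{\vell^*}|_{X_\bfzero} = 0$, I take the $\bft^{\vell^* - \vell_{\min}}$-coefficient of this identity in a Taylor expansion around $X_\bfzero$. In a local chart and trivialization of $\frakL$ around any point of $X_\bfzero$, each $\tau_\vell$ becomes a holomorphic function $f_\vell(\bfx,\bft)$; the monomial $\bft^{\vell^* - \vell_{\min}}$ appears in $\bft^{\vell - \vell_{\min}} f_\vell(\bfx,\bft)$ only through the Taylor term of $f_\vell$ of multi-index $\gamma = \vell^* - \vell$, which requires $\vell \leq \vell^*$. By minimality of $\vell^*$ in $S$, the only $\vell \in S$ with $\vell \leq \vell^*$ is $\vell^*$ itself (so $\gamma = \bfzero$), and the coefficient equation therefore reduces to $f_{\vell^*}(\bfx,\bfzero) = 0$ pointwise, giving $\tau_{\vell^*}|_{X_\bfzero} = 0$. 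The main obstacle the plan has to navigate is that elements of $S$ incomparable to $\vell^*$ do not lie in $F_{\vell^*}$, so $\Phi_{\vell^*}$ is not a priori applicable to them term by term; the minimality of $\vell^*$ in the partial order, combined with the vanishing of $\bft^\alpha$ at $\bft=\bfzero$ for every nonzero $\alpha\geq\bfzero$, is exactly what makes the Taylor-coefficient extraction isolate a single surviving term.
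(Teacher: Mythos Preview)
Your argument is correct. The overall architecture matches the paper's: from a nontrivial relation $\sum_{\vell\in S} v_\vell=0$ with $v_\vell\in V_\vell$, produce some index whose ``sweep, twist, extend, restrict'' image on $X_\bfzero$ vanishes, invoke Proposition~\ref{map to centre} (this is where the vanishing of higher cohomology enters) to place that $v_\vell$ in $F_{>\vell}$, and conclude $v_\vell=0$ from $V_\vell\cap F_{>\vell}=\{0\}$.

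Where you differ is in how you single out that index. The paper packages this step into Lemma~\ref{one to one} of Appendix~\ref{app:indep}, proved by a double induction on the size of the index set and on the number of summands $m$, peeling off one coordinate direction at a time; it then runs a separate induction on $m$ in the body of the proof to kill all the $s_j$ one by one. You instead pick a minimal $\vell^*\in S$ for the product partial order and read off the $\bft^{\vell^*-\vell_{\min}}$ coefficient of the identity $\sum_{\vell\in S}\bft^{\vell-\vell_{\min}}\tau_\vell=0$ in a single Taylor expansion in the base variables, which immediately isolates $\tau_{\vell^*}|_{X_\bfzero}=0$ without any induction. This is cleaner and bypasses Appendix~\ref{app:indep} entirely; the paper's route has the virtue of stating a reusable standalone lemma, but your shortcut is a genuine simplification of the proof of this proposition.
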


\begin{proof}
First, we claim that, for any $m \in \N$,
if $\vell^{(1)}, \ldots, \vell^{(m)}$ are distinct,
$s_j \in F_{\vell^{(j)}}$ for each $j=1,\ldots,m$,
and $s_1 + \ldots + s_m = 0$,
then there exists at least one $j$ such that $s_j \in F_{> \vell^{(j)} }$.

Indeed,
let $\vell^{(1)},\ldots,\vell^{(m)}$ be distinct elements of $\Z^n$,
let $s_1,\ldots,s_m \in V = \Gamma_\hol(X_\bfone,L_\bfone)$,
and suppose that $s_j \in F_{\vell^{(j)}}$ for each $j$.
This means that, for each $j$, there exists 
$\sigma_j \in \Gamma_{\hol}(\frakX,\frakL)_{\vell^{(j)}}$
such that $\tilde{s}_j = \bft^{\vell^{(j)}} \sigma_j|_{\frakX_\reg}$.
Suppose that $s_1 + \ldots + s_m = 0$.
``Sweeping'', $\tilde{s}_1 + \ldots + \tilde{s}_m = 0$.
By Lemma~\ref{one to one} of Appendix~\ref{app:indep},
there exists $j$ such that $\sigma_j$ vanishes on $X_\bfzero$.
Since we are
assuming that $H^{\geq 1}(\frakX,\calO_{\frakL}) = \{ 0 \}$,
where $\calO_{\frakL}$ the sheaf of holomorphic sections of $\frakL$,
Proposition~\ref{map to centre}
implies that $s_j \in F_{> \vell^{(j)}}$.
This completes the proof of the claim.

A vector in $V_\graded$ can be written 
as $\left\{ s_{\vell} + F_{>\vell} \right\}_{\vell \in \Z^n}$,
where $s_{\vell} \in V_{\vell}$ for each $\vell$
and all but finitely many of the components $s_{\vell}$ are zero;
its image in $V$ is the sum $\sum s_{\vell}$.
We will prove, by induction on $m$, that for every such vector
in which there are at most $m$ non-vanishing components, 
if $\sum s_{\vell} = 0$ then $s_{\vell} = 0$ for all $\vell$.
For $m=0$, this is vacuously true.  Assume that it is true for $m-1$.
Let $\vell^{(1)}, \ldots, \vell^{(m)}$ be distinct elements of $\Z^n$,
let $s_i := s_{\vell^{(i)}} \in V_{\vell^{(i)}}$ for each $i=1,\ldots,m$,
and assume that $s_1 + \ldots + s_m = 0$.
By the claim, there exists at least one $j$ 
such that $s_j \in F_{> \vell^{(j)} }$.
But $s_j$ is in $V_{\vell^{(j)}}$,
which is complementary to $F_{> \vell^{(j)} }$ in $F_{\vell^{(j)}}$, 
so $s_j = 0$.
By the induction hypothesis applied to $\{ s_i \}_{i=1}^m \ssminus \{ s_j \}$,
we obtain that $s_i = 0$ for all $i$, as required.
Thus, the kernel of the map $V_\graded \to V$ is trivial,
and so the map is one-to-one.
\end{proof}

\begin{Remark}[Valuations] \labell{rk:valuations}
The filtration of the vector space 
$V = \Gamma_\hol(X_\bfone, \frakL|_{X_\bfone})$ 
into the spaces $F_{\vell}$ 
satisfies the properties of a prevaluation 
in the sense of Kaveh-Khovanskii \cite[Definition~2.1]{kaveh-khovanskii},
except that our indexing set is only partially ordered
and not totally ordered.
Namely, the function $s \mapsto \vell_s$,
where $\vell_s$ is as in Definition~\ref{l-vector},
has the following two properties.
For all $s_1,s_2$ such that $s_1,s_2,s_1+s_2$ are non-zero,
$\vell_{s_1+s_2} \geq \min ( \vell_{s_1} , \vell_{s_2} )$.
And, for every $s \in V \ssminus \{ 0 \}$ 
and $\lambda \in \C \ssminus \{0\}$, we have $\vell_{\lambda s} = \vell_s$. The study of valuations is  important in the theory of Newton-Okounkov bodies\cite{kaveh-khovanskii}. Several examples of valuations and Newton-Okounkov bodies for Bott Samelson manifolds occur in literature, including Kaveh \cite{kaveh} and  Harada-Yang \cite{harada-yang}. It would be interesting to  find connections between the notion of $\ell$-vector in our work (Definition \ref{l-vector}) and these known examples.

\end{Remark}

\section{Bott Samelson Equivariant Families}
\labell{sec:Bott-Samelson}

In this section, we construct equivariant families
in which the Bott-Samelson manifold is the generic fibre
and the corresponding Bott tower is the special fibre. We will use this construction in Section ~\ref{sec:canonical-basis} to make a connection to representation theory.

\subsection*{Line bundles over Bott-Samelson manifolds}\ 
\label{sec:bott}

We recall the setup from Section~\ref{sec:overview}.
Let $K$ be a compact connected Lie group.
Fix a maximal torus $T$ in $K$.  Let $G$ be the complexification of $K$,
and let $H$ be the Cartan subgroup, which is the complexification of $T$.
Consider the Lie algebra $\t =\Lie(T)$ of $T$
and the weight lattice $\t^*_{\Z}$ in the dual $\t^*$ of $\t$. 
Let $\Delta\subset\t^*_{\Z}$ be the set of roots of $K$ with respect to $T$.
Choose a Borel subgroup $B$ of $G$ that contains $T$.
Consider the resulting set $\Delta_+$ of positive roots in $\Delta$,
positive Weyl chamber $\t^*_+$ in $\t^*$, 
Bruhat (partial) order on $\t^*_{\Z}$,
and set $\{\alpha_1,\ldots,\alpha_r\}$
of simple positive roots in $\Delta_+$.
Consider the projection $\psi_H \colon B \to H$
whose kernel is the maximal unipotent subgroup $U$ of $B$.

For each simple positive root $\alpha_i$, let $P_{\alpha_i}$ be 
the corresponding minimal parabolic subgroup of~$G$.
Fix a sequence $\alpha_{i_1},\ldots,\alpha_{i_n}$ of simple positive roots.
In Section~\ref{sec:overview} we defined the Bott-Samelson manifold
$Z_{\alpha_{i_1},\ldots,\alpha_{i_n}}$
and the Bott tower $X_{\alpha_{i_1},\ldots,\alpha_{i_n}}$,
both obtained as quotients of 
$P_{\alpha_{i_1}} \times \cdots \times P_{\alpha_{i_n}}$
by $B^n$ actions where each factor of $B$ acts on two consecutive
parabolics (see \eqref{Zalpha} and \eqref{Xalpha}).
For any $n$-tuple $(\mu_1,\ldots,\mu_n)$
of elements of the weight lattice $\t^*_\Z$,
let $\C_{\mu_1,\dots, \mu_n}$ 
be the one-dimensional representation of $B^n$ 
that is obtained as the projection $B^n \to H^n$
followed by the $H^n$ action with weight $(\mu_1,\ldots,\mu_n)$.
Then we can consider the line bundles
\begin{equation} \labell{line bundles}
 L_{\mu_1,\ldots,\mu_n} \to Z_{\alpha_{i_1},\ldots,\alpha_{i_n}}
\text{\qquad and \qquad }
 L_{\mu_1,\ldots,\mu_n}^X \to X_{\alpha_{i_1},\ldots,\alpha_{i_n}} \ ,
\end{equation}
each defined by $ (P_{\alpha_{i_1}} \times \ldots \times P_{\alpha_{i_1}})
   \times_{B^n}  \C_{-\mu_1,\ldots,-\mu_n} $
with the appropriate
right $B^n$ action on the product of the parabolics.
These holomorphic line bundles are $B$ equivariant
with respect to the left $B$ action 
that is obtained from left multiplication on the $P_{\alpha_{i_1}}$ factor.
In the special case that $\mu_1 = \ldots \mu_{n-1} = 0$
and $\mu_n = \lambda$,
we obtain the line bundles $L_\lambda^Z$ and $L_\lambda^X$
from Section~\ref{sec:overview}
(see~\eqref{L lambda Z} and~\eqref{L lambda X}).

\begin{Remark} \labell{rk:LLM}
Lakshmibai-Littelmann-Magyar \cite{lakshmibai-littelmann-magyar}
considered, for any ${\bf{m}} = (m_1,\dots, m_n) \in \Z_{\ge 0}$,
the line bundle
\[ L_{\bf{m}} := L_{\mu_1,\ldots,\mu_n},\]
where $\mu_1 = m_1 \varpi_{i_1}$,\ $\dots$,\ $\mu_n = m_n \varpi_{i_n}$
are the corresponding integer multiples 
of the fundamental weights $\varpi_{i_1},\dots, \varpi_{i_n}$. 
The space of holomorphic sections of this line bundle
is isomorphic, as a $B$ module, to a so-called generalized Demazure module;
see \cite[Theorem~6]{lakshmibai-littelmann-magyar}.
A special case of a Demazure module is the restriction to $B$ 
of the irreducible representation $V_\lambda$ of~$G$ 
with maximal weight $\lambda$.
This representation can be modeled by the space of holomorphic sections 
$\Gamma_\hol(Z_{\alpha_{i_1},\ldots,\alpha_{i_n}}, L_{0,\ldots,0,\lambda})$
if the sequence of simple positive roots 
$\alpha_{i_1}, \ldots, \alpha_{i_n}$ 
corresponds to a reduced expression 
for the longest element of the Weyl group.
The representation $V_\lambda$ of $B$ can also be modeled
by the space of holomorphic sections of the line bundle $L_{\bf{m}}$,
where $m_k = \left< \lambda, \alpha_{i}^{\vee} \right>$
if $k$ is the largest index such that $i_k=i$,
and $m_k=0$ if there isn't such an $i$.
See Lakshmibai-Littelmann-Magyar \cite[page~294]{lakshmibai-littelmann-magyar}
and Lauritzen-Thomsen \cite[Section~3]{lauritzen-thomsen}.
\end{Remark}

\subsection*{Bott-Samelson equivariant families}\ 

Michael Grossberg \cite{grossberg:thesis,grossberg-karshon}
described a deformation of the $B^n$ action 
on $P_{\alpha_{i_1}} \times \ldots \times P_{\alpha_{i_n}}$
that gives an interpolation between the Bott-Samelson manifold
and the corresponding Bott tower.
We now fit this deformation into an equivariant family over $\C^n$
whose fibre over $\bfone = (1,\ldots,1)$ is the complex Bott-Samelson manifold
and whose special fibre is the corresponding Bott tower:

\begin{Theorem}
\labell{extension of toric action}
There is an $H$-equivariant family \[\frakL_{\mu_1,\ldots,\mu_n} \to \frakX_{\alpha_{i_1},\ldots,\alpha_{i_n}}  \to \C^n\]
(in the sense of \S\ref{sec:filtrations})
that satisfies the following properties.  (See the notations below.)

\begin{enumerate}
\item\labell{intertwining}
There exist isomorphisms of holomorphic line bundles
$$
\xymatrix{
 L_{\bfzero} \ar[d] \ar[rr] && L_{\mu_1,\ldots,\mu_n}^X \ar[d] \\
 X_{\bfzero} \ar[rr] && X_{\alpha_{i_1},\ldots,\alpha_{i_n}} \\
}
\qquad \text{ and } \qquad
\xymatrix{
 L_{\bfone} \ar[d] \ar[rr] && L_{\mu_1,\ldots,\mu_n} \ar[d] \\
 X_{\bfone} \ar[rr] && Z_{\alpha_{i_1},\ldots,\alpha_{i_n}} \\
}
$$
that intertwine the $H$ actions on $L_\bfzero$ and on $L_\bfone$ 
with the $H$ actions on $L_{\mu_1,\ldots,\mu_n}^X$
and on $L_{\mu_1,\ldots,\mu_n}$,
and the first of these also intertwines
the $(\Cx)_{\text{$j$th}}$ action on $X_\bfzero$ with the action
\begin{equation} \labell{jth}
\tau \cdot [p_1,\ldots,p_n] = 
              [p_1,\ldots,p_{j-1},S(\tau)p_j,p_{j+1},\ldots,p_n]
\end{equation}
on $X_{\alpha_{i_1},\ldots,\alpha_{i_n}}$.
 
\item\labell{mild tech} The family satisfies the mild technical condition  (\ref{mild technical}).

\item \labell{unique}
If $G$ is semisimple, (in particular, if $K$ is simply connected,) 
the construction of the family can be made to depend only on the 
choices of a maximal torus $T$ of $K$,
a Borel subgroup $B$ of $G$ that contains $T$,
and a reduced expression for the longest element of the Weyl group.
If $G$ is reductive, these choices determine the family 
up to tensoring the fibres of the line bundle 
by a one-dimensional representation of $(\Cx)^n$.
\end{enumerate}
\end{Theorem}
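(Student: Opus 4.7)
My plan is to realize the family by bundling Grossberg's one-parameter deformation from the complex Bott--Samelson manifold to the Bott tower into an $n$-parameter deformation over $\C^n$. First I fix a strictly dominant cocharacter $\eta \colon \Cx \to H$ -- for example $\eta = 2\rho^\vee$ -- so that $\alpha(\eta(t)) = t^{c_\alpha}$ with $c_\alpha \in \Z_{>0}$ for each positive root $\alpha$. Conjugation by $\eta(t)$ gives a map $\Phi_t \colon B \to B$ defined for $t \in \Cx$ by $\Phi_t(b) := \eta(t) b \eta(t)\inv$. Writing $b = h \exp(X)$ with $h \in H$ and $X = \sum_\alpha X_\alpha \in \Lie(U)$, the formula $\Phi_t(b) = h \exp\bigl(\sum_\alpha t^{c_\alpha} X_\alpha\bigr)$ extends holomorphically from $t \in \Cx$ to $t \in \C$, with $\Phi_0(b) = \psi_H(b)$; each $\Phi_t$ is a group homomorphism.

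I then set
\[ \frakX_{\alpha_{i_1},\ldots,\alpha_{i_n}} := \bigl(P_{\alpha_{i_1}} \times \cdots \times P_{\alpha_{i_n}} \times \C^n\bigr)\bigm/ B^n \]
with right $B^n$-action
\[ (p_1,\ldots,p_n,\bft)\cdot(b_1,\ldots,b_n) := \bigl(p_1 b_1,\ \Phi_{t_2}(b_1)\inv p_2 b_2,\ \ldots,\ \Phi_{t_n}(b_{n-1})\inv p_n b_n,\ \bft\bigr), \]
and $\frakL_{\mu_1,\ldots,\mu_n}$ as the analogous quotient of $P_{\alpha_{i_1}} \times \cdots \times P_{\alpha_{i_n}} \times \C^n \times \C_{-\mu_1,\ldots,-\mu_n}$ with the additional action $(b_1,\ldots,b_n) \cdot z = b_1^{\mu_1}\cdots b_n^{\mu_n} z$ on the last factor. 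I take $\pi$ to be the projection to $\C^n$, the $(\Cx)^n$-action to be
\[ (\tau_1,\ldots,\tau_n) \cdot [p_1,\ldots,p_n,\bft] := [\eta(\tau_1) p_1,\ \ldots,\ \eta(\tau_n) p_n,\ \tau_1 t_1,\ \ldots,\ \tau_n t_n], \]
and the $H$-action to be left multiplication on $p_1$.

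The verifications are then largely routine. The right $B^n$-action is free (by induction starting from the free right action of $B$ on $P_{\alpha_{i_1}}$) and proper (because each $P_{\alpha_{i_j}}/B \cong \PP^1$ is compact), so $\frakX$ is a complex manifold, $\pi$ is a proper holomorphic submersion with compact fibers that are iterated $\PP^1$-bundles, and $\frakL$ is a holomorphic line bundle. The $(\Cx)^n$-action descends because $\eta(\tau_j)\Phi_{t_j}(b)\eta(\tau_j)\inv = \Phi_{\tau_j t_j}(b)$, immediate from $\eta$ being a one-parameter subgroup, and the $H$-action descends because left multiplication on $p_1$ commutes with the right $B^n$-action; the two commute by construction. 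For property~\eqref{intertwining}, specializing $\bft = \bfone$ gives $\Phi_1 = \mathrm{id}_B$ and recovers $Z_{\alpha_{i_1},\ldots,\alpha_{i_n}}$ with line bundle $L_{\mu_1,\ldots,\mu_n}$, while $\bft = \bfzero$ gives $\Phi_0 = \psi_H$ and recovers the Bott tower $X_{\alpha_{i_1},\ldots,\alpha_{i_n}}$ with line bundle $L^X_{\mu_1,\ldots,\mu_n}$; on the special fiber the $j$-th factor of $(\Cx)^n$ acts by left multiplication by $\eta(\tau_j)$ on $p_j$, matching \eqref{jth} with $S = \eta$. For property~\eqref{mild tech}, the Bott tower is a smooth projective toric variety with a Hamiltonian $(S^1)^n$-action, so Remark~\ref{technical condition projective} applies.

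The most delicate step is \eqref{unique}. The construction depends on the auxiliary cocharacter $\eta$, but modifying $\eta$ by a cocharacter valued in the centre $Z(G) \subseteq H$ leaves the conjugation $\Phi_t$ unchanged, so $\frakX$ and the $B^n$-action on $\frakL$ are unaffected; only the lift of the $(\Cx)^n$-action to $\frakL$ changes, and this change is by a character of $(\Cx)^n$ determined by $\mu$ and the central cocharacter. In the semisimple case, $\eta$ can be canonically fixed (e.g.\ to $2\rho^\vee$) so that the entire family depends only on $(T,B,\text{reduced expression})$. In the reductive case the residual ambiguity manifests as tensoring the line bundle fibrewise by a one-dimensional $(\Cx)^n$-representation, as asserted in the theorem. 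Making this ambiguity precise -- and verifying that any two valid choices of $\eta$ really produce $H$-equivariantly isomorphic families up to such a twist -- is where I expect the real work of the proof to lie.
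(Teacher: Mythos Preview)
Your construction is essentially identical to the paper's: they write $S$ for your $\eta$ and $\psi_t$ for your $\Phi_t$, build the same quotient, and verify that the left $(\Cx)^n$-action and right $B^n$-action commute via the same conjugation identity $S(\tau)\psi_t(b)S(\tau)^{-1}=\psi_{\tau t}(b)$. The only differences are that the paper verifies the mild technical condition by a direct computation of the isotropy weights at the fixed point $[e,\ldots,e]$ (rather than invoking projectivity of the Bott tower), and handles part~\eqref{unique} by fixing $S$ to pair with every simple root to the same smallest possible positive integer $q$---in the semisimple case this pins down $S$ uniquely, since any two such choices differ by a cocharacter of the (discrete) centre---so the ``real work'' you anticipate there is in fact no harder than what you already sketched.
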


We prove Theorem \ref{extension of toric action}
at the end of this section.

In Theorem \ref{extension of toric action},  we used the following notations. The special fibre $X_{\bfzero}$ is the preimage in  $\frakX_{\alpha_{i_1},\ldots,\alpha_{i_n}}$
of the origin $\bfzero := (0,\ldots,0)$ of $\C^n$,
and the fibre $X_{\bfone}$ is the preimage in $\frakX_{\alpha_{i_1},\ldots,\alpha_{i_n}}$ 
of the point $\bfone := (1,\ldots,1)$ of $\C^n$.
The line bundle $L_{\bfzero}$ over $X_{\bfzero}$
 is the preimage of $\bfzero$ in $\frakL_{\mu_1,\ldots,\mu_n}$,
and the line bundle $L_{\bfone}$ over $X_{\bfone}$ is the preimage of $\bfone$ in $\frakL_{\mu_1,\ldots,\mu_n}$.
Note that the $H \times (\Cx)^n$ action on the equivariant family
restricts to an action on the line bundle $L_\bfzero \to X_\bfzero$
and that the $H$ action on the equivariant family
restricts to an action on the line bundle $L_\bfone \to X_\bfone$.
The line bundles $L_{\mu_1,\ldots,\mu_n}^X$ 
and $L_{\mu_1,\ldots,\mu_n}$
are as in \eqref{line bundles}.

\begin{Lemma} (\cite[\S 1]{pasquier})
\labell{one-parameter subgroup existence}
There exist a one-parameter subgroup of the Cartan subgroup,
$$ S \colon \Cx \to H ,$$
and a positive integer $q$,
such that, for any simple positive root $\alpha_i$,
the composition of $ S \colon \C^{\times} \rightarrow H $
with the homomorphism $H \to \Cx$ that is represented by $\alpha_i$ 
is $t \mapsto t^{q}$.
\end{Lemma}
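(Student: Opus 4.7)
The plan is to translate the assertion into a statement about cocharacters and then pair cocharacters against the simple roots. Let $\t_\Z \subset \t = \Lie(H)$ denote the cocharacter lattice $\on{Hom}(\Cx,H)$, where each holomorphic homomorphism $\mu \colon \Cx \to H$ is identified with $\mu_*(1) \in \t$. There is the perfect $\Z$-bilinear pairing $\langle\cdot,\cdot\rangle \colon \t^*_\Z \times \t_\Z \to \Z$ characterized by $\chi \circ \mu \colon t \mapsto t^{\langle \chi, \mu\rangle}$. Under this dictionary, the lemma asks for an element $S \in \t_\Z$ and a positive integer $q$ such that
\[ \langle \alpha_i, S\rangle = q \quad \text{for every simple positive root } \alpha_i. \]

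First I would invoke the standard fact that the simple positive roots $\alpha_1,\dots,\alpha_r$ are linearly independent in $\t^*$; this is part of the definition of a base of a root system and holds whether or not $G$ is semisimple. Consequently, the $\Q$-linear evaluation map
\[ \t_\Q := \t_\Z \otimes_\Z \Q \;\longrightarrow\; \Q^r, \qquad X \longmapsto \bigl(\langle \alpha_1, X\rangle, \dots, \langle \alpha_r, X\rangle\bigr) \]
is surjective. Solving a rational linear system, I pick $X_0 \in \t_\Q$ with $\langle \alpha_i, X_0\rangle = 1$ for every $i$.

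Second, I clear denominators: since $X_0 \in \t_\Z \otimes_\Z \Q$, there exists a positive integer $q$ such that $S := q X_0$ lies in $\t_\Z$. This $S$ is a genuine one-parameter subgroup $\Cx \to H$, and for each simple positive root $\alpha_i$,
\[ \langle \alpha_i, S\rangle = q \langle \alpha_i, X_0\rangle = q, \]
so $\alpha_i \circ S$ equals $t \mapsto t^q$, as required.

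I do not foresee any real obstacle: the argument is a short exercise in rational linear algebra followed by clearing denominators. The one point worth naming explicitly is the linear independence of the simple roots in the reductive (rather than semisimple) case — but this is automatic from the definition of a simple system, since the simple roots are chosen precisely so that every positive root admits a unique expression as a nonnegative integer combination of them. Note that in the semisimple case the $\alpha_i$ span $\t^*$, so $X_0$ (and hence $S$ up to a positive integer factor) is unique; in the reductive case there is additional freedom in $X_0$ along $\bigcap_i \ker \alpha_i$, but this does not affect existence.
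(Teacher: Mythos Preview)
Your proof is correct and follows essentially the same approach as the paper's: both arguments use the linear independence of the simple roots to solve rationally for a cocharacter pairing to~$1$ with each $\alpha_i$, then clear denominators to land in $\t_\Z$. Your closing remark about the extra freedom in the reductive case also anticipates the paper's later discussion.
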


\begin{proof}
Let $\t^*_\Q$ be the rational span of the weight lattice $\t^*_\Z$,
and let $\t_\Q$ be the rational span of the dual lattice $\t_\Z$.
Note that $\t_\Q = \bigcup\limits_{q \in \N} \frac{1}{q} \t_\Z$.
Because the simple positive roots $\alpha_1,\ldots,\alpha_r$
are linearly independent and are (in $\t^*_\Z$, hence) in $\t^*_\Q$,
for every $r$-tuple $q_1',\ldots,q_r'$ of rational numbers
there exists an element $\eta$ of $\t_\Q$ whose pairing with each $\alpha_i$ is $q_i'$.
Take ${q_1'=\ldots=q_r'=1}$, let $q$ be a positive integer such that
the corresponding $\eta$ is in $\frac{1}{q} \t_\Z$,
and take $S \colon \Cx \to H$ to be the one-parameter subgroup
that is determined by the element $q \eta$ of $\t_\Z$.
Then $S$ and $q$ are as required.
\end{proof}

We use the first part of the following lemma
to construct the Bott-Samelson family;
we use the second part to explain, in Remark~\ref{different S},
the extent that the Bott-Samelson family depends on the choice of $S$;
we use the third part to prove, in Lemma~\ref{technical assumption holds},
that the Bott-Samelson family satisfies 
the mild technical condition~\eqref{mild technical}.

\begin{Lemma} \labell{one-parameter subgroup properties}
Let $S \colon \Cx \to H$ be a one-parameter subgroup of the Cartan subgroup
such that, for every simple positive root $\alpha_i$,
the composition of $ S \colon \C^{\times} \rightarrow H $
with the homomorphism $H \to \Cx$ that is represented by $\alpha_i$ 
has the form $t \mapsto t^{q_i}$ for some positive integer~$q_i$.
Then the following is true.

\begin{itemize}[itemsep=6pt]
\item
The map $(t,b) \mapsto \psi_t(b)$ from $\C \times B$ to $B$ that is given by 
\begin{equation*} 
 \begin{array}{ll}
   \psi_{t}(b) & := S(t) b S(t)\inv \text{ if } t\ne 0, \text{ and } \\
   \psi_0 & := \psi_H , 
                \text{ the projection $B \to H$ with kernel $U$},
\end{array} 
\end{equation*}
is holomorphic.

\item 
If $\wh{S} \colon \Cx \to H$ is another one-parameter subgroup,
with the same properties as $S$ for the same positive integers $q_i$,
then $\wh{S}$ and $S$ differ by a one-parameter subgroup of the centre of $G$.  
Consequently, 
$S$ and $\wh{S}$ give the same map $\psi_t$.

\item
For every simple positive root $\alpha_i$,
let $P_{\alpha_i}$ be the corresponding minimal parabolic subgroup of~$G$,
and let $\Cx$ act on $P_{\alpha}/B$ by 
$\tau \colon [p] \mapsto [S(\tau)p]$.
Then the isotropy weight for the linearized action
on the tangent space at the identity coset $[e]$ is $-q_i$.

\end{itemize}
\end{Lemma}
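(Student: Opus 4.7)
For the first bullet, the plan is to factor $B = H \ltimes U$ and use the exponential parameterization $\exp \colon \mathfrak{u} \to U$, which is a biholomorphism because $U$ is unipotent. Any $b \in B$ writes uniquely as $b = h \exp(X)$ with $h \in H$ and $X \in \mathfrak{u}$, and since $H$ is abelian and $S(t) \in H$, conjugation gives $\psi_t(b) = h \cdot \exp(\mathrm{Ad}(S(t)) X)$ for $t \neq 0$. Decomposing $X = \sum_{\alpha \in \Delta_+} X_\alpha$ with $X_\alpha \in \g_\alpha$ and expanding each positive root as $\alpha = \sum_i n_i^\alpha \alpha_i$ with $n_i^\alpha \in \Z_{\ge 0}$, the hypothesis $\alpha_i \circ S(t) = t^{q_i}$ gives $\mathrm{Ad}(S(t)) X = \sum_\alpha t^{\sum_i n_i^\alpha q_i} X_\alpha$, with every exponent strictly positive. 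This expression is jointly holomorphic in $(t, X)$ and extends to $t=0$ by $\mathrm{Ad}(S(0)) X := 0$, and so the formula $(t,h,X) \mapsto h \exp(\mathrm{Ad}(S(t)) X)$ defines a holomorphic map $\C \times H \times \mathfrak{u} \to B$. Its value at $t = 0$ is $h \exp(0) = h = \psi_H(h \exp X)$, matching $\psi_0$, so transporting via the biholomorphism $H \times \mathfrak{u} \xrightarrow{\sim} B$ gives the holomorphicity of $(t,b) \mapsto \psi_t(b)$.

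For the second bullet, set $c := \widehat{S} \cdot S^{-1} \colon \Cx \to H$. By hypothesis, for each simple positive root $\alpha_i$ the composite $\alpha_i \circ c$ is the trivial character. Every root is a $\Z$-combination of simple roots, so $\alpha \circ c$ is trivial for \emph{every} root $\alpha$, which means $c(\tau)$ acts trivially on every root space under the adjoint action and hence lies in the centre $Z(G)$. Thus $\widehat{S}$ and $S$ differ by a one-parameter subgroup of $Z(G)$. Since $c(t)$ commutes with every element of $G$, $\widehat{S}(t) b \widehat{S}(t)^{-1} = S(t) c(t) b c(t)^{-1} S(t)^{-1} = S(t) b S(t)^{-1}$, so the two subgroups produce the same $\psi_t$.

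For the third bullet, the identity coset $[e] \in P_{\alpha_i}/B$ is fixed by the action since $S(\tau) \in H \subset B$. The tangent space $T_{[e]} (P_{\alpha_i}/B)$ is canonically identified with $\mathrm{Lie}(P_{\alpha_i})/\mathrm{Lie}(B) \cong \g_{-\alpha_i}$, since $P_{\alpha_i}$ is generated by $B$ together with the root subgroup $U_{-\alpha_i}$. For $X \in \g_{-\alpha_i}$ and small $s$, the curve $[\exp(sX)]$ represents $X$; applying the action and conjugating $S(\tau)$ to the right through $B$ gives
\[
\tau \cdot [\exp(sX)] = [S(\tau) \exp(sX) S(\tau)^{-1}] = [\exp(s \, \mathrm{Ad}(S(\tau)) X)] .
\]
Since $\mathrm{Ad}(S(\tau))$ acts on $\g_{-\alpha_i}$ by the character $-\alpha_i \circ S \colon \tau \mapsto \tau^{-q_i}$, the isotropy weight is $-q_i$.

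The main obstacle is the first bullet, specifically matching the holomorphic limit of the conjugation formula at $t = 0$ with the definition $\psi_0 = \psi_H$; everything hinges on the observation that on positive root spaces the conjugation character $\alpha \circ S$ is a \emph{positive} power of $t$, which forces $\mathrm{Ad}(S(t))$ to vanish on $\mathfrak{u}$ in the limit. The other two parts are essentially formal consequences of the fact that simple roots are a $\Z$-basis of the root lattice, together with the standard description of the tangent space to $P_{\alpha_i}/B$.
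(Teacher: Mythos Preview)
Your proof is correct and follows essentially the same approach as the paper: for all three bullets the key computations are identical (positivity of the exponents $\sum_i n_i^\alpha q_i$ on positive root spaces, the characterization of the centre as $\bigcap_i \ker\alpha_i$ in $H$, and the identification $T_{[e]}(P_{\alpha_i}/B)\cong\g_{-\alpha_i}$). The one notable difference is in the first bullet: you parameterize $B$ via the global biholomorphism $H\times\mathfrak{u}\to B$, $(h,X)\mapsto h\exp(X)$, whereas the paper works on all of $\b$ and uses that $\exp\colon\b\to B$ is a covering map of complex manifolds; your route is slightly cleaner since it avoids the (mild) covering-map argument, while the paper's version treats the $\h$- and $\mathfrak{u}$-parts uniformly at the Lie algebra level.
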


\begin{proof}
Let $\b$ be the Lie algebra of the Borel subgroup $B$.
For each $t \in \C$, define $\wt{\psi}_t \colon \b \to \b$ as follows.
Write each element $\eta$ of $\b$
as $\eta = \eta_\h + \sum_{\alpha \in \Delta_+} \eta_\alpha$
with $\eta_\h \in \h$ and $\eta_\alpha \in \g_{\alpha}$,
and define $\wt{\psi}_t(\eta) = \eta_\h + \sum_\alpha t^{q(\alpha)} \eta_\alpha$
where $q(\alpha) = m_1 q_1 + \ldots + m_r q_r$
if $\alpha = m_1 \alpha_1 + \ldots + m_r \alpha_r$.
Because every positive root $\alpha$ is a linear combination of the 
simple positive roots $\alpha_i$
with non-negative coefficients that are not all zero,
the exponents $q(\alpha)$ are all positive,
and the map $(t,\eta) \mapsto \wt{\psi}_t(\eta)$ from $\C \times \b$ to $\b$
is holomorphic.  Holomorphicity of the map $(t,b) \mapsto \psi_t(b)$
then follows from the commuting diagram 
$$
\xymatrix{
 \C \times \b \ar[rr]^{ \quad (t,\eta) \mapsto \wt{\psi}_t(\eta) } 
              \ar[d]^{\text{Id} \times \exp} 
 && \b \ar[d]^{\exp} \\ 
 \C \times B \ar[rr]^{ \quad (t,b) \mapsto \psi_t(b) } && B 
}
$$
and from the exponential map $\exp \colon \b \to B$ 
being a covering map of complex manifolds.

follows from the fact that the centre of a connected reductive complex Lie group
is the intersection of the kernels of the homomorphisms $H \to \Cx$
that correspond to the simple positive roots $\alpha_i$. (See ~\cite[Theorem~5.20]{kamnitzer}.)

The last part of the lemma follows from the $T$-equivariant isomorphisms
\begin{equation} \labell{g -alpha}
T_{[e]} (P_{\alpha_i}/B) \cong \g_{-\alpha_i}.
\end{equation}
These, in turn, follow from our convention 
for the definitions of $B$ and $P_{\alpha_i}$:
the Lie algebra of $B$ is 
$\h \oplus \bigoplus_{\alpha \in \Delta_+} \g_{\alpha}$
and the Lie algebra of $P_{\alpha_i}$ is
$\h \oplus \bigoplus_{\alpha \in \Delta_+} \g_{\alpha} \oplus \g_{-\alpha_i}$,
where $\h$ is the Lie algebra of the Cartan subgroup,
$\Delta_+$ is the set of positive roots,
and, for each root $\alpha$, 
the space $\g_{\alpha}$ is the corresponding root space.
\end{proof}

\begin{Construction} \labell{actions}
Fix a one parameter subgroup 
$$ S \colon \Cx \to H $$
and positive integers $q_1,\ldots,q_r$
as in Lemma~\ref{one-parameter subgroup properties}.
(By Lemma~\ref{one-parameter subgroup existence},
there exist such $S$ and $q_i$ with $q_1=\ldots=q_r$.)
Let $\psi_t \colon B \to B$ be 
as in Lemma~\ref{one-parameter subgroup properties}.
Fix a sequence of simple positive roots, 
$\alpha_{i_1}$, $\ldots$, $\alpha_{i_n}$
and an $n$-tuple $(\mu_1,\ldots,\mu_n)$
of elements of the weight lattice $\t_\Z^*$.
On $\C^{n}\times P_{\alpha_{i_1}}\times\cdots\times P_{\alpha_{i_n}} 
 \times \C_{-\mu_1,\ldots,-\mu_n}$,
we define a right $B^{n}$ action by 
\begin{multline}\labell{B action}
(t_1, \, \dots, \, t_{n}; \, p_1,\, \dots, \, p_n; \, z)
     \cdot (b_1,\, \dots, \, b_n ) \\
   = (t_1, \, \dots, \, t_{n}; \,
     p_1b_1, \, \psi_{t_2}(b_1)\inv p_2 b_2, \, \psi_{t_3}(b_2)\inv p_3 b_3, \,
             \dots, \, \psi_{t_{n}}(b_{n-1})\inv p_n b_n;
     \, (b_1,\ldots,b_n)^{-1} \cdot z)
\end{multline}
(with $(b_1,\ldots,b_n)^{-1} \cdot z = b_1^{\mu_1} \cdots b_n^{\mu_n} z )$),
and we define a left $H \times (\C^{\times})^{n}$ action by 
\begin{multline}\labell{H action}
( h, \, \tau_1, \, \dots, \, \tau_{n} ) \cdot 
    (t_1, \, \dots, \, t_{n}, \, p_1, \, \dots, \, p_n ; \, z)  \\
  = (\tau_1t_1, \, \dots, \, \tau_{n}t_{n}; \,
 h S(\tau_1) p_1, \, S(\tau_2)p_2, \, \dots, \, 
                           S(\tau_{n})p_n ; \, z).
\end{multline}
\end{Construction}

\begin{lemma} \labell{lem:commuting actions}
The right $B^n$ action \eqref{B action} and left $H\times (\C^{\times})^{n}$ action \eqref{H action}
commute with each other.
Moreover, the right $B^n$ action~\eqref{B action},
and the corresponding right $B^n$ action on
$\C^{n}\times P_{\alpha_{i_1}}\times\cdots\times P_{\alpha_{i_n}}$,
are free and proper. 
\end{lemma}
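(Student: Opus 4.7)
I would verify the three assertions in turn: commutativity, freeness, and properness. The main (minor) obstacle is the $t_k=0$ subcase of commutativity, where the formula $\psi_t(b)=S(t)bS(t)^{-1}$ is not available and one must instead invoke the abelianness of $H$.

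For commutativity, I would compute the two composite actions coordinate by coordinate. The $\C^n$ and $z$ components are immediate (the left action is trivial on $z$ and the right action is trivial on $\vec t$). On the $p_k$-component for $k\ge 2$, matching the two orders of composition reduces to the identity
\[
 \psi_{\tau t}(b) \;=\; S(\tau)\,\psi_t(b)\,S(\tau)^{-1},
\]
which for $t\ne 0$ follows at once from $S\colon\Cx\to H$ being a group homomorphism. For $t=0$ both sides equal $\psi_H(b)$, and the required identity $\psi_H(b)\,S(\tau)=S(\tau)\,\psi_H(b)$ holds because $\psi_H(b)$ and $S(\tau)$ both lie in the abelian Cartan subgroup $H$.

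Freeness follows by induction on $k$: if $(t,\vec p,z)\cdot(b_1,\ldots,b_n)=(t,\vec p,z)$, then $p_1 b_1 = p_1$ forces $b_1 = e$; this makes $\psi_{t_2}(b_1) = e$, so $p_2 b_2 = p_2$ gives $b_2 = e$, and so on. The argument never touches the $z$-coordinate, so it applies simultaneously to both spaces named in the lemma.

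For properness, write $X$ for $\C^n \times P_{\alpha_{i_1}} \times \cdots \times P_{\alpha_{i_n}} \times \C_{-\mu_1,\ldots,-\mu_n}$ and consider the graph map $\Phi \colon X \times B^n \to X \times X$, $(x,\vec b) \mapsto (x, x\cdot \vec b)$. The freeness proof upgrades to explicit continuous formulas for $\vec b$ in terms of $(x, x\cdot\vec b)$: if $q_k$ denotes the $k$-th $P$-coordinate of $x\cdot\vec b$, then $b_1 = p_1^{-1} q_1$ and, inductively for $k\ge 2$, $b_k = p_k^{-1}\,\psi_{t_k}(b_{k-1})\, q_k$. These are continuous thanks to the joint continuity of $(t,b)\mapsto\psi_t(b)$ from Lemma~\ref{one-parameter subgroup properties}. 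Hence $\Phi$ is a continuous injection with a continuous inverse on its image; a standard limiting argument (if $\Phi(x_m,\vec b_m)\to(x,y)$, the inverse formulas force $\vec b_m\to\vec b$ with $y = x\cdot\vec b$) shows that the image is closed in $X\times X$, so $\Phi$ is a closed topological embedding, which is exactly properness of the action. The same calculation with the $z$-coordinate suppressed gives properness on $\C^n \times P_{\alpha_{i_1}} \times \cdots \times P_{\alpha_{i_n}}$.
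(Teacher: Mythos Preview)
Your proof is correct and follows essentially the same plan as the paper. The one notable difference is in the commutativity step: you verify the key identity $\psi_{\tau t}(b)=S(\tau)\psi_t(b)S(\tau)^{-1}$ separately at $t=0$ using abelianness of $H$, whereas the paper checks commutativity only on the dense set $\{t_1\cdots t_n\neq 0\}$ (where $\psi_t(b)=S(t)bS(t)^{-1}$ is available) and then invokes continuity of both actions to pass to all of $\C^n$. Either works; the continuity argument is a touch slicker, while yours makes explicit what is actually happening at the boundary. For freeness and properness the paper says only ``can be proved by induction on $n$'', so your explicit inverse formulas $b_k=p_k^{-1}\psi_{t_k}(b_{k-1})q_k$ and the closed-embedding argument fill in details the paper omits.
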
 

\begin{proof} We need to show that 
\begin{multline*}
(h, \tau_1,\dots, \tau_{n}) \cdot 
  ((t_1,\dots, t_{n}; p_1,\dots, p_n; z) 
      \cdot (b_1,\dots, b_n)) \\
 = ((h, \tau_1,\dots, \tau_{n}) \cdot (t_1,\dots, t_{n}; p_1,\dots, p_n; z)) 
 \cdot (b_1,\dots, b_n) ,
\end{multline*}
where $(t_1,\dots, t_{n};p_1,\dots, p_n; z) 
 \in \C^{n} \times P_{\alpha_{i_1}}\times\cdots \times P_{\alpha_{i_n}} 
 \times \C_{-\mu_1,\ldots,-\mu_n}$, 
where $(h, \tau_1,\dots, \tau_{n})\in H\times (\C^{\times})^{n}$, 
and where $(b_1,\dots, b_n) \in B^n$. 
By continuity, it is enough to show this when $t_1,\ldots,t_n$ are non-zero,
which we now assume.  We now compute:
\begin{multline*}
 (h, \, \tau_1, \, \dots, \, \tau_{n}) \cdot 
  ((t_1, \, \dots, \, t_{n}; \, p_1, \, \dots, \, p_n; \, z) 
    \cdot (b_1, \, \dots, \, b_n)) \\
 = (h, \, \tau_1, \, \dots, \, \tau_{n}) \cdot  
  (t_1, \, \dots, \, t_{n}; \hfill \\ 
  \, p_1b_1, \, \psi_{t_2}(b_1)\inv p_2 b_2, \,
                             \psi_{t_3}(b_2)\inv p_3 b_3, \,
   \dots, \, \psi_{t_{n}}(b_{n-1})\inv p_n b_n; 
    (b_1,\ldots,b_n)^{-1} \cdot z) \\ 
 = (\tau_1t_1, \, \dots,\,  \tau_{n}t_{n}; \hfill \\
    \, h S(\tau_1) p_1b_1, \,
   S(\tau_2) \psi_{t_2}(b_1)\inv p_2 b_2, \,
   \dots, \,
   S(\tau_{n})\psi_{t_{n}}(b_{n-1})\inv p_n b_n;
    (b_1,\ldots,b_n)^{-1} \cdot z) \\ 
 = (\tau_1t_1, \, \dots, \, \tau_{n}t_{n}; \hfill \\ \,
    h S(\tau_1) p_1b_1, \, 
    (S(\tau_2t_2) b_1 S(\tau_2t_2) \inv)\inv 
                     S(\tau_2) p_2 b_2,  \, \dots, \, 
   (S(\tau_{n}t_{n})b_{n-1} S(\tau_{n}t_{n})\inv)\inv 
                S(\tau_{n})p_n b_n; \\ \hfill (b_1,\ldots,b_n)^{-1} \cdot z) \\ 
 = (\tau_1t_1, \, \dots, \, \tau_{n}t_{n}; \,
    h S(\tau_1) p_1, \, S(\tau_2)p_2, \, \dots, \,
          S(\tau_{n}) p_n; z) \cdot (b_1,\dots,b_n) \\
 = \left((h, \, \tau_1, \, \dots, \, \tau_{n}) 
       \cdot (t_1, \, \dots, \, t_{n}; \, p_1, \, \dots, \, p_n; \, z)\right) 
         \cdot (b_1, \, \dots, \, b_n).
\end{multline*}
The $B^n$ actions being free and proper can be proved by induction on $n$.
\end{proof}

\begin{Construction}[Bott-Samelson family] \labell{construction}
We define $\frakX_{\alpha_{i_1},\ldots,\alpha_{i_n}}$ 
and $\frakL_{\mu_1,\ldots,\mu_n}$ as the quotient spaces 
of $\C^{n}\times P_{\alpha_{i_1}}\times\cdots\times P_{\alpha_{i_n}}
 \times \C_{-\mu_1,\ldots,-\mu_n} $ 
and $\C^{n}\times P_{\alpha_{i_1}}\times\cdots\times P_{\alpha_{i_n}}$
by their free and proper right $B^n$ actions.
Then $\frakL_{\mu_1,\ldots,\mu_n}$ is a holomorphic line bundle over $\frakX_{\alpha_{i_1},\ldots,\alpha_{i_n}}$.
Lemma \ref{lem:commuting actions} implies that
the left $H\times (\C^{\times})^n$-action descends to $\frakX_{\alpha_{i_1},\ldots,\alpha_{i_n}}$ and $\frakL_{\mu_1,\ldots,\mu_n}$.
This yields an $H$-equivariant family over $\C^n$ with an $H$-action
in the sense of Section~\ref{sec:filtrations}.
\end{Construction}

\begin{Remark} \labell{compare with Lu}
Jianghua Lu, with her student Jun Peng \cite{lu-slides, JunPeng}, 
introduced families of deformations of  Bott-Samelson manifolds that are similar to ours. They define the $B^n$ action using $n$ one-parameter subgroups $S_1,\ldots,S_n$; we define it using one one-parameter subgroup $S_1= \ldots = S_n = S$. Here, $n$ is the dimension of the Bott-Samelson manifold.
\end{Remark}

\begin{Remark} \labell{different S}
If $c_1,\ldots,c_n$ are elements of the centre of $G$
(in particular they are contained in $H$) then, in $\frakL_{\mu_1,\ldots,\mu_n}$, we have
\begin{equation*} 
   [t_1, \, \ldots, \, t_n;\, c_1p_1, \, \ldots, \, c_np_n; \, z]
 = [t_1, \, \ldots, \, t_n;\, p_1, \, \ldots, \, p_n; \, 
    {c_1}^{\mu_1} (c_1c_2)^{\mu_2} \cdots (c_1 \cdots c_n)^{\mu_n} z] .
\end{equation*}
This equality, together with the second part 
of Lemma~\ref{one-parameter subgroup properties},
has the following consequences.
If we replace the one-parameter subgroup $S$
by another one-parameter subgroup $\wh{S}$
that has the same properties as $S$ with the same positive integers $q_i$, 
then the resulting spaces $ \frakX_{\alpha_{i_1},\ldots,\alpha_{i_n}}$ and $\frakL_{\mu_1,\ldots,\mu_n}$ are the same as for $S$,
and so is the left action of $H \times (\Cx)^n$ on $ \frakX_{\alpha_{i_1},\ldots,\alpha_{i_n}}$.
But the 
liftings of this left action to 
$\frakL_{\alpha_{i_1},\ldots,\alpha_{i_n}}$
that are obtained from $S$ and from $\wh{S}$ do not coincide. 
If $c \colon \Cx \to H$ is the one-parameter subgroup of the centre of $G$
such that for all $\tau \in \Cx$ we have $\wh{S}(\tau) = c (\tau) S(\tau)$,
then, 
for any element $(h,\, \tau_1,\ \ldots,\, \tau_n)$ of $H \times (\Cx)^n$,
its left action on $\frakL_{\mu_1,\ldots,\mu_n}$ that is obtained from $\wh{S}$
is equal to its left action on $\frakL_{\mu_1,\ldots,\mu_n}$ that is obtained from $S$
times fibrewise multiplication by the scalar
$c(\tau_1)^{\mu_1} \, c(\tau_1\tau_2)^{\mu_2} \, \ldots \,
 c(\tau_1\cdots\tau_n)^{\mu_n}$.
That is, the two left actions differ by a fibrewise $(\Cx)^n$ action
with weight $\vec{m} := (m_1,\ldots,m_n) \in \Z^n$
where $m_j = \left< c , \mu_j+\ldots+\mu_n \right>$. 
The filtrations $\{ F_{\vell} \}$ and $\{ \wh{F}_{\vell} \}$ 
that are obtained from the two left actions
then differ by a shift: $F_{\vell} = \wh{F}_{\vell+\vec{m}}$.
\end{Remark}

We now show that the family $\frakX_{\alpha_{i_1},\ldots,\alpha_{i_n}}$
satisfies the mild technical condition~\eqref{mild technical}:

\begin{lemma}\labell{technical assumption holds} 
For every $j \in \{ 1,\ldots,n \}$,
the action of the $j$th factor $(\Cx)_{\text{$j$th}}$ of $(\Cx)^n$
on the special fibre $X_{\bfzero}$ 
has a fixed point at which all the isotropy weights are non-positive.
\end{lemma}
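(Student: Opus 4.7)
My plan is to write down an explicit fixed point, introduce convenient local coordinates around it, compute the $(\Cx)_{j^{\text{th}}}$-action in those coordinates, and read off the isotropy weights.

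The natural candidate fixed point is $x_0 := [e,\ldots,e] \in X_\bfzero$, where I am using the identification $X_\bfzero \cong X_{\alpha_{i_1},\ldots,\alpha_{i_n}}$ and the description of the $j$th factor in \eqref{jth}. I would first verify that $x_0$ is fixed by this action: applying the right $B^n$ action with $b_1=\cdots=b_{j-1}=e$ and $b_j=b_{j+1}=\cdots=b_n=S(\tau)\inv$ returns $[e,\ldots,e,S(\tau),e,\ldots,e]$ to $[e,\ldots,e]$. The relevant fact here is that $S(\tau)\inv\in H\subset B$, so $\psi_H(S(\tau)\inv)=S(\tau)\inv$, and the product telescopes at positions $k\ge j$.

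Next, for each $k$ choose a nonzero vector $f_{i_k}\in\g_{-\alpha_{i_k}}$ and consider the map $(z_1,\ldots,z_n)\mapsto [\exp(z_1 f_{i_1}),\ldots,\exp(z_n f_{i_n})]$. Using that the right $B^n$ action on $P_{\alpha_{i_1}}\times\cdots\times P_{\alpha_{i_n}}$ has trivial stabilizer at $(e,\ldots,e)$ (an easy induction on $n$), together with the isomorphism $T_{[e]}(P_{\alpha_i}/B)\cong\g_{-\alpha_i}$ from the last part of Lemma~\ref{one-parameter subgroup properties}, I obtain that this map is a local biholomorphism from a neighborhood of the origin in $\C^n$ onto a neighborhood of $x_0$ in $X_\bfzero$.

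Now I would compute the $(\Cx)_{j^{\text{th}}}$ action in these coordinates. Since $\operatorname{Ad}(S(\tau))\cdot f_{i_k}=\tau^{-q_{i_k}}f_{i_k}$ (because $f_{i_k}\in\g_{-\alpha_{i_k}}$ and $S(\tau)^{\alpha_{i_k}}=\tau^{q_{i_k}}$), we have $S(\tau)\exp(z_k f_{i_k}) = \exp(\tau^{-q_{i_k}}z_k f_{i_k})\,S(\tau)$. Starting with $[\exp(z_1 f_{i_1}),\ldots,S(\tau)\exp(z_j f_{i_j}),\ldots,\exp(z_n f_{i_n})]$, I conjugate the factor of $S(\tau)$ past position $j$ using this identity, then absorb the resulting trailing $S(\tau)$ by applying $b_j=S(\tau)\inv$. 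Since $\psi_H(S(\tau)\inv)=S(\tau)\inv$, a factor of $S(\tau)$ reappears in position $j+1$, which I conjugate past $\exp(z_{j+1}f_{i_{j+1}})$ in the same way, and so on down to position $n$. The result is
\[
\tau\cdot(z_1,\ldots,z_n)
= (z_1,\ldots,z_{j-1},\ \tau^{-q_{i_j}}z_j,\ \tau^{-q_{i_{j+1}}}z_{j+1},\ \ldots,\ \tau^{-q_{i_n}}z_n).
\]
Thus the isotropy weights of $(\Cx)_{j^{\text{th}}}$ on $T_{x_0}X_\bfzero$ are $0$ in positions $1,\ldots,j-1$ and $-q_{i_k}$ in positions $k\ge j$. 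Since each $q_{i_k}$ is a positive integer by Lemma~\ref{one-parameter subgroup properties}, all weights are non-positive, as required.

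The main thing to get right is the cascading absorption of $S(\tau)$ through positions $j,j+1,\ldots,n$ via the right $B^n$ action; once that bookkeeping is done correctly, the weight calculation is immediate from $\operatorname{Ad}(S(\tau))|_{\g_{-\alpha_{i_k}}}=\tau^{-q_{i_k}}$.
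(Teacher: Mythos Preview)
Your proof is correct and reaches the same conclusion as the paper---the fixed point $[e,\ldots,e]$ with isotropy weights $0,\ldots,0,-q_{i_j},\ldots,-q_{i_n}$---but by a different route. The paper uses the tower-of-fibrations structure of the Bott tower: it builds an invariant flag
\[
\{0\}\subset T_{\mathbf e}X_{\alpha_{i_n}}\subset T_{\mathbf e}X_{\alpha_{i_{n-1}},\alpha_{i_n}}\subset\cdots\subset T_{\mathbf e}X_{\alpha_{i_1},\ldots,\alpha_{i_n}},
\]
identifies each successive quotient with $T_{[e]}(P_{\alpha_{i_k}}/B)$ via the fibration $X_{\alpha_{i_k},\ldots,\alpha_{i_n}}\to P_{\alpha_{i_k}}/B$, and then reads off the weight on that quotient from the third part of Lemma~\ref{one-parameter subgroup properties}. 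You instead write down explicit exponential coordinates $(z_1,\ldots,z_n)\mapsto[\exp(z_1f_{i_1}),\ldots,\exp(z_nf_{i_n})]$ and compute the action directly by conjugating $S(\tau)$ through and absorbing it with the right $B^n$ action. Your approach is more hands-on and makes the linearization of the action visible in one stroke; the paper's approach is more structural and avoids having to verify that the exponential chart is a local biholomorphism (your appeal to freeness of the $B^n$ action plus $\mathfrak p_{\alpha_i}/\b\cong\g_{-\alpha_i}$ handles this, but the paper sidesteps it entirely). Both are valid; yours is arguably more elementary, while the paper's makes clearer why the weights respect the tower filtration.
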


{
\begin{proof}
By the property (1) in Theorem ~\ref{extension of toric action}, 
it is enough to show
that the isotropy weights for the $(\Cx)_{\text{$j$th}}$-action \eqref{jth}
on $X_{\alpha_{i_1},\ldots,\alpha_{i_n}}$
at the fixed point ${\bf e}=[e,\dots,e]$ are all non-positive.

We will compute the isotropy weights
by obtaining an invariant filtration (a flag structure) 
of $X_{\alpha_{i_1},\ldots,\alpha_{i_n}}$ through $\bf e$;
we'll then take the weights by which $(\Cx)_{\text{$j$th}}$ acts
on the (one dimensional) quotients of successive tangent spaces
through ${\bf e}$.

We have the inclusion maps

\begin{multline*}
\begin{array}{cccccc}
\{{\bf e}\} = X_{\emptyset} & \hookrightarrow 
 & X_{\alpha_{i_n}} & \hookrightarrow 
 & X_{\alpha_{i_{n-1}}, \alpha_{i_n}} & \hookrightarrow \cdots  \\
 & & & {\scriptstyle{[p] \mapsto [e, p]}} & & 
\end{array} \\
\begin{array}{cccc}
 \ldots \hookrightarrow 
 & X_{\alpha_{i_{2}},\dots, \alpha_{i_n}} &\hookrightarrow
 & X_{\alpha_{i_{1}}, \dots,\alpha_{i_n}}. \\
 & & \scriptstyle{[p_2,\dots, p_n] \mapsto [e, p_2,\dots, p_n] } & 
\end{array}
\end{multline*}

Each of these inclusion maps is $(\Cx)_{\text{$j$th}}$-equivariant,
where $\tau \in (\Cx)_{\text{$j$th}}$ acts by 
$$ \tau \cdot [p_k,\ldots,p_n] =  \begin{cases}
 [p_k,\, \ldots,\, p_{j-1},\, S(\tau)p_j,\, p_{j+1},\, \ldots,\, p_n] 
 & \text{ if $j > k$ } \\ 
 [S(\tau) p_k,\, p_{k+1},\, \ldots,\, p_n]
 & \text{ if $j \leq k$ }. 
\end{cases}$$

Also, each consecutive pair is an inclusion of a fibre in a fibration, 
\[ \xymatrix{X_{\alpha_{i_{k+1}}, \dots,\alpha_{i_n}} \ar[r]  
   & X_{\alpha_{i_{k}}, \dots,\alpha_{i_n}}\ar[d]  \\
   & {\phantom{= P_{\alpha_{i_k}}/B}} X_{\alpha_{i_k}} = P_{\alpha_{i_k}}/B,
        }
 \]
where we set $X_{\alpha_{i_{k+1}},\ldots,\alpha_{i_n}} = \{ {\bf e} \}$ 
if $k=n$.  
Moreover the fibration maps are equivariant 
where $\tau \in (\Cx)_{\text{$j$th}}$
acts on $P_{\alpha_{i_k}}/B$ 
by left multiplication by $S(\tau)$ if $j \leq k$
and trivially if $j > k$. 

Passing to the tangent spaces, we get a flag
of $(\Cx)_{\text{$j$th}}$ invariant subspaces
$$ \{ 0 \} \, \subset \, T_{\bf e} X_{\alpha_{i_n}}
           \, \subset \, T_{\bf e} X_{\alpha_{i_{n-1}},\alpha_{i_n}}
 \, \subset \, \ldots \, \subset \, 
 T_{\bf e} X_{\alpha_{i_1},\ldots,\alpha_{i_n}},$$
with consecutive quotients
$$ T_{\bf e} X_{\alpha_{i_k},\ldots,\alpha_{i_n}} / 
   T_{\bf e} X_{\alpha_{i_k+1},\ldots,\alpha_{i_n}} 
   \cong  T_{[e]} ( P_{\alpha_{i_k}}/B ). $$ 

Since $\tau \in (\Cx)_{\text{$j$th}}$ acts on $P_{\alpha_{i_k}}/B$
by left multiplication by $S(\tau)$ if $j \leq k$
and trivially if $j > k$,
by the third part of Lemma~\ref{one-parameter subgroup properties},
the isotropy weight for the $(\Cx)_{\text{$j$th}}$ action 
on the tangent space of $P_{\alpha_{i_k}}/B$ at $[e]$ 
is $-q_{i_k}$ if $j \leq k$ and $0$ if $j > k$,
where $q_i$ are the positive integers involved in the choice of $S$.

As $k$ runs from $1$ to $n$, we obtain that the isotropy weights
for the $(\Cx)_{\text{$j$th}}$ action 
on the tangent space at $\bf e$ of $X_{\alpha_{i_1},\ldots,\alpha_{i_n}}$
are $\underbrace{0,0,\ldots,0}_{j-1}$,$-q_{i_j}$,$\ldots$,$-q_{i_n}$,
which are non-positive, as required.
\end{proof}
}

\begin{proof}[Proof of Theorem \ref{extension of toric action}]
We take the Bott-Samelson family as in Construction~\ref{actions}.
The first property \eqref{intertwining} follows from the definitions
of the relevant spaces  in \eqref{line bundles} and in Construction
\ref{construction}.
The second property \eqref{mild tech} is proved in Lemma \ref{technical
assumption holds}.
For the third property \eqref{unique},
choose the one-parameter subgroup $S \colon \Cx \to H$ 
in Lemma~\ref{one-parameter subgroup existence}
to correspond to the smallest possible positive integer $q$.
If $G$ is semisimple, then the centre of $G$ is discrete,
and by the second part of Lemma~\ref{one-parameter subgroup properties},
the positive integer $q$ determines
the one-parameter subgroup $S \colon \Cx \to H$ uniquely.
If $G$ is reductive but not semisimple, 
then $q$ does not determine $S$ uniquely,
but by Remark~\ref{different S} the equivariant families that correspond
to different choices of $S$ differ by fibrewise tensoring 
with some one dimensional representation of $(\Cx)^n$.
\end{proof}

\section{Bott Canonical Basis?}
\labell{sec:canonical-basis}
The Bott-Samelson equivariant families that we constructed 
in Section \ref{sec:Bott-Samelson}
allow us to apply our results from Section~\ref{sec:filtrations}
to representations of Lie groups. 
By the results of Section~\ref{sec:filtrations}, 
under a ``vanishing of higher cohomology'' assumption, 
this construction gives canonical bases (in an appropriate sense)
for spaces of sections
of holomorphic line bundles over Bott-Samelson manifolds.
Since such spaces 
provide geometric models for irreducible representations of compact Lie groups
(with the restriction of the group action to the maximal torus),
this yields canonical bases for such representations,
under the conjectural vanishing of higher cohomology
for the relevant equivariant families.

As before, let $K$ be a compact Lie group, $G$ its complexification,
$T$ a maximal torus, $H$ the Cartan subgroup, $B$ a Borel subgroup,
and $\{\alpha_1 , \ldots  \alpha_r\}$ the simple positive roots.

\begin{Construction} \labell{con:filtration}
Let $\alpha_{i_1},\ldots,\alpha_{i_n}$ 
be a sequence of simple positive roots;
let $Z_{\alpha_{i_1},\dots, \alpha_{i_n}}$ be the corresponding
complex Bott-Samelson manifold.
Let $\mu_1,\ldots,\mu_n$ be a sequence of weights;
let
$L_{\mu_1,\ldots,\mu_n} \to Z_{\alpha_{i_1},\dots, \alpha_{i_n}}$
be the corresponding holomorphic line bundle.
Choose a one-parameter subgroup of the Cartan subgroup,
$S \colon \Cx \to H$,
that satisfies the conditions in
Lemma~\ref{one-parameter subgroup existence}
with the smallest possible positive integer $q$.
These choices determine
a filtration of the space of holomorphic sections 
$$ \Gamma_\hol(Z_{\alpha_{i_1},\dots, \alpha_{i_n}}, 
                    L_{\mu_1,\ldots,\mu_n})$$
into subspaces $F_{\vell}$, parametrized by vectors $\vell \in \Z^n$.
If the higher cohomologies of the sheaf of holomorphic
sections of the line bundle $\frakL_{\mu_1,\ldots,\mu_n}  \to
\frakX_{\alpha_{i_1},\ldots,\alpha_{i_n} }$  vanish,
then each quotient $F_{\vell}/F_{>\vell}$ is either zero or one dimensional, 
and the direct sum $\oplus_{\vell} F_{\vell}/F_{>\vell}$ 
is isomorphic to 
$ \Gamma_\hol(Z_{\alpha_{i_1},\dots, \alpha_{i_n}}, L_{\mu_1,\ldots,\mu_n})$.
A different choice of $S$ with the same $q$ 
results in the same filtration with a shift of its indices.

Here are the details.

Let $\frakL_{\mu_1,\ldots,\mu_n} 
     \to \frakX_{\alpha_{i_1},\ldots,\alpha_{i_n} } \to \C^n$ 
be the equivariant family constructed in Section \ref{sec:Bott-Samelson}.
%
By Theorem~\ref{t:main} and Theorem~\ref{extension of toric action},
we obtain a filtration of the space of holomorphic sections 
$ \Gamma_\hol(Z_{\alpha_{i_1},\dots, \alpha_{i_n}}, 
                    L_{\mu_1,\ldots,\mu_n})$
into subspaces $F_{\vell}$, parametrized by vectors $\vell \in \Z^n$,
and, for each $\vell$, a ``sweep, twist, extend, restrict'' map 
from $F_{\vell}$ to the space
$\Gamma_\hol(X_{\alpha_{i_1},\ldots,\alpha_{i_n}},
             L_{\mu_1,\ldots,\mu_n}^X)$
of holomorphic sections of the holomorphic line bundle 
$L_{\mu_1,\ldots,\mu_n}^X \to X_{\alpha_{i_1},\dots, \alpha_{i_n}}$
over the Bott tower $X_{\alpha_{i_1},\dots, \alpha_{i_n}}$.
This map is $H$ equivariant, 
is trivial on the subspace $F_{>\vell}$ of $F_{\vell}$,
and its image is in the $\vell^{\text{th}}$ weight space
$\Gamma_\hol(X_{\alpha_{i_1},\ldots,\alpha_{i_n}},
             L_{\mu_1,\ldots,\mu_n}^X)_{\vell}$,
which is at most one-dimensional 
because the Bott tower $X_{\alpha_{i_1},\ldots,\alpha_{i_n}}$ 
is a complex toric manifold.
By Theorem~\ref{t:main},
for each $\vell$, we get an $H$ equivariant map
\begin{equation} \labell{map to special} 
F_{\vell}/F_{>\vell}
\to \Gamma_\hol(X_{\alpha_{i_1},\ldots,\alpha_{i_n}}, 
             L_{\mu_1,\ldots,\mu_n}^X)_{\vell} 
\end{equation}
to a vector space of dimension $\leq 1$.
If the higher cohomology of the sheaf of holomorphic sections
of the line bundle $\frakL_{\mu_1,\ldots,\mu_n}$
vanishes, then the maps~\eqref{map to special}
are one-to-one, and 
a choice of complements of $F_{>\vell}$ in $F_{\vell}$ for each $\vell$
determines an isomorphism of 
$\Gamma_\hol(Z_{\alpha_{i_1},\ldots,\alpha_{i_n}}, 
             L_{\mu_1,\ldots,\mu_n}) $
with the associated graded space $\oplus_{\vell} F_{\vell}/F_{>\vell}$.
In this situation, we obtain a decomposition of 
$\Gamma_\hol(Z_{\alpha_{i_1},\ldots,{\alpha_{i_n}}}, 
             L_{\mu_1,\ldots,\mu_n}) $ into one dimensional spaces.
             
\end{Construction}

\begin{Construction}\labell{con:canonical-basis}
Let $V_\lambda$ be a unitary representation of $K$
of maximal weight $\lambda$. 
Let $\alpha_{i_1},\ldots,\alpha_{i_n}$ be a sequence of simple positive roots 
that corresponds to a reduced expression for the longest element
of the Weyl group.
Applying the previous Construction \ref{con:filtration} 
with $\mu_1=\ldots=\mu_{n-1}=0$ and $\mu_n=\lambda$,
and under the higher-cohomology-vanishing condition, 
we obtain a decomposition of $V_\lambda$ into $H$-invariant
one dimensional subspaces. 
This decomposition depends only on our choices of a Borel subgroup of $G$ 
and a reduced expression for the longest element of the Weyl group.

Here are the details.
Consider the $G$-equivariant holomorphic line bundle 
$L_\lambda = G \times_B \C_{-\lambda} \to G/B$.
By the Borel-Weil theorem, there exists a $K$ equivariant linear isomorphism 
\begin{equation} \labell{Borel Weil}
 V_\lambda \to \Gamma_\hol(G/B,L_\lambda)
\end{equation}
from $V_\lambda$ to the space of holomorphic sections of $L_\lambda$.
By Schur's lemma, every two such linear isomorphisms
differ by multiplication by a complex scalar.
We have a pullback diagram
$$ \xymatrix{
 L_{0,\ldots,0,\lambda} \ar[r] \ar[d] & L_\lambda \ar[d] \\
 Z_{\alpha_{i_1},\dots, \alpha_{i_n}} \ar[r] & G/B
}$$
where the horizontal arrows are induced from the multiplication map
${P_{\alpha_{i_1}} \times \ldots \times P_{\alpha_{i_n}} \to G}$.
By Demazure \cite[\S 5 Proposition 5]{demazure}, this diagram induces a $B$-equivariant
linear isomorphism between the spaces of holomorphic sections: 
\begin{equation} \labell{pullback}
 \Gamma_\hol (G/B,L_\lambda) \xrightarrow{\cong}
 \Gamma_\hol (Z_{\alpha_{i_1},\dots, \alpha_{i_n}} ,
                L_{0,\ldots,0,\lambda}).
\end{equation}
Composing with the Borel-Weil isomorphism~\eqref{Borel Weil},
we get a $T$-equivariant linear isomorphism 
\begin{equation} \labell{get an iso}
 V_\lambda \xrightarrow{\cong} 
 \Gamma_\hol (Z_{\alpha_{i_1},\dots, \alpha_{i_n}} ,
                L_{0,\ldots,0,\lambda}).
\end{equation}
Because the Borel-Weil isomorphism~\eqref{Borel Weil}
is unique up to scalar
and the isomorphism~\eqref{pullback} 
is prescribed (it is given by pullback),
the isomorphism~\eqref{get an iso} that we get in this way
is unique up to scalar.
Because the isomorphism~\eqref{get an iso} is unique up to scalar,
our filtration of the space of sections
$\Gamma_\hol (Z_{\alpha_{i_1},\dots, \alpha_{i_n}} ,
                L_{0,\ldots,0,\lambda})$
gives a filtration of the representation $V_\lambda$
that is independent of the choice of the 
Borel-Weil isomorphism~\eqref{Borel Weil}.
We can then view the spaces $F_{\vell}$ as subspaces of $V_\lambda$.
The given $K$ invariant Hermitian inner product on $V_\lambda$ 
determines a choice
of complement of $F_{>\vell}$ in $F_{\vell}$ for each $\vell$,
hence a map $\oplus_{\vell} F_{\vell}/F_{>\vell} \to V_\lambda$.
If the higher cohomology of the sheaf of holomorphic sections
of the line bundle $\frakL_{0,\ldots,0,\lambda} 
 \to \frakX_{\alpha_{i_1},\ldots,\alpha_{i_n}}$ vanishes,
then this map gives a decomposition of $V_\lambda$ into $H$-invariant
one dimensional subspaces.
This decomposition depends only on our initial choices
of a maximal torus $T$ in $K$,
a Borel subgroup $B$ in $G$,
and a reduced expression for the longest element of the Weyl group.

{\it A priori}, 
the construction also depends on a choice of a one-parameter group 
$S \colon \Cx \to H$; 
we choose $S$ as in Lemma~\ref{one-parameter subgroup existence}
with the smallest possible integer $q$.
If $G$ is semisimple, there exists only one such an $S$.
If $G$ is not semisimple, there can be more than one such an $S$,
but different choices for $S$
result in the same filtration, with only a shift in the indexing set.
See Remark~\ref{different S}.
\end{Construction}


\section{Example}
\labell{sec:examples}

In this section, we consider a concrete example. Let $G=\SL(3,\C)$ and $B$ be the subgroup of upper triangular matrices,
with the positive simple roots $\alpha_1=(1,-1,0)$
and $\alpha_2=(0, 1, -1)$.  The reduced decomposition
$w_0=s_{\alpha_1}s_{\alpha_2}s_{\alpha_1}$ of the longest element
$w_0$ of the Weyl group gives rise to the Bott-Samelson manifold
$Z_{\alpha_1,\alpha_2,\alpha_1}$.
We consider the line bundle  $L_{(0, \varpi_{2}, \varpi_{1})}=P_{\alpha_1} \times P_{\alpha_2} 
  \times P_{\alpha_1} \times_{B^3} \C_{(0, -\varpi_{2}, -\varpi_{1})}$ 
over $Z_{\alpha_1,\alpha_2,\alpha_1}$, where $\varpi_1$ and $\varpi_2$ are the fundamental weights (see \S \ref{sec:Bott-Samelson}).
Its space of holomorphic sections
$\Gamma_\hol(Z_{\alpha_1,\alpha_2,\alpha_1}, L_{(0, \varpi_{2}, \varpi_{1})})$ is isomorphic,
as a $B$-representation,
to $V_{\lambda}$, the irreducible representation of $G$ with
the highest weight $\lambda:= \varpi_{1}+\varpi_{2}=\alpha_1+\alpha_2$;
see \cite[Theorem~6]{lakshmibai-littelmann-magyar}.
Taking $L_\lambda := G \times_B \C_{-\lambda}$
(see \S\ref{sec:overview}),
we have a pullback diagram of holomorphic line bundles
$$ \xymatrix{
 L_{(0, \varpi_{2}, \varpi_{1})}&\cong &L_{(0, 0, \lambda)} \ar[rr] \ar[d] && L_{\lambda} \ar[d] \\ 
 &&Z_{ \alpha_1,\alpha_2,\alpha_1} \ar[rr]^{\text{multiplication}} && G/B 
}$$
that induces a bijection on their spaces of holomorphic sections.

For an explicit  computation, we will use a certain trivialization
induced by a Pl\"ucker embedding.  Let $n := 3$.
For each $1 \leq k \leq n$, 
let $\Gr(k)$ be the Grassmannian of $k$-dimensional subspaces of $\C^n$,
and let $\mathbb{F}^k=\mathrm{span}(e_1,\dots,e_k)$ be the $k$-dimensional
subspace of $\C^n$ spanned by the standard basis vectors $e_1,\dots, e_k$.
Let $\St(k)$ denote the Stiefel manifold
of linearly independent $k$-tuples of vectors in $\C^n$,
and consider the Pl\"ucker embedding 
\[ w \colon \Gr(k)\rightarrow \mathbb{P}(\wedge^k\mathbb{C}^n), 
\quad
 \Span\{v_1,\dots,v_k\}\mapsto [v_1\wedge\cdots \wedge v_k] ,\] 
induced from the map 
\[ \St(k)\rightarrow \wedge^k\mathbb{C}^n \ssminus \{ 0 \} , 
\quad
 \big( v_1,\dots,v_k \big) \mapsto v_1\wedge\cdots \wedge v_k \, . \] 
Also consider the embedding 
$$ Z_{\alpha_k} := P_{\alpha_k}/B \to \Gr(k), 
   \quad [p] \mapsto p {\mathbb F}^k $$
of the one-step Bott-Samelson manifold into the Grassmannian,
which is induced from the embedding 
$$ P_{\alpha_k} \to \St(k), \quad p \mapsto (pe_1,\ldots,pe_k) $$
of the parabolic into the Stiefel manifold.

Consider the pullback $w^* L_1$
of the hyperplane bundle $L_1 \to \mathbb{P}(\wedge^k\mathbb{C}^n)$
under the Pl\"ucker embedding.
Its further pullback under the embedding $Z_{\alpha_k} \to \Gr(k)$
is isomorphic to the line bundle $L_{\varpi_k} \to Z_{\alpha_k}$
(see \S\ref{sec:Bott-Samelson}),
and this pullback yields a bijection on the spaces of holomorphic sections 
$$ \Gamma_\hol(\Gr(k), w^*L_1) 
   \xrightarrow{\cong} \Gamma_\hol(Z_{\alpha_k},L_{\varpi_k}) .$$

Pulling back to the Stiefel manifold, we obtain a trivial line bundle.
Through this pullback,
the spaces of holomorphic sections, $\Gamma_\hol(\Gr(k), w^*L_1)$
and $\Gamma_\hol(Z_{\alpha_k},L_{\varpi_k})$,
get identified with the span of the $k$-minors in the Stiefel coordinates.

For example, the space of holomorphic sections 
of the line bundle $L_{\varpi_2} \to Z_{\alpha_2}$
is identified with the linear span of the functions 
$$ \Delta_{ab} \colon \St(2) \to \C \quad , \quad
\Delta_{ab}(y) = \det \left[ 
\begin{array}{cc}
y_{a1} & y_{a2} \\
y_{b1} & y_{b2} \, ,
\end{array}
\right] 
$$
for $a,b \in \{ 1, 2, 3 \}$, where
$$ y = 
\left(
\left[ \begin{array}{c} y_{11} \\ y_{21} \\ y_{31} \end{array} \right] ,
\left[ \begin{array}{c} y_{12} \\ y_{22} \\ y_{32} \end{array} \right]
\right)\in \St(2)\, .$$

These maps are expressed in the following diagram,
in which all the squares are pullback diagrams,
the vertical arrows on the left are principal $B$ bundles,
the vertical arrows in the middle are principal $\GL(k)$ bundles,
the vertical arrows on the right are principal $\Cx$ bundles,
and 
the arrows that point from the back to the front are holomorphic line bundles.

\[
  \xymatrix{
& P_{\alpha_k} \times \C \ar[rr] \ar[ld] \ar'[d][dd] && \St(k) \times \C 
\ar[rr] \ar[dl] \ar'[d][dd]
&& (\wedge^k\C^n\smallsetminus\{0\})\times\C 
\ar[dd]\ar[dl] \\
P_{\alpha_k}\ar[rr] \ar[dd] &&  \St(k) 
 \ar[rr]^(.6){} \ar[dd]^(.68){} 
 && \wedge^k\C^n\smallsetminus\{0\} 
 \ar[dd]^(.68){}& \\
& L_{\varpi_k}\ar'[r][rr] \ar[ld] && w^* L_1
\ar'[r][rr] \ar[dl] && L_1 \ar[dl] \\ 
Z_{\alpha_k}\ar[rr] && \Gr(k) \ar[rr]^(.6){w} 
 &&  \mathbb{P}(\wedge^k\C^n) 
&
}
\]

\bigskip
\bigskip
\bigskip
\bigskip

We also have the embedding of $Z_{\alpha_1, \alpha_2,\alpha_1}$ 
into the product of Grassmannians by successive multiplications (\cite[\S4.1]{lakshmibai-littelmann-magyar}, \cite[\S1]{magyar 98 Schubert}):

\[\mu \colon Z_{\alpha_1,\alpha_2,\alpha_1}
 \rightarrow \Gr(1)\times \Gr(2)\times \Gr(1);  \quad
 [p,q,r]\mapsto (p\mathbb{F}^1,pq\mathbb{F}^2,pqr\mathbb{F}^1) \, , \]
which is 
induced from the embedding of the product of parabolics
into the product of Stiefel manifolds:
\[ \mu' \colon 
P_{\alpha_1} \times P_{\alpha_2} \times P_{\alpha_1}
 \rightarrow \St(1) \times \St(2) \times \St(1); \quad
 (p,q,r) \mapsto ( pe_1; \, pq(e_1,e_2); \, pqre_1 ).\]

In our example, the line bundle $L_{(0, \varpi_{2}, \varpi_{1})}$ 
is isomorphic to the pullback under $\mu$ of the line bundle 
$w^*L_0 \boxtimes w^*L_1 \boxtimes w^*L_1$
over $\Gr(1)\times \Gr(2)\times \Gr(1)$,
where $L_0$ is the trivial bundle 
and $L_1$ is --- as before --- the hyperplane bundle
(see \cite[\S 4.1]{lakshmibai-littelmann-magyar}). 
Thus, we have a pullback map,
from the span of all products of the form 
$\Delta_{abc}(x,y,z) = \Delta_{ab}(y)\Delta_c({z})$, 
where $a,b,c \in \{ 1, 2, 3 \}$ and where $\Delta_{ab}, \Delta_c$,
are the corresponding minors on the Stiefel coordinates $x,y,z$
for $(x,y,z) \in \St(1) \times \St(2) \times \St(1)$,
to the space of holomorphic sections
$\Gamma_\hol(Z_{\alpha_1,\alpha_2,\alpha_1}, L_{(0, \varpi_{2}, \varpi_{1})})$.
This pullback map is in fact one-to-one
\cite[\S 4.1]{lakshmibai-littelmann-magyar}
and onto \cite[\S 4.2]{lakshmibai-littelmann-magyar},
so we can identify this space of holomorphic sections, 
$\Gamma_\hol(Z_{\alpha_1,\alpha_2,\alpha_1}, L_{(0, \varpi_{2}, \varpi_{1})})$, 
with the span of the functions
$$\Delta_{abc} \colon \St(1) \times \St(2) \times \St(1) \to \C .$$
We denote by $s_{(abc)}$ the section that corresponds 
to the function $\Delta_{abc}$ under this identification.

We now have the following diagram, where the squares are pullback diagrams,
the vertical arrows on the left are principal $B^3$ bundles
and the vertical arrows on the right are principal 
$\GL(1) \times \GL(2) \times \GL(1)$ bundles, 
and the arrows that point from the back to the front
are holomorphic line bundles.

\[
  \xymatrix@C=-1em{
 & P_{\alpha_1}\times P_{\alpha_2}\times P_{\alpha_1}\times \C 
\ar[rrr]^{} \ar[dl] \ar'[d][ddd]^{\pr} 
 &&& \St(1)\times \St(2)\times \St(1)\times\C 
\ar[ddd]\ar[dl]_{} \\
 P_{\alpha_1}\times P_{\alpha_2}\times P_{\alpha_1}
 \ar[rrr]^(.55){\mu'} \ar[ddd]^(.68){} 
\ar@/^/[ur]^(.4){\hat s} 
 &&& \St(1)\times \St(2)\times \St(1)
 \ar[ddd]^(.68){}& \\
 &&& \\
&  L_{(0, \varpi_{2}, \varpi_{1})}
 \ar'[rr]^{}[rrr] \ar[dl] 
 &&& w^*L_0 \boxtimes w^*L_1 \boxtimes w^*L_1\ar[dl]_{}
\\ Z_{\alpha_1,\alpha_2,\alpha_1} \ar[rrr]^(.55){\mu} 
\ar@/^/[ur]^{s} 
   &&&
   \Gr(1)\times \Gr(2)\times \Gr(1)&
}
\]

More precisely, the function $\Delta_{abc}$ determines the section
\[ \hat{s}_{} \colon P_{\alpha_1} \times P_{\alpha_2} \times P_{\alpha_1}
  \to P_{\alpha_1} \times P_{\alpha_2} \times P_{\alpha_1} \times \C 
\quad , \quad
   \hat{s}_{} (p,q,r) := (p,q,r,\Delta_{abc}(\mu'(p,q,r)) \, , \]
which descends to the section 
\[  s \colon Z_{\alpha_1,\alpha_2,\alpha_1}\rightarrow L_{(0, \varpi_{2}, \varpi_{1})}
\quad , \quad
    s_{}([p,q,r]) := \pr \circ \hat{s}_{} (p,q,r) .\]
This is well-defined, since 
\begin{multline*}
   \Delta_{abc}(\mu'((p,q,r)\cdot (b_1, b_2,b_3))) 
 = \Delta_{ab}(pqb_2(e_1,e_2)) \Delta_{c}(pqrb_3 e_1 ) \\
 = \Delta_{abc}(\mu'((p,q,r))) b_2^{\varpi_2} b_3^{\varpi_1},
 \quad (b_1,b_2,b_3)\in B^3  \, ,
\end{multline*}
where $\varpi_1$ and $\varpi_2$ are the fundamental weights.
We then define $s_{(abc)} := s$.

Fix the one-parameter subgroup
$$ S(t) \colon \mathbb{C}^{\times}\rightarrow H
 \quad , \quad 
     t \mapsto \left(\begin{array}{ccc}
 t & 0 & 0 \\ 0 & 1 & 0 \\ 0 & 0 & t^{-1} \end{array} \right),$$ 
which satisfies the conditions 
in Lemma~\ref{one-parameter subgroup existence}.
Let $(\Cx)^3$ act on the trivial line bundle
$\C^3 \times P_{\alpha_1} \times P_{\alpha_2} \times P_{\alpha_1} \times \C$ 
by 
\[(t_1,t_2,t_3) \cdot (a_1,a_2,a_3, p, q, r, \theta)
 := (a_1t_1,a_2t_2,a_3t_3, S(t_1)p, S(t_2)q, S(t_3)r, \theta). \] 
As described in Section~\ref{sec:Bott-Samelson},
this action descends to an equivariant family
$\frakL \to \frakX \to \C^3$,
where $\frakL = \frakL_{(0, \varpi_{2}, \varpi_{1})}$ 
and $\frakX = \frakX_{\alpha_1,\alpha_2,\alpha_1}$.
Now we consider the sweeps (equivariant extensions)
$\tilde{\hat{s}}_{}$ and  $\tilde s_{}$ 
of $\hat{s}_{}$ and $s_{}$.
We have the following diagram,
in which the dotted arrows reflect that the sweeps
might not be defined everywhere,
and in which the horizontal arrows are the natural embeddings
as the fibres over $\bfone := (1,1,1) \in \C^3$.
\[
  \xymatrix@C=-3em{
& P_{\alpha_1} \times P_{\alpha_2} \times P_{\alpha_1} \times \C 
 \ar[rr]^{I} \ar[dl] \ar'[d][dd]  && 
 \C^3 \times P_{\alpha_1} \times P_{\alpha_2} \times P_{\alpha_1} \times \C 
\ar[dd]^(.65){\pr} \ar[dl]^{\pi} \\
 P_{\alpha_1} \times P_{\alpha_2} \times P_{\alpha_1}\ar[rr]^(.5){I} 
 \ar@/^/[ur]^(.4){\hat s} \ar[dd] && 
 \C^3 \times P_{\alpha_1} \times P_{\alpha_2} \times P_{\alpha_1}
 \ar[dd]^(.65){\pr} \ar@/^/@{.>}[ur]^(.4){\tilde{\hat{s}}} & \\
&  L_{(0, \varpi_{2}, \varpi_{1})}\ar'[r]^{I}[rr] \ar[dl] 
 && \frakL_{(0, \varpi_{2}, \varpi_{1})} \ar[dl]^{\pi} \\ 
Z_{\alpha_1,\alpha_2,\alpha_1} \ar[rr]^{I} \ar@/^/[ur]^{s }&&    
\frakX_{\alpha_1,\alpha_2,\alpha_1}\ar@/^/@{.>}[ur]^{\tilde s}  &
}
\]
Because
\[ \tilde{s}([a_1,a_2,a_3,p,q,r])
 = \pr\circ \tilde{\hat{s}}(a_1,a_2,a_3,p,q,r) \quad \text{and} \quad 
 (\bft^{-\ell}\cdot \tilde{s})\circ \pr 
 = \pr\circ (\bft^{-\ell}\cdot \tilde{\hat{s}}),\]
we have that 
\[ \bft^{-\ell}\cdot\tilde s \ \text{ extends to }  \
 \frakX_{\alpha_1,\alpha_2,\alpha_1} \ \text{ if and only if }  \
\bft^{-\ell}\cdot\tilde{\hat{s}} \ \text{ extends to } \ 
\C^3\times P_{\alpha_1}\times P_{\alpha_2}\times P_{\alpha_1}.\]

Now,
\[ \tilde{\hat{s}}(a_1,a_2,a_3, p,q,r)
 = (a_1,a_2,a_3) \cdot \hat{s}((a_1,a_2,a_3)\inv\cdot(a_1,a_2,a_3, p,q,r)) \]
\[ = (a_1,a_2,a_3) \cdot \hat{s} (S(a_1)\inv p, S(a_2)\inv q, S(a_3)\inv r) \]
\[ = (a_1, a_2, a_3) \cdot 
 \left( 1,1,1,S(a_1)\inv p, S(a_2)\inv q, S(a_3)\inv r,
 \Delta_{abc} \left(\mu'(S(a_1)\inv p, S(a_2)\inv q, S(a_3)\inv r ) \right)\right) \]
\[= \left( a_1,a_2,a_3, p,q,r, 
 \Delta_{abc}(\mu'(S(a_1)\inv p, S(a_2)\inv q, S(a_3)\inv r)) \right) .\]

Therefore, we can identify $\tilde{\hat{s}}(a_1,a_2,a_3, p,q,r)$ 
with the complex valued function 
\[ \Delta_{abc}\left(\mu'(S(a_1)\inv p, S(a_2)\inv q, S(a_3)\inv r )\right)\]

The sequence of indices $abc$ indexing the function $\Delta_{abc}$ 
is called a \emph{tableau}. In  \cite{lakshmibai-magyar:GLn} 
the authors introduced the notion of \emph{standard tableaux} 
and proved that the set of standard tableaux gives rise 
to a basis of $\Gamma_\hol(Z_{\alpha_1,\alpha_2,\alpha_1}, L_{(0, \varpi_{2}, \varpi_{1})})$. 

In our example, the standard tableaux are 
\[(121),\ (122),\ (131),\ (231),\ (232),\ (132),\ (133),\ (233).\] 

We will now compute the $\ell$-vector (see Definition~\ref{l-vector}) 
for the section corresponding to the standard tableau $(abc)=(121)$.
Let 
\[ p = \left(\begin{array}{ccc}
 p_{11} & p_{12} & p_{13} \\ 
 p_{21} & p_{22} & p_{23} \\
 0 & 0 & p_{33} 
\end{array}\right),  \quad 
   q = \left( \begin{array}{ccc}
 q_{11} & q_{12} & q_{13} \\
 0 & q_{22} & q_{23} \\
 0 & q_{32} & q_{33} 
\end{array}\right), \quad 
   r = \left(\begin{array}{ccc}
 r_{11} & r_{12} & r_{13} \\
 r_{21} & r_{22} & r_{23} \\
 0 & 0 & r_{33}
\end{array}\right). \]
  
Then, 
\begin{multline*}
\Delta_{121}(\mu'(p,q,r))
  =  \Delta_{12}(pq(e_1, e_2))\Delta_{1}(pqre_1) \\
  =  \left( p_{11}q_{11} 
       (p_{21}q_{12} + p_{22}q_{22}+p_{23}q_{32}) 
   - (p_{11}q_{12}+p_{12}q_{22}+p_{13}q_{32}) p_{21}q_{11} \right) \\
    \cdot \left( 
     p_{11}q_{11}r_{11} + (p_{11}q_{12}+p_{12}q_{22}+p_{13}q_{32}) r_{21} 
    \right)\,.
\end{multline*}

Substituting, we get
\begin{multline*}
\Delta_{121}\left(\mu'(S(t_1)^{-1}p, S(t_2)^{-1}q, S(t_3)^{-1}r)\right)  \\
 = 
\left( \dfrac{p_{11}q_{11}}{t_1t_2}
 \left( \dfrac{p_{21}q_{12}}{t_2}+p_{22}q_{22}+t_2p_{23}q_{32} \right)
 - \left(\dfrac{p_{11}q_{12}}{t_1t_2}+\dfrac{p_{12}q_{22}}{t_1}+\dfrac{t_2p_{13}q_{32}}{t_1} \right) \dfrac{p_{21}q_{11}}{t_2} \right)  \\
 \cdot \left( \dfrac{p_{11}q_{11}r_{11}}{t_1t_2t_3}+(\dfrac{p_{11}q_{12}}{t_1t_2}+\dfrac{p_{12}q_{22}}{t_1}+\dfrac{t_2p_{13}q_{32}}{t_1})r_{21} \right) \\
 = t_1^{-2}t_2^{-2}t_3^{-1}
 \left( q_{11}q_{22}(p_{11}p_{22} - p_{12}p_{21}) +t_2q_{11}q_{32}(p_{11}p_{23} - p_{13}p_{21}) \right) \\
\cdot
 \left( p_{11}q_{11}r_{11}+(t_3p_{11}q_{12}+t_2t_3p_{12}q_{22}+t_2^2t_3p_{13}q_{32})r_{21} \right) ,
\end{multline*}
where $S(t) \colon \mathbb{C}^{\times}\rightarrow H$
is given by $t \mapsto \left(\begin{array}{ccc}
 t & 0 & 0 \\ 0 & 1 & 0 \\ 0 & 0 & t^{-1} \end{array} \right)$. 

This implies that the $\ell$-vector for the section $s_{(121)}$
that corresponds to the function $\Delta_{121}$ is equal to $(-2,-2,-1)$.

A similar calculation gives rise to the $\ell$-vectors
for the other standard tableaux. 
We list these in the following table:

\def\arraystretch{1.5}
 \[\begin{array}{c|c|c} 
     \text{ standard tableau } & s & \vec{\ell_s} \\
      \hline
      (121) & s_{(121)} &  (-2,-2,-1)\\
      (122) & s_{(122)} & (-1,-2,-1) \\
      (131) & s_{(131)} & (-1,-1,-1) \\
      (231) & s_{(231)} & (0,-1,-1) \\
      (232) & s_{(232)} & (1,-1,-1)\\
      (132) & s_{(132)} & (0,-1,-1)\\
      (133) & s_{(133)} & (1,1,0)\\
      (233) & s_{(233)} & (2,1,0)\\
        \hline
            & s_{(231)} - s_{(132)} &  (0,0,0)\\
 \end{array}
 \]
\def\arraystretch{1}

The sections $s_{(231)}$ and $s_{(132)}$ corresponding
to the standard tableaux $(231)$ and $(132)$, respectively, have
the same $\ell$-vector, $(0,-1,-1)$.
This means that these sections
belong to the same filtered piece $F_{(0,-1,-1)}$
and are equivalent in the quotient space $F_{(0,-1,-1)}/F_{>(0,-1,-1)}$.  
Their difference $s_{(231)}-s_{(132)}$ has
the $\ell$-vector $\vec{\ell_s}=(0,0,0)>(0,-1,-1)$. 
This implies that 
the space $\Gamma_\hol(Z_{\alpha_1,\alpha_2,\alpha_1},
L_{(0, \varpi_{2}, \varpi_{1})})$ decomposes into eight one-dimensional spaces
$F_{\vec{\ell}}/F_{>\vec{\ell}}$, where $\vec{\ell}$ are the eight
distinct vectors written on the second column of the above table.

\begin{Remark}  \labell{string polytope}
In the example in this section 
(with $G=\SL(3,\C)$, $w_0=s_{\alpha_1}s_{\alpha_2}s_{\alpha_1}$, 
and $L_{(0, \varpi_{2}, \varpi_{1})}$), 
the convex hull of the eight $\ell$-vectors is 
upper-triangularly and unimodularly equivalent to the corresponding
string polytope,  
which was realized as a Newton-Okounkov body in 
\cite[\S 5]{kaveh}.
More precisely, by the following affine linear transformation
of $\Z^3$,
\[ x \ \mapsto \ Ax+b \ ,
   \quad \text{ where } \quad 
A = \left(\begin{array}{ccc} 
 1 & 1 & 1\\ 0 & 1 & 1\\ 0& 0& 1\end{array}\right)
\quad \text{ and } \quad b = 
\left(\begin{array}{c} -2\\ -2\\ -1\end{array}\right),\] 
the string polytope is transformed to the convex hull 
of these eight $\ell$-vectors.
It would be interesting to find a connection between our work 
and Newton-Okounkov body theory that explains this observation.
For example, perhaps the notion of the $\ell$-vector $\vec{\ell_s}$ 
of a section $s$ 
can be extended to a valuation whose Newton-Okounkov body 
is equal to the convex hull of the collection 
of the $\ell$-vectors~$\vec{\ell_s}$. See Remark \ref{rk:valuations}.
\end{Remark}


\appendix
\section{A ``holomorphic Hadamard lemma''}
\labell{app:hadamard}

The purpose of this appendix is to prove Theorem~\ref{HA},
which we use in the proof of Lemma~\ref{l:hadamard}. Throughout this section,  ``sheaf'' refers to a presheaf that satisfies the sheaf conditions, and ``cohomology'' refers to  \v{C}ech cohomology as in Bott-Tu (\cite[\S 10]{bott-tu}).
Fix a natural number $r$,
a complex manifold $\frakX$, a submersion 
$$\pi = (t_1,\ldots,t_r) \colon \frakX \to \C^r,$$
and a holomorphic line bundle 
$$\frakL \to \frakX.$$
Denote by $\calO_{\frakL}$ the sheaf of holomorphic sections 
of $\frakL$:
\ $ \calO_\frakL(U) = \Gamma_{\hol}(U,\frakL)$.
Our goal is to prove the following theorem.

\begin{Theorem}\labell{HA}
Fix a subset $A \subseteq \{1,\ldots,r\}$.
For any open subset $U \subseteq \frakX$
such that $H^{\geq 1}(U,\calO_{\frakL}) {= \{ 0 \}}$,
and for any section $s \in \Gamma_\hol(U,\frakL)$,
if $s$ vanishes on $U \cap \bigcap\limits_{i \in A} \{ t_i=0 \}$,
then there exist sections $s_i \in \Gamma_\hol(U,\frakL)$, 
indexed by $i \in A$, such that $s = \sum_{i \in A} t_i s_i$. 
\end{Theorem}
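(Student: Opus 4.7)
The plan is to realize the desired decomposition as surjectivity of a certain map of global sections, obtained via the Koszul complex of $\calO_\frakL$ with respect to the functions $\{t_i\}_{i \in A}$. After relabeling assume $A = \{1, \ldots, k\}$. First I would observe that, because $\pi$ is a holomorphic submersion, the functions $t_1, \ldots, t_k$ are locally part of a holomorphic coordinate system, and hence form a regular sequence in each stalk $\calO_x$. Consequently the Koszul complex
\[ K_\bullet: \quad 0 \to K_k \xrightarrow{d_k} K_{k-1} \to \cdots \to K_1 \xrightarrow{d_1} K_0 = \calO_\frakL \to 0, \qquad K_j := \Lambda^j \calO^k \otimes_{\calO} \calO_\frakL \cong \calO_\frakL^{\binom{k}{j}}, \]
with differentials given by contraction with $(t_1,\ldots,t_k)$, is exact on stalks, and so provides a locally free resolution of $\calO_\frakL / \mathcal{I} \calO_\frakL$, where $\mathcal{I}$ is the ideal sheaf generated by $t_1,\ldots,t_k$. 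The hypothesis that $s$ vanishes on $U \cap \{t_1 = \cdots = t_k = 0\}$ translates, using that the $t_i$ cut out this set transversally, into $s \in \Gamma(U, \mathcal{I}\calO_\frakL) = \Gamma(U, \mathcal{B}_0)$, where $\mathcal{B}_j := \on{im}(d_{j+1}\colon K_{j+1} \to K_j)$; proving the theorem thus amounts to proving that the map $\Gamma(U, K_1) \to \Gamma(U, \mathcal{B}_0)$ induced by $d_1$ is surjective, since under the natural identifications this map is precisely $(s_i)_{i \in A} \mapsto \sum_{i \in A} t_i s_i$.

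Next, I would break the resolution into short exact sequences $0 \to \mathcal{B}_j \to K_j \to \mathcal{B}_{j-1} \to 0$ for $j \geq 1$ (using exactness, so $\ker d_j = \mathcal{B}_j$). Since each $K_j$ is a finite direct sum of copies of $\calO_\frakL$, the hypothesis gives $H^{\geq 1}(U, K_j) = 0$ for every $j$. The long exact sequences then yield, for all $i \geq 1$ and $j \geq 1$, isomorphisms $H^i(U, \mathcal{B}_{j-1}) \xrightarrow{\cong} H^{i+1}(U, \mathcal{B}_j)$. Iterating these for $k \geq 2$ gives
\[ H^1(U, \mathcal{B}_1) \cong H^2(U, \mathcal{B}_2) \cong \cdots \cong H^{k-1}(U, \mathcal{B}_{k-1}), \]
and at the top end, exactness of Koszul at $K_k$ forces $d_k$ to be injective, so $\mathcal{B}_{k-1} \cong K_k \cong \calO_\frakL$ and the rightmost group equals $H^{k-1}(U, \calO_\frakL) = 0$. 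The remaining case $k = 1$ is immediate, since then $\mathcal{B}_1 = 0$.

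Finally, feeding the resulting vanishing $H^1(U, \mathcal{B}_1) = 0$ into the long exact sequence attached to $0 \to \mathcal{B}_1 \to K_1 \to \mathcal{B}_0 \to 0$ produces the desired surjectivity $\Gamma(U, K_1) \twoheadrightarrow \Gamma(U, \mathcal{B}_0)$, giving for each $s \in \Gamma(U, \mathcal{B}_0)$ a preimage $(s_i)_{i \in A}$ with $s = \sum_{i \in A} t_i s_i$. The main obstacle I anticipate is verifying the exactness of the Koszul complex as a complex of sheaves; this reduces to the standard commutative-algebra fact that any subset of a system of local holomorphic coordinates is a regular sequence in the stalk of $\calO$, combined with the local freeness of $\calO_\frakL$, which ensures that tensoring preserves exactness. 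A secondary (minor) point will be the identification of the kernel of $\calO_\frakL \to \calO_\frakL/\mathcal{I}\calO_\frakL$ with the subsheaf of sections vanishing on $Z$, which again follows from the transversality provided by the submersion hypothesis.
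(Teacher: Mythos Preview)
Your argument is correct and is precisely the Koszul-complex proof that the paper itself alludes to in the remark immediately following the statement of Theorem~\ref{HA} (``this theorem is known to experts, and it can be proved using an appropriate Koszul complex of modules. Our proof is direct''). The paper instead proceeds by a hands-on simultaneous induction: it introduces auxiliary sheaves $\calS_{A,k}$ (which are essentially the syzygy sheaves $\mathcal{B}_j$ in your notation, reindexed), and proves three intertwined statements---Theorem~\ref{HA}, a vanishing result $H^{\geq 1}(U,\calS_{A,k})=0$, and a decomposition result for elements of $\calS_{A,k}(U)$---by induction on $|A|$ and decreasing induction on $k$, with the base cases handled by explicit Taylor expansions and the inductive steps by direct \v{C}ech-cocycle manipulations. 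Your approach is shorter and more conceptual, packaging the combinatorics into the Koszul differential and reducing everything to one dimension-shifting chain; the paper's approach is longer but entirely self-contained and elementary, never invoking regular sequences or resolutions, and makes the \v{C}ech-level construction of the $s_i$ explicit. The authors themselves remark that the two proofs should be ``in some sense equivalent'' and that ``it would be interesting to spell this out''---your $\mathcal{B}_j$ are exactly the bridge.
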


\begin{Remark}
If $A = \emptyset$, then the conclusion of Theorem~\ref{HA}
is trivially true:
if $s$ vanishes on $U \cap \left( \text{an empty intersection} \right)$,
which is $U$, then $s$ is the empty sum, which is zero. 
\end{Remark}

\begin{Remark}
As we learned from Michel Brion, this theorem is known to experts,
and it can be proved using an appropriate Koszul complex of modules.
Our proof is direct.  We expect our proof to be in some sense equivalent
to the one that would be obtained from the Koszul machinery;
it would be interesting to spell this out.
\end{Remark}

The proof of Theorem~\ref{HA} involves additional sheaves,
which we now introduce.

For any subset $A$ of $\{1,\ldots,r\}$, 
denote by $|A|$ the number of elements of $A$, 
and for any integer $k$ such that $0 \leq k \leq |A|$, 
denote by $\binom{A}{k}$ the set of subsets of $A$ that have $k$ elements.
For any such $A$ and $k$,
consider the sheaf on $\frakX$ that is given by\[ 
 \calS_{A,k} (U) := \left\{ 
 \left( s_{j,B'} \right) \in 
   \textstyle{\bigoplus\limits_{\substack{ (j,B') \\
 j \in A , \, B' \in \binom{A \ssminus \{ j \}}{k-1} }}}
\Gamma_\hol(U,\frakL)
 \ \left| \ {\textstyle{ 
\sum\limits_{ B \in \binom{A}{k} }
 {\prod\limits_{\ell \in B}} t_\ell 
 \sum\limits_{j \in B} s_{j,B \ssminus \{ j \} } = 0 }} \right. \right\} .
\]

For later reference we note that, 
when $k=0$, we have $\calS_{A,0}(U) = \{ 0 \}$,
and when $k=1$, 
\[
\calS_{A',1}(U) = \left\{ (s_j) \in \bigoplus_{j \in A'}
 \Gamma_\hol(U,\frakL) \ \left| \ \sum_{i \in A'} t_i s_i = 0 \right. \right\}.
\]

\begin{Theorem}\labell{VAk}
Fix a subset $A \subseteq \{ 1 , \ldots , r \}$
and an integer $k$ such that $1 \leq k \leq |A|$.
Then, for any open subset $U \subseteq \frakX$,
if $H^{\geq 1}(U;\calO_{\frakL}) = \{ 0 \}$,
then $H^{\geq 1}(U;\calS_{A,k}) = \{ 0 \}$. 
\end{Theorem}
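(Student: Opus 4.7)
The plan is to proceed by induction on $|A|$. The base case $|A|=1$ forces $k=1$; then $\calS_{\{i\},1}(U) = \{s \in \Gamma_\hol(U,\frakL) \mid t_i s = 0\}$ vanishes because multiplication by the submersion coordinate $t_i$ is injective on $\calO_\frakL$.

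For the inductive step I assume $|A|\ge 2$, pick $i_0\in A$, and set $A':=A\setminus\{i_0\}$. I will split the indices $(j,B')$ defining $\calS_{A,k}$ according to the role of $i_0$ into three types: (I) $j=i_0$, giving $u_{B'}:=s_{i_0,B'}$ for $B'\in\binom{A'}{k-1}$; (II) $j\in A'$ and $i_0\in B'$, giving $v_{j,B''}:=s_{j,B''\cup\{i_0\}}$ for $B''\in\binom{A'\setminus\{j\}}{k-2}$; and (III) $j\in A'$ and $i_0\notin B'$, giving $w_{j,B'}$ for $B'\in\binom{A'\setminus\{j\}}{k-1}$. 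Introducing $\rho_{B'}:=u_{B'}+\sum_{j\in B'}v_{j,B'\setminus\{j\}}$, $\Psi_{A'}(w):=\sum_{B\in\binom{A'}{k}}t^B\sum_{j\in B}w_{j,B\setminus\{j\}}$, and $M(\rho):=\sum_{B'\in\binom{A'}{k-1}}t^{B'}\rho_{B'}$, the single defining constraint of $\calS_{A,k}$ rewrites cleanly as $\Psi_{A'}(w)+t_{i_0}\,M(\rho)=0$.

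From this rewriting I will extract two short exact sequences of sheaves:
\[
0\to\mathcal{F}\to \calS_{A,k}\xrightarrow{(u,v,w)\mapsto(\rho,w)}\mathcal{M}\to 0
\quad\text{and}\quad
0\to \calS_{A',k}\to\mathcal{M}\xrightarrow{(\rho,w)\mapsto\rho}\mathcal{R}\to 0,
\]
where $\mathcal{M}:=\{(\rho,w)\mid \Psi_{A'}(w)+t_{i_0}M(\rho)=0\}$, the kernel $\mathcal{F}$ (tuples with $\rho=w=0$, so $u_{B'}=-\sum_j v_{j,B'\setminus\{j\}}$) is the free $\calO_\frakL$-module on the $v$-variables, and $\mathcal{R}$ is the image of the $\rho$-projection. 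Because the image of $\Psi_{A'}$ is the ideal subsheaf $J_{A',k}\calO_\frakL:=\langle t^B:B\in\binom{A'}{k}\rangle\calO_\frakL$ and $t_{i_0}$ is a non-zero-divisor modulo $J_{A',k}\calO_\frakL$ (by a $t_{i_0}$-adic expansion, using that $t_{i_0}$ is independent of $\{t_i\}_{i\in A'}$), the condition on $\rho$ to lie in $\mathcal{R}$ simplifies to $M(\rho)\in J_{A',k}\calO_\frakL$.

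The key identification I will prove is
\[
\mathcal{R}\;\cong\;\bigoplus_{B'\in\binom{A'}{k-1}}\mathcal{I}_{V_{B'}}\calO_\frakL,
\qquad V_{B'}:=\{t_i=0:i\in A'\setminus B'\},
\]
with $\mathcal{I}_{V_{B'}}$ its ideal sheaf. The inclusion ``$\supseteq$'' is immediate, and for ``$\subseteq$'' I restrict $M(\rho)\in J_{A',k}\calO_\frakL$ to each $V_{B'}$: every other term of $M(\rho)$ carries a factor $t_i$ with $i\in A'\setminus B'$, the right hand side vanishes on $V_{B'}$ because every $B\in\binom{A'}{k}$ meets $A'\setminus B'$, and since $t^{B'}$ is a non-zero-divisor on $V_{B'}$ this forces $\rho_{B'}|_{V_{B'}}=0$. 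For each $B'$, iterating $0\to\calO_\frakL\xrightarrow{t_i}\calO_\frakL\to\calO_\frakL/t_i\calO_\frakL\to 0$ over $i\in A'\setminus B'$ yields $H^{\ge 1}(U,\calO_\frakL|_{V_{B'}})=0$ together with the surjectivity $H^0(U,\calO_\frakL)\twoheadrightarrow H^0(U,\calO_\frakL|_{V_{B'}})$, and then the long exact sequence of $0\to\mathcal{I}_{V_{B'}}\calO_\frakL\to\calO_\frakL\to\calO_\frakL|_{V_{B'}}\to 0$ forces $H^{\ge 1}(U,\mathcal{I}_{V_{B'}}\calO_\frakL)=0$. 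Combined with $H^{\ge 1}(U,\mathcal{F})=0$ (free) and the inductive hypothesis $H^{\ge 1}(U,\calS_{A',k})=0$ (with the convention $\calS_{A',k}=0$ when $k>|A'|$, which handles the edge case $k=|A|$), the long exact sequences of the two displayed short exact sequences give $H^{\ge 1}(U,\mathcal{M})=0$ and hence $H^{\ge 1}(U,\calS_{A,k})=0$. The hardest part will be verifying the identification of $\mathcal{R}$ cleanly at the sheaf level, together with the bookkeeping for the degenerate cases $k=1$ (where $\mathcal{F}=0$ and $\calS_{A,k}=\mathcal{M}$) and $k=|A|$ (where type~(III) and $\calS_{A',k}$ are absent).
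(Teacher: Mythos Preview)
Your argument is correct and takes a genuinely different route from the paper's.

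The paper proves Theorem~\ref{VAk} jointly with Theorems~\ref{HA} and~\ref{DAk} by a mutual induction on $|A|$: assuming \ref{HA} for all $A'\subsetneq A$ one gets \ref{DAk} for $A$ (Lemma~\ref{DAk-induction}), then \ref{VAk} for $A$ via an explicit \v{C}ech-cocycle computation using decreasing induction on $k$ (Lemma~\ref{VAk-induction} and Corollary~\ref{DAk-implies-VAk}), and finally \ref{HA} for $A$ (Lemma~\ref{HA-induction}). Your approach instead proves \ref{VAk} in isolation, by a single induction on $|A|$ with no auxiliary induction on $k$: removing one index $i_0$ and splitting the data $(u,v,w)$ yields the two short exact sequences $0\to\mathcal{F}\to\calS_{A,k}\to\mathcal{M}\to 0$ and $0\to\calS_{A',k}\to\mathcal{M}\to\mathcal{R}\to 0$, and the long exact sequences in cohomology reduce everything to the vanishing for a free sheaf, for $\calS_{A',k}$ (inductive hypothesis), and for $\bigoplus_{B'}\mathcal{I}_{V_{B'}}\calO_\frakL$, the last handled by iterating the sequences $0\to\calO_\frakL\xrightarrow{t_i}\calO_\frakL\to\calO_\frakL|_{\{t_i=0\}}\to 0$.

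What each buys: your argument is shorter, more conceptual, and decouples \ref{VAk} from \ref{HA} and \ref{DAk}; in particular it never touches $k$ inductively. The price is reliance on the long exact sequence in sheaf cohomology (hence on the identification of \v{C}ech with sheaf cohomology for the coherent sheaves involved, via Stein covers), whereas the paper's proof is a bare-hands \v{C}ech argument. Note also that your step showing $H^{\geq 1}(U,\mathcal{I}_{V_{B'}}\calO_\frakL)=0$ together with the surjectivity $H^0(U,\calO_\frakL)\twoheadrightarrow H^0(U,\calO_\frakL|_{V_{B'}})$ is essentially an independent proof of Theorem~\ref{HA} for the set $A'\setminus B'$; so the Hadamard lemma is still present, but packaged as a cohomology vanishing rather than invoked by name. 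A minor simplification: you do not actually need the claim that $t_{i_0}$ is a non-zero-divisor modulo $J_{A',k}\calO_\frakL$; restricting $t_{i_0}M(\rho)\in J_{A',k}\calO_\frakL$ directly to $V_{B'}$ already gives $t_{i_0}t^{B'}\rho_{B'}|_{V_{B'}}=0$, and $t_{i_0}t^{B'}$ is a non-zero-divisor on $V_{B'}$.
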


\begin{Theorem}\labell{DAk}
Fix a subset $A \subseteq \{ 1,\ldots,r \}$
and an integer $k$ such that $1 \leq k \leq |A|$.
Then, for any open subset $U \subseteq \frakX$
such that $H^{\geq 1}(U;\calO_{\frakL}) = \{ 0 \}$,
and for any
collection of sections $s_{j,B'} \in \Gamma_\hol(U,\frakL)$,
indexed by $j \in A$ and $B' \in \binom{A \ssminus \{ j \}}{k-1}$,
such that $(s_{j,B'}) \in \calS_{A,k}(U)$,
there exists a collection of sections $s_{i,B} \in \Gamma_\hol(U,\frakL)$,
indexed by $i \in A$ and $B \in \binom{ A \ssminus \{ i \} }{k}$,
such that, for every $B \in \binom{A}{k}$,
\begin{equation} \labell{star55}
 \sum_{j \in B} s_{ j , B \ssminus \{ j \} } 
 = \sum_{i \in A \ssminus B} t_i s_{i,B}. 
\end{equation}
\end{Theorem}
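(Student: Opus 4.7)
The plan is to recast Theorem~\ref{DAk} as the surjectivity on global sections of a sheaf-theoretic boundary map, and to reduce that surjectivity to Theorem~\ref{VAk} via a short exact sequence of sheaves. First I would observe that both the hypothesis $(s_{j,B'}) \in \calS_{A,k}(U)$ and the conclusion~\eqref{star55} depend only on the combinations $\sigma_B := \sum_{j \in B} s_{j, B \ssminus \{j\}}$ for $B \in \binom{A}{k}$, rather than on the individual $s_{j,B'}$. I introduce the auxiliary subsheaf
$$ \calQ_{A,k} := \Bigl\{\, (\sigma_B)_B \in \textstyle{\bigoplus_{B \in \binom{A}{k}}} \calO_\frakL \,\Bigm|\, \textstyle{\sum_B} \bigl(\textstyle{\prod_{\ell \in B}} t_\ell\bigr)\, \sigma_B = 0 \,\Bigr\} $$
of $\bigoplus_B \calO_\frakL$, so the hypothesis becomes $(\sigma_B) \in \calQ_{A,k}(U)$, and define the sheaf map
$$ \partial \colon \calS_{A,k+1} \longrightarrow \calQ_{A,k}, \qquad (s_{i,B})_{i,B} \longmapsto \Bigl(\textstyle{\sum_{i \in A \ssminus B}} t_i \, s_{i,B}\Bigr)_B. $$
A direct reindexing with $C = B \sqcup \{i\}$ shows $\partial$ is well-defined: the defining relation of $\calS_{A,k+1}$ (summed over $C \in \binom{A}{k+1}$) rewrites as the defining relation of $\calQ_{A,k}$ applied to $\partial$'s image. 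In this language, Theorem~\ref{DAk} is exactly the statement that $\partial$ is surjective on sections over $U$.

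The key step --- and the main obstacle I expect --- is verifying that $\partial$ is a surjection of sheaves, via a stalk-level Koszul-type argument. At any $x \in \frakX$, the submersion $\pi$ supplies local holomorphic coordinates in which $t_1, \ldots, t_r$ are among the coordinates, so $(t_i)_{i \in A}$ is a regular sequence in the local ring $\calO_{\frakX,x}$, and any reordering remains a regular sequence. Given a germ $(\sigma_B)$ of $\calQ_{A,k}$ and a fixed $B_0 \in \binom{A}{k}$, isolating the $B_0$ term in $\sum_B (\prod t_\ell)\sigma_B = 0$ forces $\bigl(\prod_{\ell \in B_0} t_\ell\bigr)\sigma_{B_0}$ to lie in $\langle t_i : i \in A \ssminus B_0 \rangle$; using that each $t_\ell$ for $\ell \in B_0$ is a non-zero-divisor modulo that ideal (by permutation-invariance of regularity, applied in a local trivialization of $\frakL$), we peel off those factors to conclude $\sigma_{B_0} \in \langle t_i : i \in A \ssminus B_0 \rangle$, yielding a local factorization $\sigma_{B_0} = \sum_{i \in A \ssminus B_0} t_i s_{i,B_0}$. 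Doing this for each $B_0$ produces a local preimage under $\partial$.

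Finally I would identify $\ker \partial$ and conclude. By inspection, $(s_{i,B}) \in \ker \partial$ iff $\sum_{i \in A \ssminus B} t_i \, s_{i,B} = 0$ for each $B$ separately, which means $(s_{i,B})_{i \in A \ssminus B} \in \calS_{A \ssminus B, 1}$; any such tuple automatically satisfies the $\calS_{A,k+1}$ relation by the same reindexing that proved $\partial$ well-defined. Thus we have the short exact sequence of sheaves
$$ 0 \longrightarrow \bigoplus_{B \in \binom{A}{k}} \calS_{A \ssminus B, 1} \longrightarrow \calS_{A,k+1} \xrightarrow{\partial} \calQ_{A,k} \longrightarrow 0. $$
Taking the long exact sequence of \v{C}ech cohomology on $U$ and invoking Theorem~\ref{VAk} with subset $A \ssminus B$ and index~$1$ to get $H^1(U, \calS_{A \ssminus B, 1}) = 0$, we conclude that $\partial$ is surjective on sections over $U$. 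The extreme case $k = |A|$ (where $A \ssminus B = \emptyset$ and the right-hand side of~\eqref{star55} is an empty sum) reduces to $\sigma_A = 0$, which follows directly from $\bigl(\prod_{\ell \in A} t_\ell\bigr)\sigma_A = 0$ since $\prod t_\ell$ is a non-zero-divisor.
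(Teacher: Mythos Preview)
Your argument is correct and takes a genuinely different route from the paper's. The paper (Lemma~\ref{DAk-induction}) proves Theorem~\ref{DAk} for $k<|A|$ by directly invoking Theorem~\ref{HA} for each proper subset $A'=A\ssminus B$: the relation $\sum_B (\prod_{\ell\in B} t_\ell)\sigma_B = 0$ forces $\sigma_{B_0}$ to vanish on $U\cap\bigcap_{i\in A\ssminus B_0}\{t_i=0\}$, and the \emph{global} factorization $\sigma_{B_0}=\sum_i t_i s_{i,B_0}$ on all of $U$ then follows at once from the global statement of Theorem~\ref{HA}. You instead do this factorization only stalkwise via a regular-sequence argument, and then globalize through the short exact sequence and the vanishing $H^1(U,\calS_{A\ssminus B,1})=0$ supplied by Theorem~\ref{VAk}. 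Both approaches sit inside the same induction on $|A|$ (the paper proves Theorems~\ref{HA}, \ref{DAk}, \ref{VAk} simultaneously, and your appeal to~\ref{VAk} is only for the proper subsets $A\ssminus B$, which are available by the induction hypothesis), so there is no actual circularity---but you should make that inductive dependence explicit, since as written your proof of~\ref{DAk} cites~\ref{VAk} while the paper's proof of~\ref{VAk} cites~\ref{DAk}. The paper's route is shorter because it offloads the cohomological work to the proof of Theorem~\ref{HA}; your route makes the Koszul structure transparent and produces the exact sequence $0\to\bigoplus_B\calS_{A\ssminus B,1}\to\calS_{A,k+1}\to\mathcal{Q}_{A,k}\to 0$, which the paper never writes down but which matches what the authors allude to in their remark that the theorem ``can be proved using an appropriate Koszul complex of modules.''
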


\begin{Remark} \labell{DAk-remark}
The collection $\left( s_{i,B} \right)$ 
that is obtained in Theorem~\ref{DAk}
is necessarily in $\calS_{A,k+1}(U)$.  Indeed,
\[
 \sum\limits_{\wt{B} \in \binom{A}{k+1}}
 \prod\limits_{\ell \in \wt{B}} t_\ell 
 \sum\limits_{j \in \wt{B}}
 s_{j, \, \wt{B} \ssminus \{ j \} }  
 = \sum\limits_{B \in \binom{A}{k}} 
 \prod_{\ell \in B} t_\ell 
 \sum\limits_{j \in A \ssminus B} 
 t_j s_{j,B} \\ 
 = \sum\limits_{ B \in \binom{A}{k} } 
 \prod\limits_{\ell \in B} t_\ell 
 \sum\limits_{j \in B}
 s_{j , B \ssminus \{ j \} } 
 = 0
\]
where the first equality is by setting $B = \wt{B} \ssminus \{ j \}$, 
the second equality is by~\eqref{star55},
and the third equality is because
 $\left( s_{j,B'} \right) \in \calS_{A,k} (U)$.
\end{Remark}

\begin{Lemma}[\textbf{Base cases of Theorems~\ref{DAk} and~\ref{VAk}}]
\labell{DAk-VAk-base}
Fix a subset $A \subseteq \{1,\ldots,r\}$, and let $k=|A|$.  
Then the conclusions of Theorems~\ref{DAk} and~\ref{VAk} are true.
\end{Lemma}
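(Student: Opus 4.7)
The plan is to exploit the combinatorial collapse that occurs when $k = |A|$: the sets $\binom{A \ssminus \{j\}}{k-1}$ and $\binom{A}{k}$ each contain exactly one element, namely $A \ssminus \{j\}$ and $A$ respectively. Writing $s_j := s_{j, A \ssminus \{j\}}$, the defining condition of $\calS_{A,|A|}(V)$ on any open $V \subseteq \frakX$ collapses to the single equation $\bigl(\prod_{\ell \in A} t_\ell\bigr)\bigl(\sum_{j \in A} s_j\bigr) = 0$ in $\Gamma_\hol(V,\frakL)$.

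The only analytic input I need is that $f := \prod_{\ell \in A} t_\ell$ is not a zero divisor on $\calO_\frakL$. Because $\pi = (t_1,\ldots,t_r)$ is a submersion, each $\{t_\ell = 0\}$ is a codimension-one complex submanifold of $\frakX$, so the zero locus of $f$ is nowhere dense. Any holomorphic section of $\frakL$ annihilated by $f$ therefore vanishes on an open dense subset, hence everywhere by continuity. Consequently
\[
\calS_{A,|A|}(V) = \Bigl\{ (s_j)_{j \in A} \in \bigoplus_{j \in A} \Gamma_\hol(V,\frakL) \,\Bigm|\, \sum_{j \in A} s_j = 0 \Bigr\}
\]
for every open $V \subseteq \frakX$.

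Theorem~\ref{DAk} is then immediate in this base case: the family $(s_{i,B})$ to be produced is indexed by $i \in A$ and $B \in \binom{A \ssminus \{i\}}{|A|}$, which is empty since $|A \ssminus \{i\}| = |A| - 1$, so the family is the empty collection; the identity~\eqref{star55} for the unique $B = A \in \binom{A}{|A|}$ reduces to $\sum_{j \in A} s_{j, A \ssminus \{j\}} = 0$, which is exactly what the previous paragraph delivers from the hypothesis $(s_{j, A \ssminus \{j\}}) \in \calS_{A,|A|}(U)$. For Theorem~\ref{VAk}, I would exhibit a sheaf-level splitting of the sum map $\Sigma \colon \bigoplus_{j \in A} \calO_\frakL \to \calO_\frakL$, $(s_j) \mapsto \sum_j s_j$: fixing any $j_0 \in A$, the map $\iota_{j_0}(s) := (s\,\delta_{j,j_0})_{j \in A}$ satisfies $\Sigma \circ \iota_{j_0} = \operatorname{id}$, and $\ker(\Sigma) = \calS_{A,|A|}$ by the previous paragraph. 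Hence $\bigoplus_{j \in A} \calO_\frakL \cong \calS_{A,|A|} \oplus \calO_\frakL$ as sheaves, and additivity of \v{C}ech cohomology under finite direct sums, combined with the hypothesis $H^{\geq 1}(U,\calO_\frakL) = \{0\}$, forces $H^{\geq 1}(U,\calS_{A,|A|}) = \{0\}$. The only nontrivial ingredient is the non-zero-divisor claim; everything else is combinatorial bookkeeping and a trivial splitting.
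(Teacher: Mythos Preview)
Your proof is correct and follows essentially the same approach as the paper. Both arguments first reduce the defining equation of $\calS_{A,|A|}$ to $\sum_{j \in A} s_j = 0$ by noting that $\prod_{\ell \in A} t_\ell$ is a non-zero-divisor, then dispatch Theorem~\ref{DAk} by observing that both sides of~\eqref{star55} are empty sums, and finally handle Theorem~\ref{VAk} by identifying $\calS_{A,|A|}$ as a direct summand of $\bigoplus_{j \in A} \calO_\frakL$ --- the paper via the projection forgetting one coordinate, you via the splitting of the sum map, which are two phrasings of the same isomorphism $\calS_{A,|A|} \cong \bigoplus_{j \in A \ssminus \{j_0\}} \calO_\frakL$.
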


\begin{proof}
We claim that
\begin{equation} \labell{product}
\calS_{A,k} (U) = \left\{ \textstyle{
\left.
(s_{j,A \ssminus \{ j \} }) \in \bigoplus\limits_{j \in A} \Gamma_\hol(U,\frakL)
 \ \right| \ \sum\limits_{j \in A} s_{j,A \ssminus \{ j \}} = 0 } \right\} . 
\end{equation}
Indeed, since $k=|A|$, the sets $B'$ and $B$ that occur
in the definition of $\calS_{A,k}$
are $B' = A \ssminus \{ j \}$ and $B=A$.
So the definition of $\calS_{A,k}(U)$ simplifies to
\[ \textstyle{ \calS_{A,k}(U) = \left\{ \left.
   (s_{j, A \ssminus \{ j \} }) \in 
    \bigoplus\limits_{j \in A} \Gamma_{\hol}(U,\frakL) \ \right| \ 
  \prod\limits_{\ell \in A} t_\ell 
  \sum\limits_{j \in A} s_{j , A \ssminus \{ j \} } = 0 \right\} . } \]
Because the product of the $t_\ell$s is non-zero on an open dense set,
the condition on $(s_{j,A \ssminus \{ j \} })$ further simplifies to 
the condition $\sum\limits_{j \in A} s_{j , A \ssminus \{ j \} } = 0 $,
giving~\eqref{product}.

The right hand side of~\eqref{star55} is (the empty sum, hence)
the zero section.
By~\eqref{product}, the left hand side of~\eqref{star55} 
is also the zero section.  
This proves the conclusion of Theorem~\ref{DAk} in this case.

By~\eqref{product}, and writing $s_j := s_{j , A \ssminus \{ j \}}$
to simplify notation, we have
\[ \calS_{A,k} (U) = \left\{ \left.
\textstyle{ (s_j) \in \bigoplus\limits_{j \in A} \Gamma_\hol(U,\frakL) }
 \ \right| \ \textstyle{ \sum\limits_{j \in A} s_j = 0 } \right\} .\]

Let $i = \min A$.
Then $\calS_{A,k}(\cdot)$ is isomorphic to 
$\bigoplus\limits_{j \in A \ssminus \{ i \} } \Gamma_{\hol}(\cdot,\frakL) $
by the projection
\[ 
 (s_j)_{j \in A} \mapsto (s_{j})_{j \in A \ssminus \{ i \} } .
\] 
Indeed, $s_i$ can be recovered from $(s_j)_{j \in A \ssminus \{ i \} }$
by $s_i = - \sum\limits_{j \in A \ssminus \{ i \} } s_j $.
So for all $d \in \Z_{\geq 0}$ we have 
$H^d(U;\calS_{A,k}) \cong 
 \bigoplus\limits_{j \in A \ssminus \{ i \} } H^d(U;\calO_\frakL)$.
So, if $H^{\geq 1}(U;\calO_\frakL)$ vanishes,
then $H^{\geq 1}(U,\calS_{A,k})$ vanishes too.
This proves the conclusion of Theorem~\ref{VAk} in this case.
\end{proof}

\begin{Lemma}[\textbf{Getting from Theorem \ref{HA} to Theorem~\ref{DAk}}]
\labell{DAk-induction}
Fix a subset $A \subseteq \{1,\ldots,r\}$
and an integer $k$ such that $1 \leq k < |A|$.
Assume that the conclusion of Theorem~\ref{HA} is true 
for all proper subsets $A' \subsetneq A$.
Then the conclusion of Theorem~\ref{DAk} is true 
for the set $A$ and the integer $k$.
\end{Lemma}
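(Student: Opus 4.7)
For each $B \in \binom{A}{k}$, write $\sigma_B := \sum_{j \in B} s_{j, B \ssminus \{ j \}} \in \Gamma_\hol(U,\frakL)$. The identity~\eqref{star55} to be established reads $\sigma_B = \sum_{i \in A \ssminus B} t_i s_{i,B}$. Since $1 \le k < |A|$, the set $A' := A \ssminus B$ is a non-empty proper subset of $A$, so Theorem~\ref{HA} is available for $A'$ by hypothesis. The plan is therefore to produce the sections $s_{i,B}$ by applying Theorem~\ref{HA} to $\sigma_B$ with respect to $A' = A \ssminus B$; this reduces the problem to showing that $\sigma_B$ vanishes on $M_B := U \cap \bigcap_{i \in A \ssminus B} \{ t_i = 0 \}$.

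The key step is to extract this vanishing from the single relation defining $\calS_{A,k}(U)$, namely
\[
 \sum_{B' \in \binom{A}{k}} \prod_{\ell \in B'} t_\ell \cdot \sigma_{B'} = 0.
\]
Fix $B \in \binom{A}{k}$ and restrict to $M_B$. For any $B' \in \binom{A}{k}$ with $B' \neq B$, since $|B'| = |B|$, there exists $\ell \in B' \ssminus B \subseteq A \ssminus B$; hence $t_\ell = 0$ on $M_B$, so the whole term $\prod_{\ell \in B'} t_\ell \cdot \sigma_{B'}$ vanishes on $M_B$. Only the $B' = B$ term survives, yielding $\prod_{\ell \in B} t_\ell \cdot \sigma_B = 0$ on $M_B$. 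Because $\pi = (t_1,\ldots,t_r)$ is a submersion, near any point of $M_B$ the functions $\{t_\ell\}_{\ell \in B}$ can be completed to a local holomorphic coordinate system transverse to $M_B$, so $\{ \prod_{\ell \in B} t_\ell = 0 \}$ cuts out a proper analytic subset of $M_B$. Thus $\sigma_B$ vanishes on a dense open subset of $M_B$, and by continuity $\sigma_B|_{M_B} = 0$.

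With this vanishing in hand, the hypothesis $H^{\geq 1}(U;\calO_\frakL) = \{0\}$ and the Theorem~\ref{HA} conclusion for $A' = A \ssminus B$ yield holomorphic sections $s_{i,B} \in \Gamma_\hol(U,\frakL)$, indexed by $i \in A \ssminus B$, such that $\sigma_B = \sum_{i \in A \ssminus B} t_i s_{i,B}$, which is precisely~\eqref{star55}. As $B$ varies over $\binom{A}{k}$, the pairs $(i,B)$ with $i \in A \ssminus B$ are exactly the pairs with $i \in A$ and $B \in \binom{A \ssminus \{i\}}{k}$, so this produces the required collection $\{s_{i,B}\}$.

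I do not anticipate a genuine obstacle: the only nontrivial ingredient is the submersion argument showing that $\{ \prod_{\ell \in B} t_\ell \neq 0 \}$ is dense in $M_B$, which would be the one place to be careful, but it follows directly from completing the $t_\ell$ to local coordinates. Note also that the cohomology vanishing hypothesis is used only through the statement of Theorem~\ref{HA}; the structural argument above does not need $\calS_{A,k}$-cohomology, which is the content of the separate Theorem~\ref{VAk}.
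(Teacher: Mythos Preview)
Your proof is correct and follows essentially the same route as the paper's: restrict the defining relation of $\calS_{A,k}(U)$ to $M_B = U \cap \bigcap_{i \in A \ssminus B}\{t_i=0\}$, observe that only the $B'=B$ term survives so that $\sigma_B$ vanishes on $M_B$, and then apply Theorem~\ref{HA} with $A' = A \ssminus B$. The paper asserts the vanishing of $\sigma_B$ on $M_B$ in one line, whereas you supply the details (the $B'\neq B$ terms die because some $t_\ell$ with $\ell\in B'\ssminus B$ vanishes, and the submersion hypothesis makes $\{\prod_{\ell\in B}t_\ell\neq 0\}$ dense in $M_B$); this extra justification is welcome and sound.
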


\begin{proof}
Fix an open subset $U \subseteq \frakX$ 
such that $H^{\geq 1}(U;\calO_\frakL) = \{ 0 \}$.
Fix a collection of holomorphic sections,
$ s_{j,B'} \in \Gamma_{\hol} (U,\frakL)$, 
indexed by $j \in A$ and $B' \in \binom{A \ssminus \{ j \} }{k-1}$,
such that $\sum\limits_{ B \in \binom{A}{k} }
 \prod\limits_{\ell \in B} t_\ell 
 \sum\limits_{j \in B} s_{ j , B \ssminus \{ j \} } = 0$.
This equality implies that for any $B \in \binom{A}{k}$
the sum $\sum\limits_{j \in B} s_{j, B \ssminus \{ j \} } $
vanishes on $U \cap \bigcap\limits_{ i \in A \ssminus B } \{ t_i=0 \}$.
By Theorem~\ref{HA} for $A' := A \ssminus B$, 
we get a collection $s_{i,B} \in \Gamma_{\hol} (U,\frakL)$,
indexed by $i \in A \ssminus B$, such that
\[
 \sum\limits_{j \in B} s_{j, B \ssminus \{ j \} }
 = \sum\limits_{ i \in A \ssminus B} t_i s_{i,B},
\]
as required.
This proves the conclusion of Theorem~\ref{DAk} in this case.
\end{proof}

\bigskip 

The following lemma proves the conclusion of Theorem~\ref{VAk} 
for $H^d$ and for $A$ and $k$,
assuming Theorem~\ref{DAk} for $A$ and $k$ and some easier cases
of Theorem~\ref{VAk}.

\begin{Lemma}[\textbf{Getting from Theorem~\ref{DAk}
to an inductive step for Theorem~\ref{VAk}}]
\labell{VAk-induction}
Let $d \in \N$.
Fix a subset $A \subseteq \{1,\ldots,r\}$
and an integer $k$ such that $1 \leq k < |A|$.
Suppose that the conclusion of Theorem~\ref{DAk} is true 
for the set $A$ and the integer $k$.
Fix an open subset $U \subseteq \frakX$.
Assume that $H^d(U;\calO_{\frakL}) = \{ 0 \}$,
that $H^d(U;\calS_{A,k+1}) = \{ 0 \}$,
and that $H^{d+1} (U;\calS_{A',1}) = \{ 0 \}$
for every proper subset $A' \subsetneq A$.
Then $H^d(U;\calS_{A,k}) = \{ 0 \}$.
\end{Lemma}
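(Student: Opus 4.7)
The plan is to construct two short exact sequences of sheaves on $\frakX$ that relate $\calS_{A, k}$, $\calS_{A, k+1}$, and the sheaves $\calS_{A', 1}$ for $A' \subsetneq A$, and then chain their long exact cohomology sequences. I would first introduce auxiliary sheaves: let $\mathcal{P}_k := \bigoplus_{B \in \binom{A}{k}} \calO_\frakL$ with the $\calO_\frakL$-linear map $T_k \colon \mathcal{P}_k \to \calO_\frakL$ given by $T_k(x) := \sum_B \prod_{\ell \in B} t_\ell \, x_B$, and let $\mathcal{T}_{A, k} := \bigoplus_{(j, B')} \calO_\frakL$ be the unconstrained ambient sheaf of $\calS_{A, k}$, equipped with the split surjection $\beta \colon \mathcal{T}_{A, k} \to \mathcal{P}_k$ defined by $\beta(s)_B := \sum_{j \in B} s_{j, B \ssminus \{j\}}$. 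By construction $\calS_{A, k} = \beta^{-1}(\ker T_k)$.

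The central step is to define a sheaf map $\alpha \colon \calS_{A, k+1} \to \ker T_k$ by $\alpha(s)_B := \sum_{i \in A \ssminus B} t_i \, s_{i, B}$ for $B \in \binom{A}{k}$. A direct computation, reindexing via $C = B \cup \{i\}$, will show that the defining relation of $\calS_{A, k+1}$ forces $T_k(\alpha(s)) = 0$, so that $\alpha$ indeed lands in $\ker T_k$. For surjectivity of $\alpha$ on stalks I would work on polydisc neighborhoods $V$ trivializing $\frakL$, where $H^{\geq 1}(V; \calO_\frakL) = \{0\}$: given any $(v_B) \in \ker T_k(V)$, placing $v_B$ in the single slot $(\min B, B \ssminus \{\min B\})$ yields an element of $\calS_{A, k}(V)$, and applying the assumed conclusion of Theorem~\ref{DAk} produces a preimage in $\calS_{A, k+1}(V)$. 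The kernel of $\alpha$ is easy to identify: $(s_{i, B}) \in \ker \alpha$ means $\sum_{i \in A \ssminus B} t_i \, s_{i, B} = 0$ for every $B \in \binom{A}{k}$, which is precisely the defining relation of $\calS_{A \ssminus B, 1}$ applied to the ``column'' $(s_{i, B})_{i \in A \ssminus B}$, and summing these relations with weights $\prod_{\ell \in B} t_\ell$ automatically recovers the $\calS_{A, k+1}$ relation. Partitioning indices by $B$ then yields $\ker \alpha \cong \bigoplus_{B \in \binom{A}{k}} \calS_{A \ssminus B, 1}$, and since $k \geq 1$ forces $B \neq \emptyset$, each $A \ssminus B$ is a proper subset of $A$.

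Finally, I would chain two long exact sequences. The one attached to $0 \to \bigoplus_B \calS_{A \ssminus B, 1} \to \calS_{A, k+1} \xrightarrow{\alpha} \ker T_k \to 0$ contains the segment
\[
H^d(U; \calS_{A, k+1}) \to H^d(U; \ker T_k) \to H^{d+1}\bigl(U; \textstyle\bigoplus_B \calS_{A \ssminus B, 1}\bigr),
\]
whose outer terms vanish by hypothesis, so $H^d(U; \ker T_k) = \{0\}$. Restricting the split surjection $\beta$ to $\calS_{A, k}$ then yields a second short exact sequence $0 \to \ker \beta \to \calS_{A, k} \to \ker T_k \to 0$, in which $\ker \beta$ decomposes as $\bigoplus_{B \in \binom{A}{k}} \calO_\frakL^{k-1}$ (and is zero when $k = 1$), so $H^d(U; \ker \beta) = \{0\}$ from $H^d(U; \calO_\frakL) = \{0\}$. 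Combined with $H^d(U; \ker T_k) = \{0\}$, the resulting long exact sequence forces $H^d(U; \calS_{A, k}) = \{0\}$, as desired. The main obstacle will be setting up $\alpha$ correctly --- verifying that it lands in $\ker T_k$, that the assumed Theorem~\ref{DAk} supplies local surjectivity, and that $\ker \alpha$ is exactly the stated direct sum of $\calS_{A', 1}$'s --- after which the cohomological bookkeeping is routine.
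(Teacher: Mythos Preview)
Your argument is correct and arrives at the same conclusion, but it is packaged differently from the paper's proof. The paper carries out an explicit \v{C}ech computation: starting from a $d$-cocycle in $\calS_{A,k}$, it refines the cover, applies Theorem~\ref{DAk} on each intersection to lift into $\calS_{A,k+1}$, takes the \v{C}ech coboundary (which lands, for each $B$, in $\calS_{A\ssminus B,1}$), kills it using $H^{d+1}(U;\calS_{A',1})=0$, corrects to a genuine cocycle in $\calS_{A,k+1}$, kills that using $H^d(U;\calS_{A,k+1})=0$, and finally uses $H^d(U;\calO_\frakL)=0$ together with a ``$j=\min B$'' trick to assemble a primitive in $\calS_{A,k}$. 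Your two short exact sequences
\[
0 \to \bigoplus_{B\in\binom{A}{k}} \calS_{A\ssminus B,1} \to \calS_{A,k+1} \xrightarrow{\alpha} \ker T_k \to 0
\quad\text{and}\quad
0 \to \ker\beta \to \calS_{A,k} \xrightarrow{\beta} \ker T_k \to 0
\]
are a conceptual repackaging of exactly these steps: the paper's local lift via Theorem~\ref{DAk} is your stalk-surjectivity of $\alpha$; the paper's observation that the coboundary satisfies $\sum_{i\in A\ssminus B} t_i\sigma_{i,B}=0$ for each $B$ is your identification of $\ker\alpha$; and the paper's ``$\min B$'' splitting is your section of $\beta$. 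What your approach buys is a clearer view of the structure (the intermediate sheaf $\ker T_k$ and the role of each hypothesis) and fewer index gymnastics; what the paper's explicit chase buys is self-containment with respect to the \v{C}ech framework announced at the start of the appendix. One remark worth adding: your long exact sequences presuppose that \v{C}ech cohomology of sheaves on $U$ has the long-exact-sequence property for short exact sequences of sheaves, which holds because open subsets of complex manifolds are paracompact (so \v{C}ech and derived-functor cohomology agree).
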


\begin{proof}
Take a \v{C}ech $d$-cocycle of $\calS_{A,k}$ over $U$.
It is given by an open covering $\{ U_\alpha \}$ of $U$,
and collections of sections
$$s_{i,B'}^{\alpha_0,\ldots,\alpha_d} 
 \in \Gamma_\hol( U_{\alpha_0} \cap \ldots \cap U_{\alpha_d} , \frakL),$$
indexed by $i \in A$ and $B' \in \binom{A \ssminus \{ i \} }{k-1}$,
such that, when we fix $\alpha_0,\ldots,\alpha_d$ and vary $i$ and $B'$,
\begin{equation} \labell{in SAk}
\left( s_{i,B'} ^{\alpha_0,\ldots,\alpha_d} \right) 
 \in \calS_{A,k} (U_{\alpha_0}\cap\ldots\cap U_{\alpha_d}),
\end{equation}
and when we fix $i$ and $B'$ and vary $\alpha_0,\ldots,\alpha_d$,
\begin{equation} \labell{cocycle}
\delta \left( s_{i,B'} ^{\alpha_0,\ldots,\alpha_d} \right) = 0 ,
\end{equation}
where $\delta$ is the \v{C}ech differential.

After passing to a refinement, we may assume that,
for every $\alpha_0,\ldots,\alpha_d$ in the indexing set
for the covering, 
$H^{\geq 1}(U_{\alpha_0} \cap \ldots \cap U_{\alpha_d}; \calO_\frakL) 
 = \{ 0 \}$.

\medskip

For a moment, fix $\alpha_0,\ldots,\alpha_d$.
By Theorem~\ref{DAk} for the set $A$ and the integer~$k$,
and by~\eqref{in SAk},
there exists a collection of sections
$$s_{i,B}^{\alpha_0,\ldots,\alpha_d} 
 \in \Gamma_\hol( U_{\alpha_0} \cap \ldots \cap U_{\alpha_d} , \frakL),$$
indexed by $i \in A$ and $B \in \binom{A \ssminus \{ i \}}{k}$,
such that, for every $B \in \binom{A}{k}$,
\begin{equation} \labell{choice}
 \sum_{j \in B} s_{ j , B \ssminus \{ j \} }^{\alpha_0,\ldots,\alpha_d}
 = \sum_{i \in A \ssminus B} t_i s_{i,B}^{ \alpha_0,\ldots,\alpha_d } .
\end{equation}
By Remark~\ref{DAk-remark}, 
\[
\left( s_{i,B} ^{\alpha_0,\ldots,\alpha_d} \right)
 \in \calS_{A,k+1}(U_{\alpha_0} \cap\ldots\cap U_{\alpha_d}).
\]

\medskip

Now, we let $\alpha_0,\ldots,\alpha_d$ vary. 
The collection $\left( s_{i,B} ^{\alpha_0,\ldots,\alpha_d} \right)$
gives a $d$-cochain in $\calS_{A,k+1}$, 
which might not be a cocycle.  Let
\begin{equation} \labell{delta}
 (\sigma_{i,B}^{\alpha_0,\ldots,\alpha_{d+1}})
:= \delta \left( ( s_{i,B}^{\alpha_0,\ldots,\alpha_d} ) \right) .
\end{equation}

\medskip

For a moment, fix $B \in \binom{A}{k}$. We have 
\begin{equation} \labell{sum0}
\left(
 \sum_{i \in A \ssminus B} t_i \sigma_{i,B}^{\alpha_0,\ldots,\alpha_{d+1}}
\right)
 = \delta \left( 
   \sum_{i \in A \ssminus B} t_i s_{i,B}^{\alpha_0,\ldots\alpha_d} \right)
 = \sum_{j \in B} \delta \left( 
           s_{j, B \ssminus \{ j \} }^{\alpha_0,\ldots,\alpha_d} \right)
 = 0,
\end{equation}
where the first equality is by~\eqref{delta},
the second equality is by~\eqref{choice},
and the vanishing at the end is by~\eqref{cocycle}.
By~\eqref{sum0}, for each $\alpha_0,\ldots,\alpha_{d+1}$,
the collection of sections $(\sigma_{i,B} ^{\alpha_0,\ldots,\alpha_{d+1}})$,
indexed by $i \in A \ssminus B$, gives an element of 
$\calS_{A \ssminus B, 1}(U_{\alpha_0}\cap\ldots\cap U_{d+1})$.
As $\alpha_0,\ldots,\alpha_{d+1}$ vary,
we get a $(d+1)$-cochain in $\calS_{A \ssminus B , 1}$.
As a consequence of \eqref{delta}, this cochain is a cocycle.
Because $H^{d+1}(U;\calS_{A \ssminus B , 1}) = \{ 0 \}$,
after possibly passing to a refinement of the covering,
there exists a $d$-cochain of $\calS_{A \ssminus B, 1}$, given by sections 
$\sigma_{i,B} ^{\alpha_0,\ldots,\alpha_d} 
 \in \Gamma_\hol(U_{\alpha_0}\cap\ldots\cap U_{\alpha_d} , \frakL)$
indexed by $i \in A \ssminus B$, such that,
when we fix $\alpha_0,\ldots,\alpha_d$ and $B$ and vary $i$,
\begin{equation} \labell{in S A-B 1}
\left( \sigma_{i,B}^{\alpha_0,\ldots,\alpha_d} \right)
 \in \calS_{A \ssminus B, 1} (U_{\alpha_0} \cap \ldots \cap U_{\alpha_d}),
\end{equation}
and when we fix $i$ and $B$ and vary $\alpha_0,\ldots,\alpha_d$,
\begin{equation}\labell{primitive}
\delta \left( \sigma_{i,B} ^{\alpha_0,\ldots,\alpha_d} \right)
 = \left( \sigma_{i,B} ^{\alpha_0,\ldots,\alpha_{d+1}} \right).
\end{equation}
In \eqref{in S A-B 1},
being in $\calS_{A \ssminus B, 1}$ means that

\begin{equation} \labell{vanish}
\sum\limits_{i \in A \ssminus B} t_i \sigma_{i,B} ^{\alpha_0,\ldots,\alpha_d} 
 = 0.
\end{equation}

\medskip

We now let $B \in \binom{A}{k}$ vary.  We claim that the collection
of sections $\sigma_{i,B} ^{\alpha_0,\ldots,\alpha_d}$,
indexed by $i \in A$ and $B \in \binom{A \ssminus \{ i \}}{k}$,
is in $\calS_{A,k+1} (U_0 \cap \ldots \cap U_{\alpha_d})$.
Indeed,
\[
 \sum\limits_{\wt{B} \in \binom{A}{k+1} }
 \prod_{\ell \in \wt{B}} t_\ell 
 \sum\limits_{j \in \tilde{B}} \sigma_{j , \wt{B} \ssminus \{ j \} } 
 ^{\alpha_0,\ldots,\alpha_d}
 = \sum\limits_{B \in \binom{A}{k}} \prod\limits_{\ell \in B} t_\ell
 \sum\limits_{j \in A \ssminus B} t_j \sigma_{j,B} ^{\alpha_0,\ldots,\alpha_d}
 = 0,
\]
where the first equality is obtained by taking $B=\wt{B}\ssminus \{ j \}$,
and the vanishing at the end is by~\eqref{vanish}.

Thus, $\left( s_{i,B} ^{\alpha_0,\ldots,\alpha_d} \right)$
and $\left( \sigma_{i,B} ^{\alpha_0,\ldots,\alpha_d} \right)$
are both $d$-cochains of $\calS_{A,k+1}$.
By \eqref{delta} and \eqref{primitive}, they have the same coboundary.
So
$ \left( s_{i,B} ^{\alpha_0,\ldots,\alpha_d}  
 - \sigma_{i,B} ^{\alpha_0,\ldots,\alpha_d} \right) $
is a $d$-cocycle of $\calS_{A,k+1}$.  
Because $H^d(U;\calS_{A,k+1}) = \{0\}$, there exists
a $(d-1)$-cochain
$ \left( {\wh{s}}_{i,B} ^{\alpha_0,\ldots,\alpha_{d-1}} \right)$ 
of $\calS_{A,k+1}$ such that 
\begin{equation} \labell{d-1}
 \delta \left( {\wh{s}}_{i,B} ^{\alpha_0,\ldots,\alpha_{d-1}} \right)
 = 
   \left( s_{i,B} ^{\alpha_0,\ldots,\alpha_d}  
 - \sigma_{i,B} ^{\alpha_0,\ldots,\alpha_d} \right). 
\end{equation}
For later reference we recall that being in $\calS_{A,k+1}$ means that, 
for every $\alpha_0,\ldots,\alpha_{d-1}$,
\begin{equation} \labell{SA kplus1}
\sum\limits_{\wt{B} \in \binom{A}{k+1}} \prod\limits_{\ell \in \wt{B}} t_\ell
 \sum\limits_{i \in \wt{B}} 
 \wh{s}_{ i , \wt{B} \ssminus \{ i \} } ^{\alpha_0,\ldots,\alpha_{d-1}} = 0.
\end{equation}

For every $B \in \binom{A}{k}$,
\begin{equation} \labell{delta'}
\delta \left( 
 \sum_{i \in A \ssminus B} t_i {\wh{s}}_{i,B} ^{\alpha_0,\ldots,\alpha_{d-1}}
 \right)
 = \left( 
 \sum_{i \in A \ssminus B} t_i s_{i,B} ^{\alpha_0,\ldots,\alpha_{d}}
 -
 \sum_{i \in A \ssminus B} t_i \sigma_{i,B} ^{\alpha_0,\ldots,\alpha_{d}}
   \right)  
 = \sum_{j \in B} s_{ j , B \ssminus \{ j \} } ^{\alpha_0\ldots,\alpha_d}, 
\end{equation}
where the first equality is by~\eqref{d-1}
and the second equality is by~\eqref{choice} and~\eqref{vanish}.

\medskip

By~\eqref{cocycle}, 
for every $B \in \binom{A}{k}$ and $j \in B$, 
the collection 
$\left( s_{j, B \ssminus \{ j \} } ^{\alpha_0,\ldots,\alpha_d} \right)$
is a $d$-cocycle of $\calO_\frakL$.  
Because $H^d( U; \calO_\frakL ) = \{ 0 \}$, there exist
$s_{ j , B \ssminus \{ j \} } ^{\alpha_0,\ldots,\alpha_{d-1}}
 \in \Gamma_\hol (U_{\alpha_0}\cap\ldots\cap U_{\alpha_{d-1}},\frakL)$ 
such that 
\begin{equation} \labell{primitive'}
\delta \left( 
   s_{ j , B \ssminus \{ j \} } ^{\alpha_0,\ldots,\alpha_{d-1}} \right)
 = \left( 
   s_{ j , B \ssminus \{ j \} } ^{\alpha_0,\ldots,\alpha_d} \right) .
\end{equation}

For any $B \in \binom{A}{k}$ and $j \in B$,
if $j \neq \min B$ then we fix an arbitrary $(d-1)$-cochain 
$\left( s_{ j , B \ssminus \{ j \} } ^{\alpha_0,\ldots,\alpha_{d-1}} \right)$
of $\calO_\frakL$
that satisfies~\eqref{primitive'},
and if $j = \min B$ then we take
\begin{equation} \labell{define}
 s_{ j , B \ssminus \{ j \} } ^{\alpha_0,\ldots,\alpha_{d-1}}
 := \sum_{i \in A \ssminus B} 
    t_i {\wh{s}} _{i,B} ^{\alpha_0,\ldots,\alpha_{d-1}}
     - \sum_{\substack{ j' \in B \\ j' \neq \min B}} 
       s_{ j' , B \ssminus \{ j' \}} ^{\alpha_0,\ldots,\alpha_{d-1}} 
\quad \text{ when $j = \min B$}.
\end{equation}
The $(d-1)$-cochain of $\calO_\frakL$ given by~\eqref{define} 
also satisfies~\eqref{primitive'};
this follows from the equation~\eqref{delta'}
and from the equations~\eqref{primitive'} with $j' \neq \min B$.

To complete the proof, we show that the sections 
$\left( s_{j, B \ssminus \{ j \}} ^{\alpha_0,\ldots,\alpha_{d-1}} \right)$
give a $(d-1)$-cochain of $\calS_{A,k}$
whose boundary is the given $d$-cocycle of $\calS_{A,k}$.
Indeed, these sections give a cochain of $\calS_{A,k}$, because
$$ \sum_{ B \in \binom{A}{k} } \prod_{\ell \in B} t_\ell
   \sum_{ j \in B } s_{ j , B \ssminus \{ j \} } ^{\alpha_0,\ldots,\alpha_{d-1}}
 = \sum_{ B \in \binom{A}{k} } \prod_{\ell \in B} t_\ell
   \sum_{i \in A \ssminus B} 
               t_i {\wh{s}}_{i,B} ^{\alpha_0,\ldots,\alpha_{d-1}} $$
$$ = \sum_{ \wt{B} \in \binom{A}{k+1} } \prod_{\ell \in \wt{B}} t_\ell
 \sum_{i \in \wt{B}} 
   {\wh{s}} _{i , \wt{B} \ssminus \{ i \} } ^{\alpha_0,\ldots,\alpha_{d-1}}
 = 0, $$
where the first equality is by~\eqref{define},
the second equality is obtained by setting $\wt{B} = B \sqcup \{ i \}$,
and the last vanishing is by~\eqref{SA kplus1}.
And the coboundary of this $(d-1)$-cochain is the given $d$-cocycle,
by~\eqref{primitive'}.
\end{proof}

\begin{Corollary}[\textbf{Theorem~\ref{DAk} implies Theorem~\ref{VAk}}]
\labell{DAk-implies-VAk}
Fix a subset $A \subseteq \{1, \ldots, r\}$.
Assume that the conclusion of Theorem~\ref{DAk} is true for the set $A$
and for every integer $k$ such that $1 \leq k \leq |A|$,
and assume that the conclusion of Theorem~\ref{VAk} is true 
for every proper subset $A' \subsetneq A$
and every integer $k'$ such that $1 \leq k' \leq |A'|$.
Then the conclusion of Theorem~\ref{VAk} is true 
for the set $A$ and for every integer $k$ such that $1 \leq k \leq |A|$.
\end{Corollary}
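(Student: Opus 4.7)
The plan is to prove the conclusion of Theorem~\ref{VAk} for the set $A$ by downward induction on $k$, running from $k = |A|$ down to $k = 1$, with the three preceding lemmas supplying the base case and the inductive step. Fix throughout an open subset $U \subseteq \frakX$ with $H^{\geq 1}(U;\calO_{\frakL}) = \{0\}$; the goal is to show $H^{\geq 1}(U;\calS_{A,k}) = \{0\}$ for every integer $k$ with $1 \leq k \leq |A|$.

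The base case $k = |A|$ is handled directly by Lemma~\ref{DAk-VAk-base}, which delivers this vanishing without any inductive input.

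For the inductive step, I would fix $1 \leq k < |A|$ and assume, as the downward inductive hypothesis, that $H^{\geq 1}(U;\calS_{A,k+1}) = \{0\}$. For each $d \geq 1$ separately, the strategy is to apply Lemma~\ref{VAk-induction} to conclude $H^d(U;\calS_{A,k}) = \{0\}$. Its four hypotheses are all available: the conclusion of Theorem~\ref{DAk} for the pair $(A,k)$ is assumed in the Corollary; $H^d(U;\calO_{\frakL}) = \{0\}$ follows from the standing hypothesis on $U$ since $d \geq 1$; $H^d(U;\calS_{A,k+1}) = \{0\}$ is the downward inductive hypothesis (applied with $d$ in the range $d \geq 1$); and $H^{d+1}(U;\calS_{A',1}) = \{0\}$ for every proper $A' \subsetneq A$ follows from the Corollary's assumption that Theorem~\ref{VAk} already holds for proper subsets $A'$, applied with $k' = 1 \in \{1,\ldots,|A'|\}$ when $A' \neq \emptyset$. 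The edge case $A' = \emptyset$ causes no trouble, since $\calS_{\emptyset,1}$ is the zero sheaf and its higher cohomology vanishes trivially.

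I do not expect a real obstacle: the substantive work has already been packaged into Lemmas~\ref{DAk-VAk-base} and~\ref{VAk-induction}, and this Corollary is essentially the bookkeeping that stitches them into a single downward induction on $k$. The only thing to monitor is that the cohomological inputs demanded by Lemma~\ref{VAk-induction} are in fact in hand at each level of the induction and for each degree $d$, which the itemization above confirms.
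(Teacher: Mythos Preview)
Your proposal is correct and follows essentially the same approach as the paper's own proof: both argue by downward induction on $k$, using Lemma~\ref{DAk-VAk-base} for the base case $k=|A|$ and verifying the hypotheses of Lemma~\ref{VAk-induction} for the inductive step, exactly as you have itemized. Your extra remark on the edge case $A'=\emptyset$ is harmless, though in fact the proof of Lemma~\ref{VAk-induction} only invokes $\calS_{A\ssminus B,1}$ with $B\in\binom{A}{k}$ and $k<|A|$, so $A\ssminus B$ is always nonempty there.
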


\begin{proof}
Fix an open subset $U \subseteq \frakX$
such that $H^{\geq 1}(U; \calO_\frakL) = \{ 0 \}$.
We would like to show that 
$H^{\geq 1}(U; \calS_{A,k}) = \{ 0 \}$ for all $1 \leq k \leq |A|$.

We argue by decreasing induction on $k$. 
When $k = |A|$, by Lemma~\ref{DAk-VAk-base},
$H^{\geq 1}(U; \calS_{A,k}) {= \{ 0 \}}$.
Now assume that $1 \leq k<|A|$ and that 
$H^{\geq 1}(U; \calS_{A,k+1}) {= \{ 0 \}}$.
We would like to show that $H^{\geq 1}(U; \calS_{A,k}) {= \{ 0 \}}$.

Fix any $d \in \N$.
By assumption, the conclusion of Theorem~\ref{DAk}
is true for the set $A$ and for the integer $k$.
By assumption, $H^{\geq 1}(U;\calO_\frakL) = \{ 0 \}$;
in particular, $H^{d}(U;\calO_\frakL) = \{ 0 \}$.
By the induction hypotheses for $k$, 
we have $H^{\geq 1}(U; \calS_{A,k+1}) {= \{ 0 \}}$;
in particular, $H^{d}(U; \calS_{A,k+1}) {= \{ 0 \}}$.
By assumption, the conclusion of Theorem~\ref{VAk} is true 
for every proper subset $A' \subsetneq A$
and every integer $k'$ such that $1 \leq k' \leq |A'|$;
in particular, $H^{d+1}(U; \calS_{A',1}) = \{ 0 \}$
for every proper subset $A' \subsetneq A$ and $k'=1$.
By Lemma~\ref{VAk-induction}, we conclude 
that $H^{d}(U; \calS_{A,k}) = \{ 0 \}$, as required.
\end{proof}

\begin{Lemma}[\textbf{Base case of Theorem~\ref{HA}}]
\labell{HA-base}
The conclusion of Theorem~\ref{HA} is true when $|A|=1$. Namely: 

Fix $j \in \{1, \ldots, r\}$.
For any open subset $U \subseteq \frakX$
such that $H^{\geq 1} (U,\calO_\frakL) = \{ 0 \}$
and any section $s \in \Gamma_\hol(U,\frakL)$,
if $s$ vanishes on $U \cap \{ t_j=0 \}$,
then there exists a section $s_j \in \Gamma_{\hol}(U,\frakL)$
such that $s=t_js_j$.
\end{Lemma}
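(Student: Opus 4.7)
The plan is to construct $s_j$ by combining the classical Hadamard lemma, applied in local coordinates, with a gluing argument that exploits the injectivity of multiplication by $t_j$ on the sheaf of holomorphic sections. I expect that in this base case the cohomology-vanishing hypothesis $H^{\geq 1}(U,\calO_\frakL) = \{0\}$ is not actually needed.

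First I would cover $U$ by open sets $U_\alpha$ small enough that on each of them: (i) $\frakL$ is holomorphically trivializable, and (ii) there exist holomorphic coordinates $(t_j, w_1, \ldots, w_k)$ extending the function $t_j$ to a local coordinate system on $U_\alpha$. Condition (ii) can be arranged because $\pi$ is a submersion, so $dt_j$ is nowhere vanishing and the implicit function theorem supplies the remaining coordinates $w_1,\ldots,w_k$ along the fibres. Next, on each $U_\alpha$, fix a trivialization so that $s|_{U_\alpha}$ corresponds to a holomorphic function $f^\alpha(t_j,w)$ satisfying $f^\alpha(0,w) = 0$. The classical Hadamard lemma then gives
\[ f^\alpha(t_j, w) \ = \ t_j \int_0^1 \frac{\partial f^\alpha}{\partial t_j}(u t_j, w)\, du \ =: \ t_j \, g^\alpha(t_j, w), \]
with $g^\alpha$ holomorphic. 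Translating back through the trivialization produces a local holomorphic section $s_j^\alpha \in \Gamma_\hol(U_\alpha, \frakL)$ with $s|_{U_\alpha} = t_j s_j^\alpha$.

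It remains to glue the $s_j^\alpha$ into a global section. On each overlap we have $t_j(s_j^\alpha - s_j^\beta) = 0$, and this will force $s_j^\alpha = s_j^\beta$ as follows. Because $\pi$ is a submersion, the zero locus $\{t_j = 0\} \subset \frakX$ is a smooth analytic hypersurface, hence has empty interior, so $\{t_j \neq 0\}$ is open and dense in every open subset of $\frakX$. Since $s_j^\alpha - s_j^\beta$ is holomorphic and vanishes on this dense open set, analytic continuation gives $s_j^\alpha = s_j^\beta$ throughout $U_\alpha \cap U_\beta$. The local sections therefore assemble into a global $s_j \in \Gamma_\hol(U, \frakL)$ with $s = t_j s_j$.

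The only subtlety lies in the gluing step, and what makes it go through without invoking the cohomology hypothesis is precisely that multiplication by $t_j$ is injective on the sheaf $\calO_\frakL$: the \v{C}ech $1$-cocycle $\{s_j^\alpha - s_j^\beta\}$ of potential ambiguities lives in the zero sheaf and vanishes tautologically. This is what distinguishes the case $|A|=1$ from the general case, where the analogous ambiguity sheaf is nontrivial and an honest cohomology vanishing input (supplied by Theorem~\ref{VAk}) is required.
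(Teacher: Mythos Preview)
Your proof is correct and follows essentially the same route as the paper: construct local quotients $s_j^\alpha$ via Taylor/Hadamard in charts adapted to the submersion $\pi$, then glue using the fact that $\{t_j\neq 0\}$ is dense so multiplication by $t_j$ is injective on sections. Your observation that the hypothesis $H^{\geq 1}(U,\calO_\frakL)=\{0\}$ is not actually used in this base case is also correct and matches the paper's argument.
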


\begin{proof}
By taking local Taylor expansions, 
there exists an open covering $\{ U_\alpha \}$ of $U$
and, for each $\alpha$, 
a section $s_j^{\alpha} \in \Gamma_\hol(U_\alpha,\frakL)$,
such that ${s|}_{U_\alpha} = t_j s_j^{\alpha}$.
Every two sections $s_j^\alpha$ and $s_j^{\alpha'}$
coincide on $U_\alpha \cap U_{\alpha'} \cap \{ t_j \neq 0\}$
(because they are both equal there to $\frac{1}{t_j} s$);
because $\{ t_j \neq 0 \}$ is open and dense,
they coincide on the entire overlap $U_\alpha \cap U_{\alpha'}$.
So the sections $s_j^\alpha$
fit together into a section $s_j \in \Gamma_\hol(U,\frakL)$,
which has the required property.
\end{proof}

\begin{Lemma}[\textbf{Inductive step for Theorem~\ref{HA}}]
\labell{HA-induction}
Fix a subset $A \subseteq \{1,\ldots,r\}$.
Assume that  the conclusion of Theorem~\ref{VAk} for the first cohomology $H^1$ is true for    the set $A$ and the integer $k=1$, namely,  for any open subset $U\subseteq \frakX$, if $H^{\ge 1}(U; \calO_{\frakL})=\{0\}$, then  $H^1(U; \calS_{A, 1})=\{0\}$.
Then the conclusion of Theorem~\ref{HA} is true for the set~$A$.
\end{Lemma}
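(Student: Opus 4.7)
The plan is to combine local existence (via the classical Hadamard lemma in holomorphic coordinates) with a one-step \v{C}ech cohomology argument using the vanishing hypothesis on $\calS_{A,1}$.

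First, I would establish local existence. Fix any point $x \in U$. Because $\pi = (t_1,\ldots,t_r) \colon \frakX \to \C^r$ is a holomorphic submersion and $\frakL$ is a holomorphic line bundle, I can choose an open neighbourhood $U_\alpha$ of $x$ in $U$ which is biholomorphic to a polydisk with coordinates of the form $(t_1,\ldots,t_r,w_1,\ldots,w_m)$ and on which $\frakL$ is trivialized. On such a neighbourhood, a section of $\frakL$ is identified with a holomorphic function, and the classical Hadamard lemma applied in the variables $\{t_i\}_{i \in A}$ says that if $s$ vanishes on the slice $\{t_i = 0 \text{ for all } i \in A\}$, then there exist holomorphic sections $s_i^\alpha \in \Gamma_\hol(U_\alpha,\frakL)$, indexed by $i \in A$, such that $s|_{U_\alpha} = \sum_{i \in A} t_i s_i^\alpha$. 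Cover $U$ by such $U_\alpha$.

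Next, I would assemble the \v{C}ech cocycle. On any overlap $U_{\alpha\beta} := U_\alpha \cap U_\beta$, the two local decompositions of $s$ agree, so
\[
\sum_{i \in A} t_i \bigl( s_i^\alpha - s_i^\beta \bigr) = 0 \quad \text{on } U_{\alpha\beta},
\]
which means the tuple $\left( s_i^\alpha - s_i^\beta \right)_{i \in A}$ lies in $\calS_{A,1}(U_{\alpha\beta})$. As $(\alpha,\beta)$ varies, these tuples form a \v{C}ech $1$-cocycle $c \in \check Z^1(\{U_\alpha\},\calS_{A,1})$; the cocycle identity is immediate from the cancellation in $s_i^\alpha - s_i^\beta + s_i^\beta - s_i^\gamma = s_i^\alpha - s_i^\gamma$.

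Now I invoke the hypothesis. By assumption, $H^1(U;\calS_{A,1}) = \{0\}$, so after passing to a refinement of $\{U_\alpha\}$ (on which the local Hadamard decompositions persist by restriction), the cocycle $c$ is a coboundary: there exist tuples $(\tau_i^\alpha)_{i \in A} \in \calS_{A,1}(U_\alpha)$ such that
\[
s_i^\alpha - s_i^\beta = \tau_i^\beta - \tau_i^\alpha \quad \text{on } U_{\alpha\beta}, \qquad i \in A.
\]
Equivalently, $s_i^\alpha + \tau_i^\alpha = s_i^\beta + \tau_i^\beta$ on overlaps, so for each $i \in A$ the local sections glue into a global section $s_i \in \Gamma_\hol(U,\frakL)$. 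Because $(\tau_i^\alpha) \in \calS_{A,1}(U_\alpha)$ means $\sum_{i \in A} t_i \tau_i^\alpha = 0$, we get on each $U_\alpha$
\[
\sum_{i \in A} t_i s_i = \sum_{i \in A} t_i ( s_i^\alpha + \tau_i^\alpha ) = \sum_{i \in A} t_i s_i^\alpha = s,
\]
so the identity $s = \sum_{i \in A} t_i s_i$ holds globally on $U$, as required.

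The only potentially delicate point is the refinement step: one must check that the local Hadamard decompositions may be assumed to be defined on a \v{C}ech covering that is fine enough to compute $H^1(U;\calS_{A,1})$. This is routine, since any refinement of a covering of $U$ by coordinate polydisks trivializing $\frakL$ is again of the same type, and restriction of holomorphic sections preserves the Hadamard decomposition. The substantive input is the cohomological hypothesis, which is precisely what permits us to pass from local primitives to a global primitive.
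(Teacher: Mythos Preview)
Your proof is correct and follows essentially the same approach as the paper's: local Taylor/Hadamard decompositions on a coordinate cover, assembly of the differences into a \v{C}ech $1$-cocycle of $\calS_{A,1}$, and use of the vanishing hypothesis to trivialize this cocycle and glue corrected local primitives into global ones. The only cosmetic difference is a sign convention in the coboundary (the paper subtracts its $\sigma^\alpha_i$ where you add your $\tau^\alpha_i$), which is immaterial.
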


\begin{proof}
Fix an open subset $U$ of $\frakX$
such that $H^{\geq 1}(U;\calO_{\frakL}) = \{ 0 \} $.
Fix a holomorphic section $s \in \Gamma_\hol(U,\frakL)$
that vanishes on $U \cap \bigcap\limits_{i \in A} \{ t_i = 0\}$.
By Taylor expansions, there exists an open covering $\{ U_\alpha \}$ of $U$
and, for each $\alpha$,
sections $s^{\alpha}_i \in \Gamma_\hol(U_\alpha)$,
indexed by $i \in A$,
such that 
$$ {s|}_{U_\alpha} = \sum\limits_{i \in A} t_i s^{\alpha}_i.$$ 
For every $\alpha$ and $\alpha'$, consider
\begin{equation} \labell{sigma alpha alpha'}
 \sigma^{\alpha,\alpha'} _i :=
   s^{\alpha}_i - s^{\alpha'}_i
 \in \Gamma_\hol \left( U_\alpha \cap U_{\alpha'} , \frakL \right).
\end{equation}
Then
\[
    \sum\limits_{i \in A} t_i \sigma^{\alpha,\alpha'}_i = 0.
\]
This condition exactly means that the collection of sections
$\left( \sigma^{\alpha,\alpha'}_i \right)$,
indexed by $i \in A$,
is in $\calS_{A,1}(U_\alpha \cap U_{\alpha'})$.
Moreover, as a consequence of~\eqref{sigma alpha alpha'},
$\left( \sigma^{\alpha,\alpha'}_{i} \right)$
is a 1-cocycle for the sheaf $\calS_{A,1}$.
By the conclusion of  Theorem~\ref{VAk} for  the first cohomology~$H^1$ with the set $A$ and the integer $k=1$, 
after possibly passing to a refinement of our covering,
we get $\sigma^{\alpha}_i \in \Gamma_\hol(U_\alpha)$,
indexed by $i \in A$,
such that $\sum\limits_{i \in A} t_i \sigma^\alpha_i = 0$,
and such that 
$\sigma^{\alpha,\alpha'}_i = \sigma^\alpha_i - \sigma^{\alpha'}_i$.
Then $( s^{\alpha}_i - \sigma^{\alpha}_i )$
agree on overlaps, and 
\[
    {s|}_{U_\alpha} = \sum\limits_{i \in A} 
    t_i \left( s^{\alpha}_i - \sigma^\alpha_i  \right) .
\]
\end{proof}

\bigskip

\begin{proof}[Proof of Theorems~\ref{HA}, \ref{DAk}, and \ref{VAk}.]

Fix a subset $A \subseteq \{1,\ldots,r\}$.   We argue by induction on $|A|$. 

The base case is $k=|A|=1$.
In this case, the conclusion of Theorem~\ref{HA} is true 
by Lemma~\ref{HA-base},
and the conclusions of Theorems~\ref{DAk} and~\ref{VAk} 
are true by Lemma~\ref{DAk-VAk-base}.

Now suppose that the conclusions of Theorems~\ref{HA} and~\ref{VAk} are true 
for every proper subset $A' \subsetneq A$ 
and for every integer $k'$ such that $1 \leq k' \leq |A'|$.

When $k=|A|$, the conclusion of Theorem~\ref{DAk} is true for $A$ and $k$
by Lemma~\ref{DAk-VAk-base}.
When $1 \leq k < |A|$, the conclusion of Theorem~\ref{DAk} 
is true for $A$ and $k$ by Lemma~\ref{DAk-induction}.
Thus, the conclusion of Theorem~\ref{DAk} is true for the set $A$ 
and for every integer $k$ such that $1 \leq k \leq |A|$.

By Corollary~\ref{DAk-implies-VAk}, we conclude that
the conclusion of Theorem~\ref{VAk} for the first cohomology $H^1$, 
and for every integer $k$ such that $1 \leq k \leq |A|$.

In particular, we have shown that the conclusion of Theorem~\ref{VAk} 
for the first cohomology $H^1$
is true for the set $A$ and the integer $k=1$.
By Lemma~\ref{HA-induction}, we obtain 
that the conclusion of Theorem~\ref{HA} is true for the set $A$.
\end{proof}

\section{A ``linear independence lemma''}
\labell{app:indep}

The purpose of this appendix is to prove Lemma~\ref{one to one},
which we use in the proof of Proposition~\ref{p:one-to-one}.

Fix a natural number $r$,
a complex manifold $\frakX$, a submersion 
$$\pi = (t_1,\ldots,t_r) \colon \frakX \to \C^r,$$
and a holomorphic line bundle 
$$\frakL \to \frakX.$$
Denote $\frakX_\reg = \pi^{-1}( (\Cx)^r )$
and $X_\bfzero = \pi^{-1}(\{ 0 \})$.

\begin{Lemma}\labell{one to one}
Let $\vell^{(1)},\ldots,\vell^{(m)}$ be distinct elements of $\Z^r$.
Let $\sigma_1,\ldots,\sigma_m \in \Gamma_{\hol}(\frakX,\frakL)$.
Suppose that
$$ \bft^{\vell^{(1)}} \sigma_1 + \ldots + \bft^{\vell^{(m)}} \sigma_m = 0
\ \text{ on } \ \frakX_\reg. $$
Then there exists $j \in \{ 1,\ldots,m\}$ 
such that $\sigma_j$ vanishes on $X_\bfzero$.
\end{Lemma}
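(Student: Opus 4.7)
The plan is to induct on the number of terms $m$, treating the ambient data $(r,\frakX,\pi,\frakL)$ as parameters of the statement. For the base case $m=1$, the relation $\bft^{\vell^{(1)}}\sigma_1=0$ on $\frakX_\reg$ combined with the non-vanishing of $\bft^{\vell^{(1)}}$ there forces $\sigma_1=0$ on $\frakX_\reg$; since $\frakX_\reg$ is open and dense in $\frakX$ (its complement is the preimage under the submersion $\pi$ of a finite union of coordinate hyperplanes in $\C^r$, hence has codimension at least one), continuity of $\sigma_1$ gives $\sigma_1=0$ on all of $\frakX$, and in particular on $X_\bfzero$.

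For the inductive step ($m\geq 2$), I would first clear denominators: pick $\vec N\in\Z^r$ with $\vec N+\vell^{(i)}\in\Z_{\geq 0}^r$ for every $i$, multiply the relation by $\bft^{\vec N}$ on $\frakX_\reg$, and use density of $\frakX_\reg$ to conclude that the shifted identity $\sum_i \bft^{\mu^{(i)}}\sigma_i=0$, with $\mu^{(i)}:=\vec N+\vell^{(i)}$, actually holds on all of $\frakX$ (each term is now a genuine holomorphic section). The $\mu^{(i)}$ are still distinct, so there is a coordinate $k$ along which they do not all coincide; after replacing $\vec N$ by $\vec N-(\min_i\mu^{(i)}_k)\,\vec{e}_k$ and running the density argument a second time, I may further assume that $\mu^{(i)}_k=0$ for $i$ in a non-empty proper subset $I_0\subsetneq\{1,\ldots,m\}$, and $\mu^{(i)}_k>0$ for $i\notin I_0$, with the resulting identity still holding on all of $\frakX$.

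Next I would restrict the identity to the complex submanifold $\frakY:=\pi^{-1}(\{t_k=0\})\subset\frakX$, which is genuinely a submanifold because $\pi$ is a submersion. Every term with $i\notin I_0$ vanishes on $\frakY$ since $t_k\mid\bft^{\mu^{(i)}}$, leaving
$$ \sum_{i\in I_0} \hat\bft^{\hat\mu^{(i)}}\,(\sigma_i|_\frakY)=0, $$
where $\hat\pi\colon\frakY\to\C^{r-1}$ is $\pi$ with its $k$-th coordinate dropped and $\hat\mu^{(i)}\in\Z^{r-1}$ is $\mu^{(i)}$ with its (now vanishing) $k$-th entry removed. The $\hat\mu^{(i)}$ for $i\in I_0$ remain pairwise distinct, since the $\mu^{(i)}$ are distinct and agree on the removed coordinate for $i\in I_0$. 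The triple $(\frakY,\hat\pi,\frakL|_\frakY)$ is another instance of the ambient data of the lemma, with the \emph{same} special fibre $\hat\pi^{-1}(0)=X_\bfzero$. Since $|I_0|<m$, the inductive hypothesis produces $j\in I_0$ such that $\sigma_j|_\frakY$ vanishes on $X_\bfzero$; equivalently, $\sigma_j$ vanishes on $X_\bfzero$, as required.

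The main obstacle I foresee will be pure bookkeeping: verifying that distinctness of exponents survives each shift, that restricting to $\frakY$ really does deliver a smaller instance of the lemma with the same special fibre, and that the density arguments used to pass between $\frakX_\reg$ and $\frakX$ (and analogously on $\frakY$) are justified. No serious analytic difficulty should arise beyond these elementary continuity considerations.
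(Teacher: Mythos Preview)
Your proof is correct and follows the same core strategy as the paper's: find a coordinate along which the exponents are not all equal, normalize so that the minimum in that coordinate is zero, set that coordinate to zero, and apply induction to the strictly fewer surviving terms. The only difference is packaging. The paper keeps the ambient manifold $\frakX$ fixed throughout and instead formulates a mildly more general auxiliary statement (their Lemma~\ref{one to one A}) carrying an extra parameter $A\subseteq\{1,\ldots,r\}$ that records which coordinates are still ``active''; the induction then runs over pairs $(A',m')$ with $A'\subsetneq A$ and $m'<m$, and ``setting $t_k=0$'' is implemented by passing from $A$ to $A\setminus\{k\}$. You achieve the same effect by physically restricting to the submanifold $\frakY=\{t_k=0\}$, thereby replacing $r$ by $r-1$, and induct on $m$ alone with the ambient data treated as a free parameter. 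The two formulations are interchangeable, and your version is arguably a bit more streamlined since it avoids introducing the auxiliary subset~$A$.
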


Lemma~\ref{one to one} is the special case
of the following more general Lemma
when the set $A$ is all of $\{ 1, \ldots, r \}$:

\begin{Lemma}\labell{one to one A}
Let $A \subseteq \{1 , \ldots, r \}$ and let $m \in \N$.
Let $\vell^{(1)},\ldots,\vell^{(m)}$ be distinct elements of~$\Z^A$.
Let $\sigma_1,\ldots,\sigma_m \in \Gamma_{\hol}(\frakX,\frakL)$.
Suppose that 
$$ \bft^{\vell^{(1)}} \sigma_1 + \ldots + \bft^{\vell^{(m)}} \sigma_m 
\ \text{ vanishes on } \ 
  \left( \cap_{i \in A} \{ t_i \neq 0 \} \right) \cap 
  \left( \cap_{i \not\in A} \{ t_i = 0\} \right). $$
Then there exists $j \in \{ 1,\ldots,m \}$ 
such that $\sigma_j$ vanishes on $X_\bfzero$.
\end{Lemma}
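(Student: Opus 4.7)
My plan is to prove Lemma~\ref{one to one A} by induction on $|A|$. The base case is $|A|=0$: then $\Z^A$ has a single element (the empty tuple), so $m=1$ and $\bft^{\vell^{(1)}}\sigma_1 = \sigma_1$; the hypothesis directly states that $\sigma_1$ vanishes on $\bigcap_{i=1}^r\{t_i=0\}=X_\bfzero$, and we are done.

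For the inductive step, suppose $|A|\geq 1$ and choose any $i_0\in A$. Set $m_0:=\min_k \ell^{(k)}_{i_0}$, and split each exponent as $\vell^{(k)}=(\ell^{(k)}_{i_0},\wh{\vell}^{(k)})$ with $\wh{\vell}^{(k)}\in\Z^{A\ssminus\{i_0\}}$. Multiplying the given identity on $R:=(\bigcap_{i\in A}\{t_i\neq 0\})\cap(\bigcap_{i\notin A}\{t_i=0\})$ by $t_{i_0}^{-m_0}$ yields
$$\sum_k t_{i_0}^{\ell^{(k)}_{i_0}-m_0}\,\bft^{\wh{\vell}^{(k)}}\sigma_k = 0 \quad \text{on } R.$$
All exponents of $t_{i_0}$ are now non-negative integers, so the left-hand side extends holomorphically to the larger open subset $R':=(\bigcap_{i\in A\ssminus\{i_0\}}\{t_i\neq 0\})\cap(\bigcap_{i\notin A}\{t_i=0\})$ of the complex submanifold $\bigcap_{i\notin A}\{t_i=0\}$. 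Since $R$ is open and dense in $R'$, the identity persists on $R'$ by analytic continuation. Specializing to $\{t_{i_0}=0\}\cap R'$ kills every term whose $t_{i_0}$-exponent is positive and leaves
$$\sum_{\substack{k \\ \ell^{(k)}_{i_0}=m_0}} \bft^{\wh{\vell}^{(k)}}\,\sigma_k\big|_{\{t_{i_0}=0\}} = 0.$$

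This is exactly the hypothesis of the lemma applied to a lower-dimensional setup: the ambient complex manifold $\frakX\cap\{t_{i_0}=0\}$ with its induced submersion to $\C^{r-1}$, the restricted line bundle, the subset $A':=A\ssminus\{i_0\}$, the restricted sections $\sigma_k|_{\{t_{i_0}=0\}}$, and the exponent vectors $\wh{\vell}^{(k)}$, which remain pairwise distinct because the surviving original $\vell^{(k)}$ all shared the same $i_0$-component $m_0$. The inductive hypothesis then yields some $j$ with $\ell^{(j)}_{i_0}=m_0$ for which $\sigma_j|_{\{t_{i_0}=0\}}$ vanishes on $X_\bfzero\cap\{t_{i_0}=0\}=X_\bfzero$; in particular $\sigma_j$ vanishes on $X_\bfzero$, as required.

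The main subtlety I anticipate is the analytic-continuation step: one must confirm via local Laurent expansion that after the $t_{i_0}^{-m_0}$ twist the combined expression is genuinely holomorphic across $\{t_{i_0}=0\}$ inside $\bigcap_{i\notin A}\{t_i=0\}$ (not merely meromorphic), and that $R$ really is dense in $R'$ locally near any point of $\{t_{i_0}=0\}\cap R'$. Once these local assertions are in hand, the reduction $|A|\to|A|-1$ is purely formal.
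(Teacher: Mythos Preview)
Your proof is correct and follows essentially the same reduction as the paper's: isolate a coordinate $i_0\in A$, factor out the minimal power $t_{i_0}^{m_0}$, extend the resulting identity across $\{t_{i_0}=0\}$ by continuity, and apply the inductive hypothesis with $A'=A\ssminus\{i_0\}$. The only organisational difference is that the paper keeps $\frakX$ fixed and inducts on the pair $(|A|,m)$ --- which forces it to choose $i_0$ so that the $i_0$-components of the $\vell^{(k)}$ are not all equal, ensuring strictly fewer surviving terms --- whereas you pass to the submanifold $\frakX\cap\{t_{i_0}=0\}$ and induct on $|A|$ alone, so any $i_0\in A$ works and no separate $m=1$ base case is needed.
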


\begin{Remark}
In the setup of Lemma~\ref{one to one}, the section
$ \bft^{\vell^{(1)}} \sigma_1 + \ldots + \bft^{\vell^{(m)}} \sigma_m $
is defined on the set $\frakX_\reg$
and might not be defined outside it.
In the setup of Lemma~\ref{one to one A}, the section
$ \bft^{\vell^{(1)}} \sigma_1 + \ldots + \bft^{\vell^{(m)}} \sigma_m $
is defined on the set $ \cap_{i \in A} \{ t_i \neq 0 \} $
and might not be defined outside it.
\end{Remark}

\begin{proof}[Proof of Lemma~\ref{one to one A} when $A$ is empty
or $m=1$.] 
When $A$ is empty, $\Z^A = \{0\}$, 
so the vectors $\vell^{(j)}$ are all zero,
so $m=1$ (otherwise the vectors $\vell^{(j)}$ would not be distinct).
The assumption is that $\sigma_1$ vanishes on $X_\bfzero$.
But this is the same as what we need to prove.

When $m=1$, we are assuming that $\bft^{\vell^{(1)}} \sigma_1$
vanishes on the intersection 
$ \left( \cap_{i \in A} \{ t_i \neq 0 \} \right) \cap 
  \left( \cap_{i \not\in A} \{ t_i = 0\} \right)$.
Multiplying by $\bft^{-\vell^{(1)}}$,
we obtain that $\sigma_1$ also vanishes on this intersection.
Because $X_\bfzero$ is in the closure of this intersection,
by continuity, $\sigma_1$ vanishes on~$X_\bfzero$.
\end{proof}

\begin{proof}[Proof of Lemma~\ref{one to one A}]
We proceed by induction.
Assume that $A$ is non-empty and that $m$ is greater than $1$,
and assume that Lemma~\ref{one to one A} is true
for all pairs $(A',m')$ 
of a proper subset $A' \subsetneq A$ and a smaller natural number $m' < m$.

After possibly permuting the coordinates, assume that $1 \in A$
and that the first coordinates 
of $\vell^{(1)}, \ldots, \vell^{(m)}$ are not all equal.  
Let $\ell_1$ be the minimum of the values of these first coordinates.  
For each $j$, let $\vell^{(j)}{}'$ be obtained from $\vell^{(j)}$ 
by subtracting $\ell_1$ from the first coordinate.
After possibly reordering the vectors, assume that 
the first coordinate of $\ell^{(j)}{}'$ is zero for $j = 1, \ldots, m'$
and is positive for $j = m'+1, \ldots, m$.
Let 
$$ A' := A \ssminus \{ 1 \}.$$
View $\Z^{A'}$ as the subset of $\Z^A$ where the first coordinate is zero.

The sections $t^{\vell^{(j)}{}'} \sigma_j$
are defined (at least) on $ \cap_{i \in A'} \{ t_i \neq 0 \} $,
because the first coordinates of 
$\vell^{(1)}{}', \ldots, \vell^{(m)}{}'$ are non-negative.
By assumption, 
$t_1^{\ell_1} \left(
 \bft^{\vell^{(1)}{}'} \sigma_1 + \ldots + \bft^{\vell^{(m)}{}'} \sigma_m 
 \right)$
vanishes on the set
$ \left( \cap_{i \in A} \{ t_i \neq 0 \} \right) \cap 
  \left( \cap_{i \not\in A} \{ t_i = 0\} \right)$.
Because $t_1^{\ell_1}$ is nonvanishing on this set, the sum 
$\bft^{\vell^{(1)}{}'} \sigma_1 + \ldots + \bft^{\vell^{(m)}{}'} \sigma_m$
also vanishes on this set.
By continuity, this sum vanishes on 
$ \left( \cap_{i \in A'} \{ t_i \neq 0 \} \right) \cap 
  \left( \cap_{i \not\in A} \{ t_i = 0\} \right)$.

Because the first coordinate of $\vell^{(j)}$ is strictly positive
for $j = m'+1,\ldots,m$,
the corresponding summands $t^{\vell^{(j)}{}'} \sigma_j$ 
vanish whenever $t_1=0$, so they vanishes on 
$ \left( \cap_{i \in A'} \{ t_i \neq 0 \} \right) \cap \{ t_1 = 0\} $.

Because the sum
$ t^{\vell^{(1)}{}'} \sigma_1 + \ldots + t^{\vell^{(m)}{}'} \sigma_m $
vanishes on the set
$ \left( \cap_{i \in A'} \{ t_i \neq 0 \} \right)   \cap
  \left( \cap_{i \not\in A} \{ t_i = 0\} \right)$,
and the partial sum 
$\sum_{j=m'+1}^m t^{\vell^{(j)}{}'} \sigma_j$
vanishes on the set
$ \left( \cap_{i \in A'} \{ t_i \neq 0 \} \right) \cap \{ t_1 = 0\} $,
the remaining partial sum
$\bft^{\vell^{(1)}{}'} \sigma_1 + \ldots + \bft^{\vell^{(m')}{}'} \sigma_{m'}$
vanishes on the intersection of these two sets, which is 
$ \left( \cap_{i \in A'} \{ t_i \neq 0 \} \right)  \cap
  \left( \cap_{i \not\in A'} \{ t_i = 0\} \right)$.

By Lemma~\ref{one to one A} for the proper subset $A' \subsetneq A$
and the smaller integer $m' < m$,
we conclude that at least one of $\sigma_1,\ldots,\sigma_{m'}$
vanishes on $X_\bfzero$.

This completes the proof of Theorem~\ref{one to one A} for the pair $(A,m)$.
\end{proof}


\smallskip\noindent\textbf{Acknowledgements.}
Yael Karshon is deeply grateful to Michael Grossberg
for introducing her to his work with Raoul Bott,
and to Joseph Bernstein for an inspiring conversation,
back in the early/mid 1990s.
More recently, Jihyeon Jessie Yang is grateful to Dave Anderson 
for introducing her to test configurations.
We are grateful for helpful discussions 
with Dave Anderson, Michel Brion, Megumi Harada, Johannes Hofscheier, 
David Kazhdan, Kiumars Kaveh, Allen Knutson, Jiang-Hua Lu.
Jihyeon Jessie Yang is grateful for the hospitality 
and funding by Professor Jiang-Hua Lu at the University of Hong Kong.
This work was partially supported by the
Natural Sciences and Engineering Research Council of Canada
and by the Fields Institute for Research in Mathematical Sciences.



\end{document}